\documentclass[11pt]{amsart}

\usepackage{amsmath}
\usepackage{amssymb}
\usepackage{amsthm}
\usepackage{graphicx} 
\usepackage{url}
\usepackage{hyperref}
\usepackage{multirow}
\usepackage{array}

\textwidth=15cm 
\hoffset=-1.2cm 
\pagestyle{headings}
\setlength{\unitlength}{1mm}

\theoremstyle{plain}
\newtheorem{introtheorem}{Theorem}

\newtheorem{introcorollary}[introtheorem]{Corollary}

\theoremstyle{plain}
\newtheorem{theorem}{Theorem}[section]

\newtheorem{lemma}[theorem]{Lemma}

\theoremstyle{definition}
\newtheorem{definition}[theorem]{Definition}

\newtheorem{example}[theorem]{Example}
\newtheorem{question}[theorem]{Question}
\newtheorem{remark}[theorem]{Remark}


\def\term{\textbf}

\newcommand{\bord}{\partial}

\newcommand{\cro}{\mathrm{cr}}
\newcommand{\cros}{\mathrm{cr}}

\newcommand{\Dpqr}{\Delta_{p,q,r}}

\newcommand{\fgeod}{\varphi_{p,q,r}}

\renewcommand{\ge}{\geqslant}
\newcommand{\Gpqr}{G_{p,q,r}}

\newcommand{\Hopf}{\mathcal{H}^+_3}
\newcommand{\Hy}{\mathbb{H}^2}
\renewcommand{\H}{\mathrm{H}}

\newcommand{\lk}{\mathrm{lk}}
\renewcommand{\le}{\leqslant}

\newcommand{\nombre}[1]{\sharp_{#1}}

\newcommand{\PSLZ}{\mathrm{PSL}_2(\ZZ)}

\newcommand{\Qpqr}{{Q'_{p,q,r}}}
\newcommand{\Qred}{Q_{p,q,r}}
\newcommand{\QQ}{\mathbb{Q}}

\newcommand{\RR}{\mathbb{R}}

\newcommand{\Sph}{\mathbb{S}}
\newcommand{\Spqr}{\Hy/\Gpqr}

\newcommand{\Spn}{\Hy/G_{p_1, \dots, p_n}}

\newcommand{\template}{\mathcal T}
\newcommand{\Tpqr}{\template\!\!_{p,q,r}}

\newcommand{\TT}{\mathbb T}

\newcommand{\U}{\mathrm{T}^1}
\newcommand{\US}{\U\Spqr}

\newcommand{\ZZ}{\mathbb{Z}}


\title{Which geodesic flows are left-handed ?}

\author{Pierre Dehornoy}
\address{Univ. Grenoble Alpes, CNRS, Institut Fourier, F-38000 Grenoble, France
}
\email{pierre.dehornoy@univ-grenoble-alpes.fr}
\urladdr{\url{http://www-fourier.ujf-grenoble.fr/~dehornop/}}
\thanks{Many thanks to \'Etienne Ghys for raising the question addressed in this article and for numerous conversations. This article was written during a one-semester visit at the Laboratoire J.-V.\,Poncelet (CNRS UMI-2615) in Moscow. I thank the laboratoire and the CNRS for their support.}
\keywords{chirality, hyperbolic surface, knot, geodesic flow, Anosov flow, Markov partition, fibered knot, open book decomposition}
\subjclass[2010]{57M27, 37D40 (primary), and 37C27, 37D45, 57M20, 38B10, 37E35 (secondary)} 

\date{December 26th, 2014, corrected October 1st, 2016.}


\begin{document}

\maketitle

\begin{abstract}
We prove that the geodesic flow on the unit tangent bundle to a hyperbolic 2-orbifold is left-handed if and only if the orbifold is a sphere with three cone points. 
As a consequence, on the unit tangent bundle to a 3-conic sphere, the lift of every finite collection of closed geodesics is a fibered link.
\end{abstract}


\section{Introduction}
\label{S:Introduction}

Left-handed flows are a particular class of non-singular 3-dimensional flows on rational homology spheres introduced by \'Etienne Ghys~\cite{GhysLeftHanded}. 
This topological property roughly means that every pair of periodic orbits has negative linking number. 
It implies that every finite collection of periodic orbits bounds a so-called Birkhoff section (\emph{i.e.}, a global section with boundary) for the flow, and therefore forms a fibered link. 
In short, a left-handed flow can be written as an almost-suspension flow in as many ways as one can hope. 
The first examples of left-handed flows are the Seifert flows on~$\Sph^3$ and the Lorenz flow (although the second is not strictly speaking left-handed because of its fixed points). 
This gives an alternative proof that all torus links and all Lorenz links are fibered~\cite[Thm.\,5.2]{BW}. 

A natural question is then to look for other examples of left-handedness and to wonder whether such flows are abundant. 
The geodesic flow on the unit tangent bundle to any 2-dimensional sphere whose curvature is everywhere close to~$1$ is also left-handed~\cite{GhysLeftHanded}. 
This gives infinitely many examples of non-conjugated left-handed flows.
The geodesic flow on the unit tangent bundle to the modular surface~$\Hy/\PSLZ$ gives another example since its periodic orbits are isotopic to periodic orbits of the Lorenz flow~\cite[\S3.5]{Ghys}.
The cases of the almost-round spheres and of the modular surface lead to 

\begin{question}[Ghys]
	\label{Q:Ghys}
	Which geodesic flows are left-handed?
\end{question}

The goal of this article is to give a complete answer in the negatively curved case.
Strictly speaking, the unit tangent bundle to an orientable Riemanian surface is a 3-dimensional homology sphere if and only if the surface is a 2-sphere. 
But geodesic flows are naturally defined on a larger class, namely on 2-dimensional orbifolds, that is, surfaces locally modeled on a Riemannian disc or on the quotient of a disc by a finite rotation group---the so-called \emph{cone points} of the 2-orbifold (we restrict our attention here to orientable 2-orbifolds). 
In this larger class the unit tangent bundle is always a 3-manifold and it is a rational homology sphere if and only if the 2-orbifold is a 2-sphere with a finite number, say~$n$, of cone points---what we now call an \term{n-conic 2-sphere} (see Lemma~\ref{L:QHS} below). 
An $n$-conic 2-sphere admits a negatively curved metric if and only if $n\ge 3$ (in the case $n=3$ the orders of the cone points have to satisfy the additional constraint~$\frac 1 p+\frac 1 q+\frac 1 r<1$ and in the case $n=4$ the quadruple $(2,2,2,2)$ is prohibited). 
Since the geodesic flows associated to different negatively curved metrics are all topologically conjugated~\cite{Gromov}, one can speak of \emph{the} geodesic flow on a hyperbolic $n$-conic 2-sphere. 
Our main result is

\begin{introtheorem}
	\label{T:Main}
	Let $\Sigma$ be a hyperbolic $n$-conic 2-sphere.
	Then for $n=3$ the geodesic flow on~$\U\Sigma$ is left-handed and for $n\ge 4$ the geodesic flow is neither left-handed nor right-handed.
\end{introtheorem}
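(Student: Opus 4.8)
The plan is to split the theorem into two opposite directions—left-handedness for $n=3$ and failure of one-sidedness for $n\ge 4$—and to attack them with different tools. For both, the key invariant is the linking number between pairs of periodic orbits of the geodesic flow, lifted to $\U\Sigma$. Since $\U\Sigma$ is a rational homology sphere (Lemma~\ref{L:QHS}), linking numbers are well-defined rational numbers, and left-handedness means all pairwise linking numbers are negative (right-handedness means all positive). The overall strategy is to reduce the computation of these linking numbers to a combinatorial/topological model of the geodesic flow, so that the sign of the linking can be read off directly.

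Let me describe the $n=3$ case first. The plan is to build an explicit \emph{template} (a branched surface carrying the periodic orbits, in the sense of Birman–Williams) for the geodesic flow on the hyperbolic triangle orbifold $\Spqr$. The paper's notation ($\Tpqr$, $\tTpqr$, $\fgeod$, $\ftemp$) strongly suggests that one first codes geodesics by their cutting sequences through a fundamental domain / ideal triangulation, then shows the suspension of this symbolic dynamics embeds in $\U\Spqr$ as a template. Once the template is in hand, I would compute the linking form on it combinatorially: each periodic orbit is a word, two orbits link according to how their strands cross on the branched surface, and the template's embedding fixes all crossing signs to be negative. The heart is a linking-number estimate showing that for any two distinct periodic orbits the negative crossings strictly dominate, giving negative linking; this mirrors the Lorenz-template argument that the modular case is left-handed, and indeed the $(2,3,\infty)$ modular surface should appear as a degenerate limit.

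For $n\ge 4$, the strategy is the reverse: exhibit two specific periodic orbits whose linking number has the \emph{wrong} sign, or rather, exhibit one pair with positive linking and one pair with negative linking, so the flow can be neither left- nor right-handed. The natural candidates are short geodesics that are "independent," e.g. geodesics concentrated near different pairs of conic points, or a short simple geodesic separating the cone points into two groups. The key idea here is that when there are at least four cone points (or four marked regions), one can find two disjoint simple closed geodesics whose lifts are unlinked or positively linked, violating the uniform negativity; conversely some pair must link negatively by the general hyperbolic geometry. Making this precise means computing or estimating a single linking number and showing its sign, which reduces to a homological/intersection calculation on $\U\Sigma$ using the Gauss-linking or Seifert-surface interpretation.

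I expect the main obstacle to be the $n=3$ direction, specifically proving the \textbf{uniform} negativity of linking numbers over \emph{all} pairs of periodic orbits rather than finitely many. A single miscounted crossing ruins the sign, so the crux is to set up the template embedding in $\U\Spqr$ with enough rigidity that every crossing is forced negative, and then to control the contribution of the "returning" strands (the part of the linking coming from how orbits wind around the singular fibers over the conic points). I anticipate the technical core is a careful analysis of the writhe/self-linking framing induced by the unit-tangent-bundle structure (the $\wri$, $\win$ notation), together with a monotonicity argument showing the linking pairing is negative-definite on the relevant homology. The $n\ge4$ direction, by contrast, should be comparatively cheap: producing one explicit counterexample pair suffices, so the difficulty is only in identifying the right pair and computing its linking sign, not in establishing a uniform bound.
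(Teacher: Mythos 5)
Your overall architecture (a Lorenz-like template with all-negative crossings for $n=3$; a pair of disjoint geodesics with degenerate linking for $n\ge 4$) is the paper's, and your $n\ge 4$ plan is essentially correct: the paper finds two disjoint closed geodesics by an explicit polygon construction (Lemma~\ref{L:NonIntersecting}) and shows the symmetric lift of one bounds the 2-chain of all unit vectors over that geodesic, which the lift of the other never meets, so the linking number is zero --- a single such pair already rules out both left- and right-handedness. The genuine gap is at the crux of the $n=3$ case. You claim that once the template is embedded with only negative crossings, ``the negative crossings strictly dominate, giving negative linking'' for every pair of distinct periodic orbits. This is false as a statement about the template's combinatorics alone. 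The template is drawn in $\Sph^3$, but $\U\Spqr$ is obtained from $\Sph^3$ by surgery on a Hopf link, so the linking number is \emph{not} $-\frac12\cro(\gamma,\gamma')$; it is $-\frac12\cro(\gamma,\gamma')+\frac1\Dpqr\Qred\bigl((\nombre{a}{\gamma},\nombre{b}{\gamma}),(\nombre{a}{\gamma'},\nombre{b}{\gamma'})\bigr)$ (Lemmas~\ref{L:Linking} and~\ref{L:LinkingRed}), and the surgery correction is genuinely positive on some pairs of Lorenz words: for instance $\Dpqr\lk(a^{p-1}b,a^{p-1}b)=pq-p-q>0$, reflecting the fact that $(a^{p-1}b)^r$ is isotopic to a regular fiber and two fibers link positively ($-1/\chi>0$). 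Consequently no argument that ignores \emph{which} words actually occur can succeed: negativity holds only because the kneading sequences (Table~\ref{Table}) make $\Tpqr$ a strict subtemplate of the Lorenz template, excluding precisely such fiber-like words. Relatedly, your proposed key step --- ``a monotonicity argument showing the linking pairing is negative-definite on the relevant homology'' --- cannot exist: linking numbers of disjoint curves depend only on homology classes in complements, and the fiber classes have positive pairing, so the needed input is dynamical/combinatorial (the kneading restriction), not homological.

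Even granting the restriction to admissible words, your plan is missing the mechanism that turns ``uniform negativity over infinitely many pairs'' into something provable. The paper's device is a cutting operation on codes together with superadditivity of the crossing number under cuts (Lemma~\ref{L:Cutting}); since $\Qred$ is additive under concatenation, $\lk$ becomes subadditive, so it suffices to verify negativity on the finitely many \emph{extremal} orbits (those admitting no admissible cut), which are classified explicitly in Lemma~\ref{L:Extremal} as $(a^{p-1}b)^ka^ib^j$, $(ab^{q-1})^ka^ib^j$ and $(a^{p-1}b)^k(ab^{q-1})^l$ with bounded exponents. Even then, plain subadditivity is not sharp enough for the pairs $\bigl((a^{p-1}b)^ka^ib^j,(a^{p-1}b)^{k'}a^{i'}b^{j'}\bigr)$, and the paper needs a refined count (Lemma~\ref{L:Lkijk}, the extra $2\min(k,k')$ crossings) before the case-by-case estimates of Lemmas~\ref{L:EasyCase}--\ref{L:Mixed} close the argument, with a separate treatment of $p=2$ via a covering trick. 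Without the cut/extremal-orbit reduction (or some substitute making the verification finite), your plan stalls exactly at the step you yourself identify as the main obstacle.
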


From the point-of-view of left-handedness, Theorem~\ref{T:Main} contains good and bad news. 
Good news is that it provides infinitely many new examples of left-handed flows on infinitely many different 3-manifolds. 
Bad news is that the answer to Question~\ref{Q:Ghys} is not as simple as one could hope. 
Indeed, a particular case of Theorem~\ref{T:Main} was proven in~\cite{Pierre}, namely that the geodesic flow on a 3-conic 2-sphere with cone points of order~$2,3,4g+2$ is left-handed. 
Also a historical construction of Birkhoff~\cite{Birkhoff} (generalized by Brunella, see~\cite[description 2]{Brunella}) implies that many collections of periodic orbits of the geodesic flow bound a surface that intersects negatively any other orbit of the geodesic flow, hence have negative linking number with any other periodic orbit. 
(The collections having this property are those that are symmetric, {\it i.e.}, such that if they contain the lift of an oriented geodesic they also contain the lift of the geodesic with the opposite orientation, and whose projections on the surface induce a checkerboard coloring of the complement.)
Thus the most optimistic conjecture was that the geodesic flow on \emph{any} hyperbolic $n$-conic 2-sphere is left-handed~\cite[Question 1.2]{Pierre}. 
Our present result states that this conjecture is false when the sphere has at least four cone points. 

As mentioned before, left-handedness implies that, in the complement of every finite collection of periodic orbits, there is a global section for the flow. 
This global section is a surface whose boundary is a multiple of the considered collection of periodic orbits: it is a \emph{Birkhoff section} for the flow~\cite{Fried, GhysLeftHanded}. 
Near the boundary of this surface, the flow induces a first-return map that is close to a rotation. 
If there is no mulitplicity (\emph{i.e.}, if the boundary of the section is exactly the considered collection of periodic orbits), then the first-return map is close to the identity near the boundary. 
Therefore we get an open book decomposition of the underlying 3-manifold (see~\cite{Rolfsen, Etnyre}). 
This restriction on the mulitplicy is achieved exactly when the integral homology class of the collection of periodic orbits is zero.
Hence Theorem~A directly implies

\begin{introcorollary}\label{Coro}
	For $\Sigma_{p,q,r}$ a 3-conic 2-sphere with hyperbolic metric, the lift in~$\U\Sigma_{p,q,r}$ of every finite collection of oriented geodesics on~$\Sigma_{p,q,r}$ whose class is zero in~$\H_1(\Sigma_{p,q,r}; \ZZ)$ is the binding of an open book decomposition of~$\U\Sigma_{p,q,r}$.
\end{introcorollary}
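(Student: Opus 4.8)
The plan is to derive the statement from Theorem~\ref{T:Main} together with the general theory of left-handed flows. By Theorem~\ref{T:Main}, since $\Sigma_{p,q,r}$ is a hyperbolic $3$-conic sphere, the geodesic flow on $\U\Sigma_{p,q,r}$ is left-handed. The input I would quote from~\cite{GhysLeftHanded} is that for a left-handed flow on a rational homology sphere, any finite collection of periodic orbits that is null-homologous in the ambient manifold bounds a Birkhoff section, that is, an embedded surface whose interior is transverse to the flow and whose boundary is exactly the collection; cutting along such a section exhibits the complement of the collection as a fibration over the circle, so that the collection is the binding of an open book decomposition. It therefore suffices to check two things: that the lift of a finite collection of oriented closed geodesics consists of periodic orbits of the geodesic flow (which is immediate, since the canonical velocity lift of a closed geodesic is a closed orbit), and that the homological hypothesis forces this lift to be null-homologous in $\U\Sigma_{p,q,r}$.

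Thus I would reduce the corollary to a purely homological statement. Let $L$ be a finite collection of oriented closed geodesics with $[L]=0$ in $H_1(\Sigma_{p,q,r},\ZZ)$ (the orbifold first homology, so that the hypothesis is not vacuous), and let $\widehat L\subset\U\Sigma_{p,q,r}$ be its canonical lift; I must show $[\widehat L]=0$ in $H_1(\U\Sigma_{p,q,r},\ZZ)$. The natural tool is the Seifert fibration $\pi\colon\U\Sigma_{p,q,r}\to\Sigma_{p,q,r}$, with regular fiber $f$. Since $\pi(\widehat L)=L$, the map induced by $\pi$ on first homology sends $[\widehat L]$ to $[L]=0$, so $[\widehat L]$ lies in the subgroup generated by $[f]$, say $[\widehat L]=m\,[f]$. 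As $\U\Sigma_{p,q,r}$ is a rational homology sphere, $[f]$ has finite order, and the task becomes to show that the fibre-winding coefficient $m$ is a multiple of that order.

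The determination of $m$ is where the real work lies. Writing $L=\partial C$ for a $2$-chain $C$ in the orbifold, the coefficient $m$ is exactly the obstruction to extending the section $\widehat L$ of $\pi$ over $L$ to a section over $C$, namely the evaluation of the Euler class of $\pi$ on $C$---equivalently, through Gauss--Bonnet for the hyperbolic metric, the total turning of the frame along $L$ measured relative to $C$. I would make this precise using the presentation of $H_1(\U\Sigma_{p,q,r},\ZZ)$ and of $[f]$ in terms of the Seifert invariants, the Euler number being $\chi^{\mathrm{orb}}(\Sigma_{p,q,r})=-1+\tfrac1p+\tfrac1q+\tfrac1r$, and verify that for a collection null-homologous downstairs the resulting coefficient lands in the required residue class.

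The main obstacle is precisely this last verification, which is genuinely a place where hyperbolicity and the specific Seifert data enter rather than soft topology. That some input beyond ``null-homologous on the base'' is unavoidable is already visible in the positively curved model $\U\Sph^2=\mathbb{RP}^3$, where a single great circle is null-homologous on $\Sph^2$ yet its canonical lift generates $H_1(\mathbb{RP}^3,\ZZ)=\ZZ/2$; the content of the corollary is that for the hyperbolic $3$-conic spheres the fibre contribution is always killed. Once $[\widehat L]=0$ is established, the left-handedness provided by Theorem~\ref{T:Main} together with the cited result of~\cite{GhysLeftHanded} concludes the proof at once.
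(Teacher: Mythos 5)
Your first paragraph is, in substance, the paper's entire proof: Theorem~\ref{T:Main} makes the geodesic flow on~$\U\Sigma_{p,q,r}$ left-handed, and the theorem of~\cite{GhysLeftHanded} then turns any finite collection of periodic orbits that is null-homologous in the ambient $3$-manifold into the boundary of a Birkhoff section, hence into the binding of an open book; the paper invokes exactly this and stops (``Theorem~A directly implies''). You depart from the paper when you insist on reading the hypothesis as a condition downstairs (nullity in the orbifold group $H_1(\Sigma_{p,q,r},\ZZ)$) and undertake to deduce from it that the lift is zero in $H_1(\U\Sigma_{p,q,r},\ZZ)$. You correctly reduce this to showing that the fibre coefficient $m$ is a multiple of the order of the fibre class, and you then leave precisely that verification open, calling it the main obstacle. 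So, as written, the proposal is incomplete exactly at its load-bearing step.

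That step cannot be repaired, because the implication you want is false; the hypothesis has to be read as a condition on the lift. Take $(p,q,r)=(3,4,5)$. Abelianizing $G_{3,4,5}$ (relations $3x=4y=5z=0$, $x+y+z=0$) forces $x=y=z=0$, so the orbifold homology of $\Sigma_{3,4,5}$ is trivial and \emph{every} finite collection of geodesics satisfies the downstairs hypothesis. On the other hand $H_1(\U\Sigma_{3,4,5},\ZZ)$ has order $\Dpqr=13$: by Lemma~\ref{L:Hopf} and the system in the proof of Lemma~\ref{L:Linking}, it is generated by the meridians $\mu_1,\mu_2,\mu_3$ of the Hopf link subject to $2\mu_1=\mu_2+\mu_3$, $3\mu_2=\mu_1+\mu_3$, $4\mu_3=\mu_1+\mu_2$, whence $13\mu_2=0$, $\mu_1=10\mu_2$, $\mu_3=6\mu_2$. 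The word $ab$ satisfies the kneading conditions of Table~\ref{Table}, so it codes an orbit of~$\Tpqr$, which by Theorem~\ref{T:Template} is isotopic to the lift of a closed geodesic; by the linking data in the proof of Lemma~\ref{L:LinkingRed} (an orbit $\gamma$ has class $-\nombre{a}\gamma\,\mu_1+\nombre{b}\gamma\,\mu_2$), this lift has class $-\mu_1+\mu_2=4\mu_2\neq 0$, and any change of sign or labelling conventions leaves this class nonzero. A curve representing a nonzero torsion class bounds no integral $2$-chain and is therefore the binding of no open book, although the geodesic downstairs is null-homologous. Your $\U\Sph^2$ example was the right instinct---the downstairs condition simply does not control the fibre coefficient---but the correct response is to re-read the hypothesis, not to attempt the unprovable implication: the corollary is true, and is ``directly implied'' by Theorem~\ref{T:Main} as the paper asserts, when ``zero in integral homology'' is taken to mean that the lifted collection is zero in $H_1(\U\Sigma_{p,q,r},\ZZ)$, which is exactly what Ghys's theorem consumes and under which your first paragraph already completes the argument. (The only way to keep the downstairs hypothesis would be to weaken the conclusion to rational open books, whose pages may wrap several times around the binding; those are provided by~\cite{GhysLeftHanded} for every collection, with no homological hypothesis at all.)
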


Since~$\U\Sigma_{2,3,7}$ is an integral homology sphere, Corollary~\ref{Coro} implies that every collection of orbits of the geodesic flow in~$\U\Sigma_{2,3,7}$ is the binding of an open book decomposition.

The proof of Theorem~\ref{T:Main} has two independent parts. 
The first part consists in proving that the geodesic flow on a hyperbolic $n$-conic 2-sphere with $n\ge 4$ is not left-handed. 
For this it is enough to find pairs of periodic orbits with linking number of arbitrary sign, and we do it by using an elementary construction.
The second part is more difficult and consists in proving that any two periodic orbits of the geodesic flow on a hyperbolic 3-conic 2-sphere has negative linking number.
Our proof heavily relies on the main result of~\cite{PierreTali} where we constructed a \emph{template} with two ribbons for the geodesic flow on every 3-conic 2-sphere. 
Using this template, we estimate the linking number of every pair of orbits, and prove that it is always negative. 

The plan follows the above scheme: in Section~\ref{S:Preliminaries}, we recall the necessary definitions, in particular what are \emph{geodesic flows} (\S~\ref{S:Spqr}) and \emph{left-handed flows} (\S~\ref{S:LeftHanded}). 
We prove the $n\ge 4$-part of Theorem~\ref{T:Main} in \S~\ref{S:n4}. 
In \S~\ref{S:Template} we present the template for the geodesic flow on a hyperbolic 3-conic 2-sphere constructed in~\cite{PierreTali}.
Section~\ref{S:Computation} then contains the proof of the $n=3$-part of Theorem~\ref{T:Main}.

\tableofcontents

\section{Preliminaries}
\label{S:Preliminaries}

\subsection{Orbifolds, unit tangent bundles, and geodesic flows}
\label{S:Spqr}

A \term{2-dimensional orientable orbifold} is a topological surface equipped with a metric that is locally isometric 
to the quotient of a Riemannian disc by a finite-order rotation group. 
It is \term{hyperbolic} if the metric has everywhere negative curvature. 
The \term{type} of the orbifold is $(g; p_1, \dots, p_n)$, where $g$ is the genus of the underlying surface and $p_1, \dots, p_n$ are the orders of the cone points of the orbifold. 

The {unit tangent bundle}~$\U D$ to a disc~$D$ equipped with a Riemannian metric is the set of tangent vector of length~$1$, hence it is homeomorphic to the solid torus~$D\times \Sph^1$. 
If a finite group~$\ZZ/k\ZZ$ acts by rotations on~$D$, then it acts faithfully on~$\U D$. 
The quotient $\U D/(\ZZ/k\ZZ)$ is then a 3-manifold. 
Actually it is also a solid torus which admits a Seifert fibration whose fibers are the fibers of the points of~$D/(\ZZ/k\ZZ)$.

The \term{unit tangent bundle to a 2-dimensional orbifold} is the 3-manifold that is locally modeled on the quotient of the unit tangent bundle to a Riemannian disc by a finite-order rotation group.
So it is a Seifert fibered space.

\begin{lemma}
	\label{L:QHS}
	The unit tangent bundle to an orientable 2-orbifold~$\Sigma$ of type~$(g; p_1, \dots, p_n)$ is a rational homology sphere if and only if $g=0$.
\end{lemma}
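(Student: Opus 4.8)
The plan is to compute $H_1(\U\Sigma)$ directly from the structure of $\U\Sigma$ as a Seifert fibered space, using that a closed orientable $3$-manifold is a rational homology sphere exactly when its first Betti number vanishes, i.e.\ when $H_1(\U\Sigma;\QQ)=0$. Recall that $\U\Sigma$ fibers over $\Sigma$ with circle fibers, the fiber over each conic point of order~$p_j$ being an exceptional fiber. First I would write down the standard presentation of $\pi_1(\U\Sigma)$ attached to this fibration: generators $a_1,b_1,\dots,a_g,b_g$ coming from the genus of the underlying surface, generators $x_1,\dots,x_n$ encircling the exceptional fibers, and the class~$h$ of a regular fiber, subject to the relations expressing that $h$ is central, that $x_j^{p_j}h^{\beta_j}=1$ at each conic point, and the global relation $\prod_i[a_i,b_i]\,x_1\cdots x_n=h^{k}$ for some integer~$k$.

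Abelianizing and tensoring with~$\QQ$ makes the computation transparent. The generators $a_i,b_i$ occur only inside commutators, so after abelianization they survive as $2g$ free generators. Hence if $g\ge 1$ one has $\dim_\QQ H_1(\U\Sigma;\QQ)\ge 2g\ge 2$, so $\U\Sigma$ is not a rational homology sphere; this settles the ``only if'' direction.

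For the converse, suppose $g=0$. Then $H_1(\U\Sigma;\ZZ)$ is the cokernel of the $(n+1)\times(n+1)$ integer matrix whose rows encode the relations $p_j x_j+\beta_j h=0$ and $x_1+\dots+x_n+kh=0$. A direct expansion shows that its determinant equals, up to sign, $\bigl(\prod_{j}p_j\bigr)\,e$, where $e=-\bigl(k+\sum_j \beta_j/p_j\bigr)$ is the Euler number of the Seifert fibration; for the unit tangent bundle this Euler number is precisely the orbifold Euler characteristic $\chi(\Sigma)=2-2g-\sum_j(1-\tfrac1{p_j})$.

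The main obstacle is therefore to show that this determinant is nonzero, which amounts to checking $e\neq 0$. Since $\Sigma$ carries a hyperbolic metric, its orbifold Euler characteristic is strictly negative, so $e<0$ and the matrix is nonsingular; consequently $H_1(\U\Sigma;\ZZ)$ is finite, $H_1(\U\Sigma;\QQ)=0$, and $\U\Sigma$ is a rational homology sphere. I expect the only delicate points to be fixing the sign and Seifert-invariant conventions so that the Euler number of $\U\Sigma$ is correctly identified with $\chi(\Sigma)$, and recording that it is hyperbolicity (equivalently $\chi(\Sigma)\neq 0$) that forces the determinant to be nonzero.
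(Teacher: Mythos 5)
Your proof is correct in substance and takes a genuinely different route from the paper's. The paper handles the ``only if'' direction geometrically: for $g\ge 1$ the surface contains a nonseparating simple closed curve, whose lift to $\U\Sigma$ gives a class of infinite order in $H_1(\U\Sigma,\QQ)$; your observation that the $2g$ generators $a_i,b_i$ survive abelianization is the algebraic counterpart of the same fact. For $g=0$ the paper does no computation at all: it identifies $\U\Sigma$ with the Seifert fibered space $(Oo0\,\vert\,2{-}n;(p_1,1),\dots,(p_n,1))$ by citing Montesinos and then cites Saveliev for the conclusion. Your presentation-matrix argument replaces both citations by a direct computation, and as a bonus recovers the order formula $\vert H_1(\U\Sigma,\ZZ)\vert = p_1\cdots p_n\,\vert\chi(\Sigma)\vert$ that the paper states without proof only in the remark following the lemma. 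That is what your approach buys: self-containedness and an explicit formula rather than a reference.

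One point must be flagged, however. To get that the determinant is nonzero you invoke a hyperbolic metric on $\Sigma$, but hyperbolicity is \emph{not} a hypothesis of the lemma, which is stated for an arbitrary orientable 2-orbifold. This is not a defect you could have avoided: some such hypothesis is genuinely needed, because the statement as written is false. For the four flat genus-zero orbifolds, of types $(0;2,2,2,2)$, $(0;3,3,3)$, $(0;2,4,4)$ and $(0;2,3,6)$, one has $\chi(\Sigma)=0$; by your own determinant computation $H_1(\U\Sigma,\ZZ)$ is then infinite (indeed $\U\Sigma$ is a torus bundle over the circle), so $\U\Sigma$ is not a rational homology sphere even though $g=0$. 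The paper's proof conceals the same restriction inside the citation of Saveliev, which yields a $\QQ$-homology sphere only when the Euler number --- that is, $\chi(\Sigma)$ --- is nonzero. So your argument should be presented as proving the corrected statement: $\U\Sigma$ is a rational homology sphere if and only if $g=0$ \emph{and} $\chi(\Sigma)\neq 0$, a condition automatic for the hyperbolic orbifolds the paper actually uses. State that hypothesis explicitly instead of importing hyperbolicity silently, and your proof is complete --- and in this respect more careful than the original.
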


\begin{proof}
	If $\Sigma$ is not a topological 2-sphere, it contains a nonseparating simple closed curve. 
	The lift of this curve in~$\U\Sigma$ yields a non-trivial element of $\H_1(\U\Sigma; \QQ)$. 
	So if $\U\Sigma$ is a homology 3-sphere, then~$\Sigma$ is a topological 2-sphere.

	Conversely, if $\Sigma$ is of type $(0;p_1, \dots, p_n)$ its unit tangent bundle is the Seifert fibered space 
	with presentation $(Oo0\,\vert\, 2{-}n; $ $(p_1, 1), \dots, (p_n, 1))$ in the notation of~\cite[p.\,140]{Montesinos} (see  p.\,183 for a proof). 
	By~\cite[p.\ 4]{Saveliev} it is a $\QQ$-homology sphere.
\end{proof}
	
\begin{remark}
	We can also use \cite[p.\ 4]{Saveliev} to see that $\U\Sigma_{0;p_1, \dots, p_n}$ can be a $\ZZ$-homology sphere only for $n=3$. 
	Indeed, the order of $\H_1(\U\Sigma_{0;p_1, \dots, p_n}; \ZZ)$ is $\vert (n-2)p_1\dots p_n - \sum_i p_1\dots \hat p_i\dots p_n\vert$. 
	The condition $\vert (n-2)p_1\dots p_n - \sum_i p_1\dots \hat p_i\dots p_n\vert = 1$ then implies that the $p_i$'s must be pairwise coprime 
	and, by the pigeonhole principle, that one of them must be smaller than $\frac n{n-2}$. 
	For $n=3$, we find the two solutions $(2,3,5)$---which corresponds to Poincar\'e dodecahedral space--- and $(2,3,7)$. 
	For $n\ge 4$, these conditions cannot be fulfilled.
	So Theorem~\ref{T:Main} concerns only one integral homology sphere.
\end{remark}

Thurston showed~\cite{ThurstonNotes} that an orientable 2-orbifold of type $(0; p_1, \dots, p_n)$ admits a hyperbolic metric 
if and only if~$n\ge 5$ or $n=4$ and $(p_1, \dots, p_4)\neq(2,2,2,2)$ or $n=3$ and $\frac 1 {p_1} + \frac 1 {p_2} + \frac 1 {p_3} <1$. 
In this case the orbifold is covered by the hyperbolic plane and it is isometric to~$\Hy / G_{p_1, \dots, p_n}$ for some Fuchsian group~$G_{p_1, \dots, p_n}$. 

For every unit tangent vector to~$\Hy$, there exists a unique geodesic oriented by this tangent vector, 
so that every point of~$\U\Hy$ can be written in a unique way in the form~$(\gamma(0), \dot\gamma(0))$ 
where $\gamma$ is a geodesic travelled at speed~$1$. 
The \term{geodesic flow}~$\varphi$ on~$\U\Hy$ is then defined by~$\varphi^t(\gamma(0), \dot\gamma(0)) := (\gamma(t), \dot\gamma(t))$. 
Since every Fuchsian group~$G$ acts by isometries on~$\Hy$, we can define the geodesic flow on~$\U\Hy/G$ by modding out. 
The important property for our purpose is that an orbit of the geodesic flow is the lift of an oriented geodesic, 
and therefore that periodic orbits of the geodesic flow are lifts of oriented closed geodesics.

\subsection{Linking number in homology spheres}
\label{S:Linking}

Given two disjoint oriented closed curves $\gamma_1, \gamma_2$ in a closed 3-manifold whose rational homology classes are trivial (in particular any disjoint closed curves if $M$ is a rational homology sphere), their \term{linking number} $\lk_M(\gamma_1, \gamma_2)$ is defined as the intersection number of~$\gamma_1$ with a (rational) 2-chain bounded by~$\gamma_2$. 
It is a rational number whose denominator divides the order of the torsion part of~$\H_1(M; \ZZ)$. 
The nullity of $[\gamma_1]\in H_1(M; \QQ)$ implies that this intersection number does not depend on the choice of the 2-chain. 
The same definition extends for $\gamma_1, \gamma_2$ two homologically trivial finite collections of oriented closed curves. 

\begin{example} 
\label{Ex}
	Assume that $\Sigma$ is a genus~$g$-surface with~$g\ge2$ and $f_1, f_2$ are the trigonometrically oriented fibers of two generic points~$x_1, x_2$. 
	Since $\chi(\Sigma)$ is non-zero, there exists a vector field~$Y_1$ on~$\Sigma$ with only one singularity and we can assume that this singularity is at~$x_1$. 
	By the definition of Euler characteristics, the singularity has index~$\chi(\Sigma)$. 
	So~$Y_1$ lifts in~$\U\Sigma$ to a 2-chain whose boundary is~$-\chi(\Sigma)\,f_1$. 
	This implies that $f_1$ is homologically trivial. 
	Moreover, since the lift of~$Y_1$ has intersection~$+1$ with any other generic fiber, in particular with~$f_2$, we have $\lk_{\U\Sigma}(-\chi(\Sigma)f_1, f_2)=+1$, and~$\lk_{\U\Sigma}(f_1, f_2)= -1/{\chi(\Sigma)}$. 

	When $\Sigma$ is any orientable 2-orbifold we also have~$\lk_{\U\Sigma}(f_1, f_2)= -1/{\chi(\Sigma)}$. The proof is similar, except that we have to consider a multivalued vector field. 
\end{example}

\subsection{Left-handed and Anosov flows}
\label{S:LeftHanded}

We now recall the notion of \emph{left-handed flow}, based on~\cite{GhysLeftHanded}. 
The reader in a hurry can directly take Lemma~\ref{L:Anosov} as a definition, since it is all we need in the sequel. 

Roughly speaking, a 3-dimensional flow in a $\QQ$-homology sphere is left-handed if all pairs of periodic orbits are negatively linked. 
However taking this as a definition would produce some strange results as, for example, a flow with no periodic orbit would be left-handed. 
The precise definition actually involves invariant measures, which can be seen as generalizations of periodic orbits 
(indeed a periodic orbit induces a canonical invariant measure: the linear Dirac measure whose support coincide with the periodic orbit). 
Invariant measures form a non-empty convex cone. 

A \term{Gauss\ linking form} on a $\QQ$-homology sphere~$M$ is a $(1,1)$-form on $C(2,M)$---the configuration space of pairs of disjoint points--- whose integral on the product of two disjoint curves gives their linking number. 
Gauss linking forms always exist (Gauss gave the first example on~$\RR^3$, see also~\cite{DTG} for explicit examples on~$\Sph^3$ and $\mathbb{H}^3$ and~\cite{Lescop} for a construction on an arbitrary $\QQ$-homology sphere). 
Given a vector field~$X$ on~$M$, the linking number of two $X$-invariant measures~$\mu, \mu'$ not charging the same periodic orbits is then defined 
by $\lk_{M,X}(\mu, \mu'):=\iint \omega(X(x), X(y)) d\mu(x)d\mu(y)$, where $\omega$ is any diffuse Gauss linking form (the integral always converges).

\begin{definition}\cite{GhysLeftHanded}
\label{D:LeftHanded}
	A non-singular vector field $X$ on a $\QQ$-homology sphere~$M$ is \term{left-handed} 
	if for every invariant measures $\mu, \mu'$ not charging the same periodic orbits we have $\lk_{M,X}(\mu, \mu')<0$.
\end{definition}

Our goal here is not to paraphrase~\cite{GhysLeftHanded}. 
Let us just repeat that left-handed flows have very nice topological properties. 
In particular every finite collection of periodic orbits forms a fibered link whose fiber surfaces are Birkhoff sections for the flow (\emph{i.e.}, intersect every orbit with a bounded first-return time).

In general the space of invariant measures of a vector field is huge (infinite dimensional) and hard to determine. 
However when $X$ is of Anosov type ---as in particular the geodesic flow on a negatively curved 2-orbifold--- left-handedness 
reduces to a property of periodic orbits. 
The reason is the shadowing property, 
namely that every invariant measure is the weak limit of a sequence of (Dirac measures supported by) periodic orbits.

\begin{lemma}\cite[Lemma 2.1]{Pierre}
	\label{L:Anosov}
	A non-singular Anosov vector field $X$ on a $\QQ$-homology sphere is left-handed if and only if every pair of periodic orbits of~$X$ has negative linking number.
\end{lemma}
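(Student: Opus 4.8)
The plan is to prove the two implications separately, the forward one being essentially formal and the reverse one carrying all the difficulty. For the direct implication, suppose $X$ is left-handed. A periodic orbit $\gamma$ carries a canonical invariant probability measure $\mu_\gamma$, namely the normalized time (arc-length) measure supported on $\gamma$. If $\gamma_1,\gamma_2$ are two distinct periodic orbits, then $\mu_{\gamma_1},\mu_{\gamma_2}$ do not charge a common periodic orbit, so Definition~\ref{D:LeftHanded} gives $\lk_{M,X}(\mu_{\gamma_1},\mu_{\gamma_2})<0$. It then suffices to observe that this quantity coincides with $\lk_M(\gamma_1,\gamma_2)$: since $X(x)$ is tangent to $\gamma_i$ along its support and the Gauss form integrates to the linking number on a product of curves, the double integral $\iint\omega(X(x),X(y))\,d\mu_{\gamma_1}(x)\,d\mu_{\gamma_2}(y)$ reduces to $\int_{\gamma_1\times\gamma_2}\omega=\lk_M(\gamma_1,\gamma_2)$. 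Hence every pair of periodic orbits is negatively linked.

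For the converse, assume every pair of periodic orbits has negative linking number, and fix invariant measures $\mu,\mu'$ not charging a common periodic orbit; write $\Lambda(\mu,\mu'):=\iint\omega(X(x),X(y))\,d\mu(x)\,d\mu'(y)$. The key dynamical input is the specification property (equivalently shadowing, or the Anosov closing lemma) of a non-singular Anosov flow, by which the measures carried by periodic orbits are weak-$*$ dense in the whole space of invariant measures. I would thus pick periodic orbits $\gamma_k,\gamma'_k$, which can be taken pairwise distinct, with $\mu_{\gamma_k}\to\mu$ and $\mu_{\gamma'_k}\to\mu'$ in the weak-$*$ topology, so that $\Lambda(\mu_{\gamma_k},\mu_{\gamma'_k})=\lk_M(\gamma_k,\gamma'_k)<0$ for every $k$ by hypothesis.

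The crux is to justify the passage to the limit $\Lambda(\mu_{\gamma_k},\mu_{\gamma'_k})\to\Lambda(\mu,\mu')$. The integrand $(x,y)\mapsto\omega(X(x),X(y))$ is smooth and bounded away from the diagonal $\Delta=\{x=y\}$, where the Gauss form is singular; equivalently, the only obstruction to convergence comes from pairs $(x,y)$ lying close together on a common flow orbit. On the complement of any neighbourhood of $\Delta$ the convergence is immediate from weak-$*$ convergence of the product measures $\mu_{\gamma_k}\times\mu_{\gamma'_k}$. The main obstacle, and the place where the Anosov hypothesis is indispensable, is to bound uniformly in $k$ the mass these product measures place in shrinking neighbourhoods of $\Delta$: the exponential contraction and expansion of the hyperbolic splitting limits the time during which two distinct orbits can shadow one another, and matching this against the integrable order of the singularity of $\omega$ shows the near-diagonal contribution to be uniformly small. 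Since $\mu$ and $\mu'$ charge no common periodic orbit, the limit measure gives no mass to the diagonal, so this contribution vanishes in the limit, the convergence holds, and we obtain $\Lambda(\mu,\mu')\le 0$. Upgrading this to the strict inequality $\Lambda(\mu,\mu')<0$ is the final delicate point: it requires a quantitative, uniform negativity of the approximating linking numbers rather than their bare sign, which is precisely what the shadowing estimates are arranged to deliver.
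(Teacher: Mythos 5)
Your forward direction is fine, and your strategy for the converse (weak-$*$ density of periodic orbit measures via the closing lemma, followed by a limiting argument) is exactly the idea this paper alludes to when it invokes the shadowing property; note that the paper does not prove the lemma itself but cites \cite[Lemma 2.1]{Pierre}. The problem is that the two steps you yourself identify as the crux are not proved, and the justifications you sketch for them are false as stated. It is not true that hyperbolicity ``limits the time during which two distinct orbits can shadow one another'': the Anosov closing lemma produces, for any periodic orbit $\gamma_0$ and any $k$, \emph{distinct} periodic orbits $\gamma_k\neq\gamma'_k$ that stay exponentially close to $\gamma_0$ (hence to each other) during $k$ consecutive periods. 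For such pairs the product measures $\mu_{\gamma_k}\times\mu_{\gamma'_k}$ carry definite mass in arbitrarily small neighbourhoods of the diagonal, so your uniform near-diagonal estimate fails; indeed the pairing $\Lambda$ is genuinely discontinuous at diagonal pairs, since the diagonal value $\Lambda(\delta_{\gamma_0},\delta_{\gamma_0})$ is a writhe-type quantity that can be \emph{positive} --- compare the paper's computation $\lk(a^{p-1}b,a^{p-1}b)=(pq-p-q)/\Dpqr>0$, the curve $(a^{p-1}b)^r$ being isotopic to a fiber --- which is precisely why Definition~\ref{D:LeftHanded} excludes measures charging a common periodic orbit. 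A correct proof must use that hypothesis on $\mu,\mu'$ quantitatively; your argument never does.

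Second, the strict inequality. Even granting the convergence $\Lambda(\mu_{\gamma_k},\mu_{\gamma'_k})\to\Lambda(\mu,\mu')$, the hypothesis provides only the \emph{sign} of each $\lk_M(\gamma_k,\gamma'_k)$, hence only $\Lambda(\mu,\mu')\le 0$. You defer strictness to ``a quantitative, uniform negativity of the approximating linking numbers'' supposedly delivered by shadowing estimates, but no such uniform bound exists in \emph{any} Anosov flow: for pairs $\gamma_k\neq\gamma'_k$ shadowing a common periodic orbit as above, the linking number grows at most linearly in $k$ (it essentially counts the twisting of the invariant bundles along $\gamma_0$, plus bounded excursion terms), while the product of the periods grows quadratically, so the normalized linking numbers $\lk_M(\gamma_k,\gamma'_k)/(T_kT'_k)$ tend to $0$. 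Thus uniform negativity over pairs of distinct periodic orbits is simply false, and requiring it only along the particular sequences approximating your given $\mu,\mu'$ would presuppose $\Lambda(\mu,\mu')<0$, which is circular. This step is not a finishing touch to be finessed; it is the entire content of the lemma, and it is left unproven.
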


\subsection{2-spheres with at least four cone points}
\label{S:n4}

We now prove the elementary part of Theorem~\ref{T:Main}, 
namely that the geodesic flow on~$\U\Spn$ is neither left-handed nor right-handed for $n\ge 4$. 
Since the geodesic flows corresponding to different hyperbolic metrics on the same orbifold are topologically conjugated, 
we can choose our preferred metric.
We then choose the metric so that $\Spn$ is the union of two $n$-gons $F_1, F_2$ in~$\Hy$ 
with angles $\pi/p_1, \dots, \pi/p_n$ glued along their boundaries. 
The polygons $F_1, F_2$ have $n$ vertices $P_1, \dots, P_n$ and are then images one from the other in a mirror (see Fig.~\ref{F:n>4}).

\begin{lemma}
	\label{L:NonIntersecting}
	With the above metric, there exists two periodic geodesics on~$\Spn$ that do not intersect.
\end{lemma}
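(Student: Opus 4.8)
The plan is to produce the two geodesics homotopically, using the fact that on a compact hyperbolic $2$-orbifold (no cusps) every infinite-order element of the Fuchsian group is loxodromic and hence carries a unique closed geodesic. Write $\pi_1^{\mathrm{orb}}(\Spn)=\langle x_1,\dots,x_n \mid x_i^{p_i}=1,\ x_1\cdots x_n=1\rangle$, where $x_i$ is a small loop around the cone point $P_i$; this is the group $G_{p_1,\dots,p_n}$ acting on $\Hy$. The idea is to find a simple closed geodesic $\gamma$ cutting off a disc $D$ that contains only a few cone points, and then a second closed geodesic $\delta$ whose free homotopy class lies in $\pi_1^{\mathrm{orb}}(D)$; being homotopable off $\gamma$, its geodesic representative will be disjoint from $\gamma$. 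I should note that this argument is purely homotopical and so applies to any hyperbolic metric on $\Spn$, the doubled metric included.

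First I would choose $\gamma$. Take a simple closed curve $c$ separating the cone points into two groups, one of which is a disc $D$ carrying $k$ points with $k\in\{2,3\}$. Because $n\ge 4$ each side carries at least two cone points whenever $k=2$, and at least two whenever $k=3$ and $n\ge 5$; in either case $c$ is essential, its class is of infinite order, and I let $\gamma$ be its (simple) closed geodesic representative. The point of keeping $D$ small is that I need $\pi_1^{\mathrm{orb}}(D)$ to contain a non-peripheral loxodromic element. If some two cone points have orders not both equal to $2$, I take those two, so that $\pi_1^{\mathrm{orb}}(D)\cong\mathbb{Z}/p_i\ast\mathbb{Z}/p_j$ is not virtually cyclic; otherwise all orders are $2$, forcing $n\ge 5$ (the type $(2,2,2,2)$ being excluded by hyperbolicity), and I take $k=3$, so that $\pi_1^{\mathrm{orb}}(D)\cong\mathbb{Z}/2\ast\mathbb{Z}/2\ast\mathbb{Z}/2$ is again not virtually cyclic. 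In each case $D$ is a hyperbolic suborbifold with $\chi^{\mathrm{orb}}(D)<0$ and non-virtually-cyclic group, so it contains a non-peripheral closed geodesic $\delta$ (for instance a figure-eight class around two of the cone points, suitably chosen so as not to reduce to a power of the boundary).

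It then remains to verify $\gamma\cap\delta=\emptyset$. Since $\delta$ represents a class in $\pi_1^{\mathrm{orb}}(D)$, it can be drawn inside $D$ without meeting $c$, so the geometric intersection number $i(\gamma,\delta)$ vanishes; as geodesic representatives on a hyperbolic orbifold realize the geometric intersection number (pass to a finite manifold cover, or compare axes in $\Hy$), the geodesics $\gamma$ and $\delta$ are disjoint, and they are distinct, which proves the lemma. I expect the main obstacle to be precisely the choice guaranteeing a non-peripheral geodesic inside $D$: the naive attempt of using a disc with two order-$2$ cone points degenerates, since $\mathbb{Z}/2\ast\mathbb{Z}/2$ is infinite dihedral and all its infinite-order elements are powers of the boundary. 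This is exactly where the hypothesis $n\ge 4$, the exclusion of $(2,2,2,2)$, and the enlargement to three cone points in the all-order-two case are used. (A more hands-on alternative for the doubled metric would take $\delta$ to be the double of the common perpendicular of two non-adjacent edges of the polygon, but for $n=4$ the two such perpendiculars are forced to cross, which is the concrete reason the homotopical construction above is more robust.)
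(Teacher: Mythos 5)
Your proof is correct (modulo standard facts of two-dimensional orbifold topology) but takes a genuinely different route from the paper's. The paper stays entirely inside the doubled-polygon metric and exhibits the geodesics by hand: for $n\ge 5$ it doubles the two common perpendiculars joining $e_1$ to $e_3$ and $e_3$ to $e_5$, and for $n=4$ it builds two geodesics hugging the opposite edges $P_1P_2$ and $P_3P_4$ via a parity analysis of the orders (an edge with both endpoints of even order is itself a closed geodesic; odd orders are handled by winding $\frac{p_i-1}2$ times around the corresponding vertex). You replace this by a soft homotopical argument: a separating curve whose small side $D$ has non-virtually-cyclic orbifold group, a non-peripheral geodesic inside $D$, and disjointness from the realization of geometric intersection number by geodesics. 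Remarkably, both approaches hit the same combinatorial difficulty---$n=4$ for the paper, the all-orders-$2$ case for you---and resolve it differently: the paper by parity/winding, you by enlarging $D$ to three cone points, using the exclusion of $(2,2,2,2)$. What each buys: the paper's construction is elementary, self-contained and explicit, needing no machinery; yours is metric-independent and isolates the conceptual obstruction ($\ZZ/2\ast\ZZ/2$ being virtually cyclic), at the cost of invoking $\pi_1$-injectivity of $D$ (van Kampen amalgam over $\ZZ$), existence of geodesic representatives, and intersection-number realization on orbifolds (justified via finite manifold covers). Two points need care but are not gaps: a naive figure-eight can itself be peripheral when one order is $2$ (e.g.\ $x_ix_j^{-1}$ is conjugate to $(x_ix_j)^{-1}$ if $p_i=2$), so existence of a non-peripheral class should indeed be quoted from non-virtual-cyclicity as you do; and in the all-orders-$2$ case some of your geodesics degenerate to segments between cone points traversed back and forth---these are legitimate closed orbifold geodesics, and the paper's own $n=4$ construction uses such geodesics as well.
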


\begin{proof}
	For $n\ge 5$ (see Fig.~\ref{F:n>4} left), it is enough to choose five consecutive sides $e_1, \dots, e_5$ of~$F_1$, to consider~$s_1$ the shortest segment connecting $e_1$ to~$e_3$ and $s_2$ the shortest segment connecting~$e_3$ to~$e_5$. 
	The two segments $s_1, s_2$ do not intersect on~$F_1$ and their symmetrics do not intersect on~$F_2$ as well. 
	The union of~$s_1$ with its symmetric on~$F_2$ yields a closed geodesic on~$\Spn$ that does not intersect the union of $s_2$ with its symmetric on~$F_2$.
	
\begin{figure}
	\begin{picture}(120,35)(0,0)
	\put(0,0){\includegraphics[width=.8\textwidth]{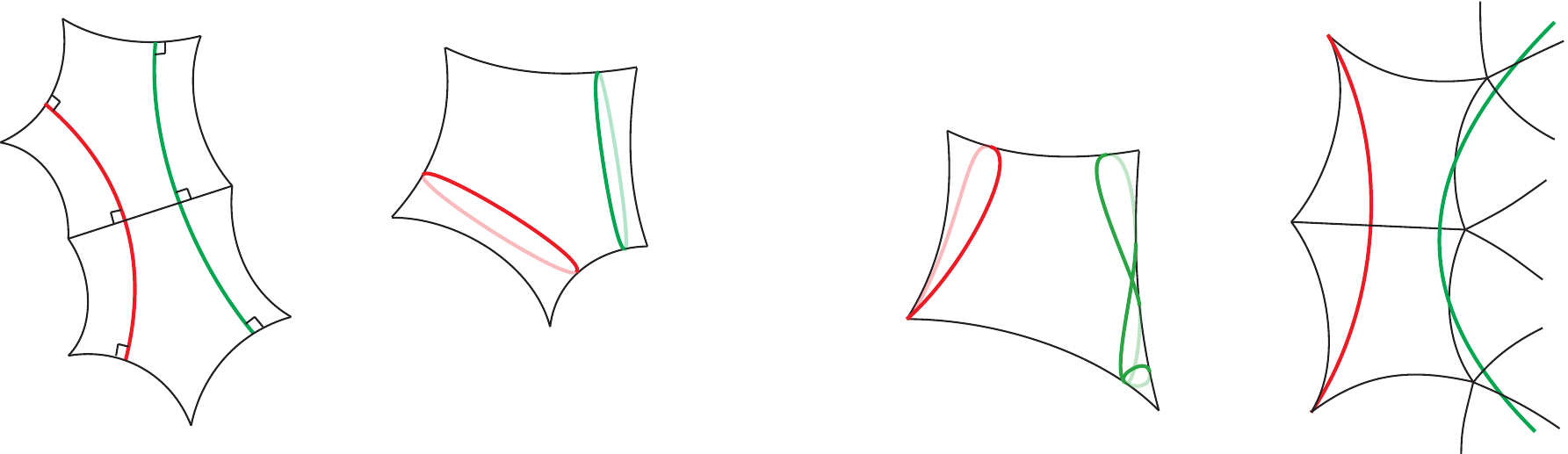}}
	\put(0,30){$e_1$}
	\put(00,20){$e_2$}
	\put(11,16){$e_3$}
	\put(16,26){$e_4$}
	\put(10,33){$e_5$}
	\put(7,25){$F_1$}
	\put(12,10){$F_2$}
	\put(68,24){$P_1$}
	\put(65,9){$P_2$}
	\put(88,22){$P_4$}
	\put(89,3){$P_3$}
	\end{picture}
	\caption{\small Two non-intersecting geodesics on~$\Spn$ for $n>4$ on the left and $n=4$ on the right (on this picture all corners but the bottom left have odd order).}
	\label{F:n>4}
\end{figure}
	
	For $n=4$, we have to refine the idea (see Fig.~\ref{F:n>4} right). 
	We will find two disjoint closed geodesics~$g_{12}, g_{34}$ which are close to the sides of~$P_1P_2$ and $P_3P_4$ respectively.  
	First suppose that the orders $p_1$ and $p_2$ are both even, then the side $P_1P_2$ actually supports a closed geodesics that we choose for~$g_{12}$. 
	Now if $p_1$ is odd and $p_2$ is even, there is a geodesic starting at~$P_2$ and winding $\frac{p_1-1}2$ times around~$P_1$ before coming back to~$P_2$, and then makes the trip in the opposite direction (as along $P_1P_2$ on Fig.~\ref{F:n>4} right). 
	We choose it for~$g_{12}$. 
	Finally if $p_1, p_2$ are both odd, then there is a closed geodesic that starts on the edge $P_1P_2$, winds $\frac{p_1-1}2$ times around~$P_1$, comes back to its initial point, then winds $\frac{p_2-1}2$ around~$P_2$ and comes back to its initial points with its initial direction (as along $P_3P_4$ on Fig.~\ref{F:n>4} right). 
	We choose it for~$g_{12}$.
	Applying the same strategy for~$g_{34}$ we check that, since $g_{12}$ stays close to~$P_1P_2$ and $g_{34}$ stays close to~$P_3P_4$, they do not intersect. 
	(Of course, the same strategy works also for $n\ge 5$, but we had an easier construction in that case.)
\end{proof}

\begin{proof}[Proof of Theorem~\ref{T:Main} for $n\ge 4$]
	Consider $\gamma_1, \gamma_2$ two non-intersecting closed geodesics on $\Spn$ (they exist by Lemma~\ref{L:NonIntersecting}).
	Denote by $\overset{\leftrightarrow}{\gamma_1}$ the set in $\U\Spn$ of those unit vectors that are tangent to~$\gamma_1$ regardless of the orientation, we call it the \term{symmetric lift} of~$\gamma_1$. 
	It is a 2-component link if~$\gamma_1$ does not visit a cone point of even order, and a knot otherwise.
	Now consider the set $S_{\overset{\leftrightarrow}{\gamma_1}}$ of all unit tangent vectors based on points of~$\gamma_1$ (see Fig.~\ref{F:n>4surface}).
	This is the union of two immersed annuli that we orient so that the boundary of the integral 2-chain $S_{\overset{\leftrightarrow}{\gamma_1}}$ is~$2\overset{\leftrightarrow}{\gamma_1}$.
		Since~$S_{\overset{\leftrightarrow}{\gamma_1}}$ lies only in the fibers of the point of~$\overset{\leftrightarrow}{\gamma_1}$ and since $\gamma_2$ does not intersect~$\gamma_1$, the intersection of $\overset{\leftrightarrow}{\gamma_2}$ with $S_{\overset{\leftrightarrow}{\gamma_1}}$ is empty. 
	Therefore $\lk_{\U\Spn}(\overset{\leftrightarrow}{\gamma_1},\overset{\leftrightarrow}{\gamma_2})=0$.

\begin{figure}
	\includegraphics[width=.2\textwidth]{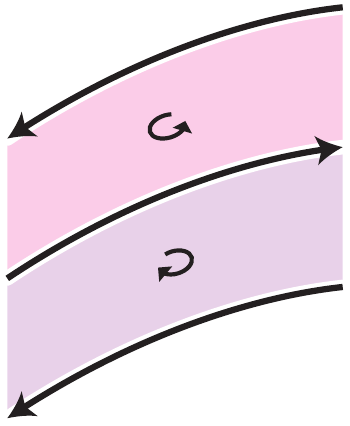}
	\caption{\small The surface $S_{\overset{\leftrightarrow}{\gamma_1}}$ consists of two annuli. Its oriented boundary is $2\overset{\leftrightarrow}{\gamma_1}$.}
	\label{F:n>4surface}
\end{figure}

\end{proof}

\begin{remark}
The reader may be frustrated that in the above proof we exhibited only pairs of periodic orbits 
with linking number zero and not with positive linking number. 
Actually, this is only to simplify the presentation. 
Indeed, if $\gamma_1, \gamma_2$ are two disjoint oriented geodesics on~$\Spn$, 
then $\overset{\rightarrow}{\gamma_1}$ is homologous in the complement of~$\overset{\rightarrow}{\gamma_2}$ to the multiple of some fibers, say $\lambda_1 f_1$, and similarly $\overset{\rightarrow}{\gamma_2}$ is homologous to some $\lambda_2 f_2$.
Since the linking number of two regular fibers is~$- 1 /\chi$, we have $\lk_{\U\Spn}(\overset{\rightarrow}{\gamma_1}, \overset{\rightarrow}{\gamma_2}) = - \lambda_1\lambda_2/\chi$. 
In the proof of Theorem~\ref{T:Main}, we had chosen the geodesics $\gamma_1, \gamma_2$ so that $\lambda_1=\lambda_2= 0$. 
By adding some winding around the cone points, 
we can make $\lambda_1, \lambda_2$ arbitrarily large in the positive or in the negative direction, keeping~$\gamma_1$ and $\gamma_2$ disjoint. 

However, notice that if $\gamma_1$ and $\gamma_2$ intersect, the symmetry between positive and negative is broken, 
as intersection points add a negative contribution to the linking number (namely 
a $J^-$-move~\cite{Arnold} on~$\gamma_1$ or $\gamma_2$ adds $-1$ to~$\lk_{\U\Spn}({\overset{\rightarrow}{\gamma_1}}, \overset{\rightarrow}{\gamma_2})$). 
This explains why linking numbers of lifts of long geodesics are likely to be negative.
\end{remark}
%

\subsection{The template~$\Tpqr$ and its extremal orbits}

Now we turn to orbifolds of type $\Hy/G_{p_1, p_2, p_3}$, that we prefer to denote by~$\Spqr$, 
and we denote by~$\fgeod$ the geodesic flow on~$\U\Spqr$. 
We first recall two results of~\cite{PierreTali} that describe the isotopy class of all periodic orbits of~$\fgeod$.

\begin{lemma}\cite[Prop.\,2.4]{PierreTali}
	\label{L:Hopf}
	The unit tangent bundle~$\U\Spqr$ is obtained from~$\Sph^3$ by surgeries of index~$p{-}1, q{-}1, r{-}1$ 
	on the three components of a positive Hopf link.
\end{lemma}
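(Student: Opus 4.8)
The plan is to realize $\U\Spqr$ explicitly by Dehn surgery on a link in $\Sph^3$ and to identify the surgery coefficients from the Seifert invariants already computed in Lemma~\ref{L:QHS}. Recall that $\U\Spqr$ is the Seifert fibered space with presentation $(Oo0\,\vert\, 2{-}3;\,(p,1),(q,1),(r,1))=(Oo0\,\vert\,{-}1;\,(p,1),(q,1),(r,1))$. I would first pass from this Seifert presentation to a surgery description. The standard recipe (see Montesinos or Orlik) expresses a Seifert fibered space over $\Sph^2$ with $n$ exceptional fibers and Euler number data as surgery on a link consisting of $n$ meridians of the fibers linked with one extra unknot that records the Euler number $b=2-n$ together with a chain of $n$ parallel strands carrying the fiber. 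Concretely, such a space is surgery on an $(n{+}1)$-component ``key-chain'' link, with coefficient $b$ on the central unknot and coefficients $-p_i/1$ (or $p_i$ depending on conventions) on the $n$ satellite unknots.

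\textbf{First step: fix conventions and reduce the chain.} I would set up the surgery diagram for the general Seifert space $(Oo0\,\vert\,b;\,(p_1,1),\dots,(p_n,1))$ as the $b$-framed central unknot $C$ encircling $n$ parallel fibers, each fiber being an unknot $U_i$ with coefficient determined by $(p_i,1)$, and then specialize to $n=3$, $b=-1$. The goal is to manipulate this diagram by Rolfsen twists and Kirby/blow-down moves so that the central component $C$ with its framing $-1$ (a $\mp1$-framed unknot) can be blown down. Blowing down a $\pm1$-framed unknot is precisely the move that converts the ``hub'' coefficient into a full twist on the strands passing through it, changing each of the three satellite coefficients accordingly.

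\textbf{Second step: produce the positive Hopf link and read off the exponents.} After blowing down the $(-1)$-framed central circle, the three remaining components $U_1,U_2,U_3$ acquire a full positive twist among themselves; since they were parallel unknots linking $C$ once each, the twist turns their pairwise linking from $0$ to $+1$, so the three components become a positive Hopf link (each pair linking $+1$ — this is the three-component positive Hopf link, i.e.\ the $(3,3)$-torus link with the all-positive framing). Simultaneously the Rolfsen twist on each $U_i$ changes its surgery coefficient: starting from the coefficient encoding $(p_i,1)$ and adding the $+1$ twist from the blow-down yields exactly the integer $p_i-1$. I would track this arithmetic carefully: the $(p_i,1)$ exceptional fiber corresponds to coefficient $p_i/1$ on $U_i$ in the hub diagram, and the single negative blow-down subtracts $1$, giving $p_i-1$ as claimed. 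Thus $\U\Spqr$ is $(p-1,q-1,r-1)$-surgery on the positive Hopf link, which is the statement.

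\textbf{Main obstacle.} The delicate point is bookkeeping of orientations and framing conventions through the blow-down: different sources normalize the Seifert invariants $(p_i,\beta_i)$ and the Euler number $b$ with opposite signs, and the sign of the Hopf link (positive versus negative) is exactly what must come out right. I would pin this down by a consistency check against Example~\ref{Ex} and the torsion order computed in the Remark after Lemma~\ref{L:QHS}: the first homology of $(p{-}1,q{-}1,r{-}1)$-surgery on the positive Hopf link can be read off from the linking matrix, and I would verify that its order matches $|p q r - qr - pr - pq| = |pqr(1-\tfrac1p-\tfrac1q-\tfrac1r)|$, which is positive precisely in the hyperbolic range $\tfrac1p+\tfrac1q+\tfrac1r<1$. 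Matching this order (and, more strongly, the full linking form) confirms both the surgery coefficients and the positivity of the Hopf link, and rules out sign errors in the conventions.
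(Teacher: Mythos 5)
The paper itself does not prove Lemma~\ref{L:Hopf} --- it imports it from~[DeP14, Prop.~2.4] --- so your argument can only be judged on its own terms; your route (the Seifert presentation of Lemma~\ref{L:QHS}, a hub-and-spokes surgery diagram, then blowing down the hub) is indeed the natural way to prove it, but your Kirby arithmetic is impossible and the ``consistency check'' you claim would confirm it in fact refutes it. Blowing down the $(-1)$-framed hub gives the strands one \emph{positive} full twist, so the spoke framings and their pairwise linking numbers change by the \emph{same} amount $+1$: you cannot end up with pairwise linkings $+1$ \emph{and} framings that have dropped to $p_i-1$. Starting from the correct standard diagram (hub framed $-1$, spokes framed $-p_i$; spokes framed $+p_i$ already give the wrong homology), the blow-down produces the positive Hopf link with coefficients $1-p,\,1-q,\,1-r$, not $p-1,\,q-1,\,r-1$. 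And the configuration you propose to verify, namely all pairwise linkings $+1$ with framings $p-1,q-1,r-1$, has
\[
\det\begin{pmatrix} p-1 & 1 & 1\\ 1 & q-1 & 1\\ 1 & 1 & r-1\end{pmatrix}
= pqr-pq-qr-pr+4 = \Dpqr+4 ,
\]
not $\pm\Dpqr$; moreover the Seifert fibration induced on that surgered manifold has exceptional fibers of orders $|(p-1)-1|=p-2$, etc. So it is not $\U\Spqr$, and the $H_1$ computation you assert ``matches'' would, if actually carried out, have exposed the error rather than confirmed the coefficients.

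The discrepancy is one of conventions, and any correct proof has to face it: the ``index'' in Lemma~\ref{L:Hopf} is not the standard Rolfsen coefficient. You can read the intended convention off the paper's proof of Lemma~\ref{L:Linking}: the filling condition there is $\Dpqr\lk_{\Sph^3}(L_2,H_1)+a_2+a_3=(p-1)a_1$, i.e.\ the relevant matrix is $\left(\begin{smallmatrix}1-p&1&1\\1&1-q&1\\1&1&1-r\end{smallmatrix}\right)$ (determinant $-\Dpqr$), which means the filling on $H_i$ kills the curve $(p_i-1)\mu_i-\lambda_i$ --- Rolfsen coefficient $1-p_i$ on the positive Hopf link, equivalently coefficient $p_i-1$ on its mirror. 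With coefficients $(1-p,1-q,1-r)$ on $\Hopf$ everything is coherent: the determinant is $-\Dpqr$, the exceptional fibers have orders $p,q,r$, and two regular fibers link $1+\frac{pq+qr+pr}{\Dpqr}=\frac{pqr}{\Dpqr}=-1/\chi$, exactly as in Example~\ref{Ex}. So keep your strategy, but redo the blow-down with spoke framings $-p_i$, accept that the output is $(1-p,1-q,1-r)$ on the positive Hopf link, and then identify this with the paper's notion of ``index $p-1,q-1,r-1$''; as written, your proof performs a Kirby move with impossible bookkeeping and rests on a verification that is false.
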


A \term{template} (see~\cite{BW, GHS}) in a 3-manifold is an embedded branched surface made of finitely many rectangular ribbons that are glued along their horizontal sides in such a way that every gluing point is on the bottom side of at most one ribbon (but may be on the top side of several ribbons). 
A template is equipped with the vertical bottom-to-top flow on each rectangle.  
This is actually only a semi-flow since orbits in negative time are not uniquely defined when crossing a branching line.
Given a labeling of the ribbons of the template, the \term{code} of an orbit is the infinite sequence that describes the consecutive ribbons used by the orbit. 
The \term{kneading sequences} (see~\cite{dMvS, HS}) are the codes of the leftmost and rightmost orbits of every ribbon.
(Sometime~\cite{BW, GHS} it is required that the gluing map be Markovian, namely that the top side of every ribbon be glued to the union of the full bottom sides of several ribbons. 
Here we remove this condition, this is in a sense the price to pay for having  a template with only two ribbons.)

Call~$\Tpqr$ the template with two ribbons whose embedding in~$\US$ is depicted on Figure~\ref{F:Tpqr}, whose left and right ribbons are labelled by $a$ and $b$ respectively, 
and whose kneading sequences are the words $u_L, u_R, v_L, v_R$ given by Table~\ref{Table}.

\begin{figure}[ht]
  	\begin{picture}(90,70)(0,0)
      		\put(0,0){\includegraphics*[width=.6\textwidth]{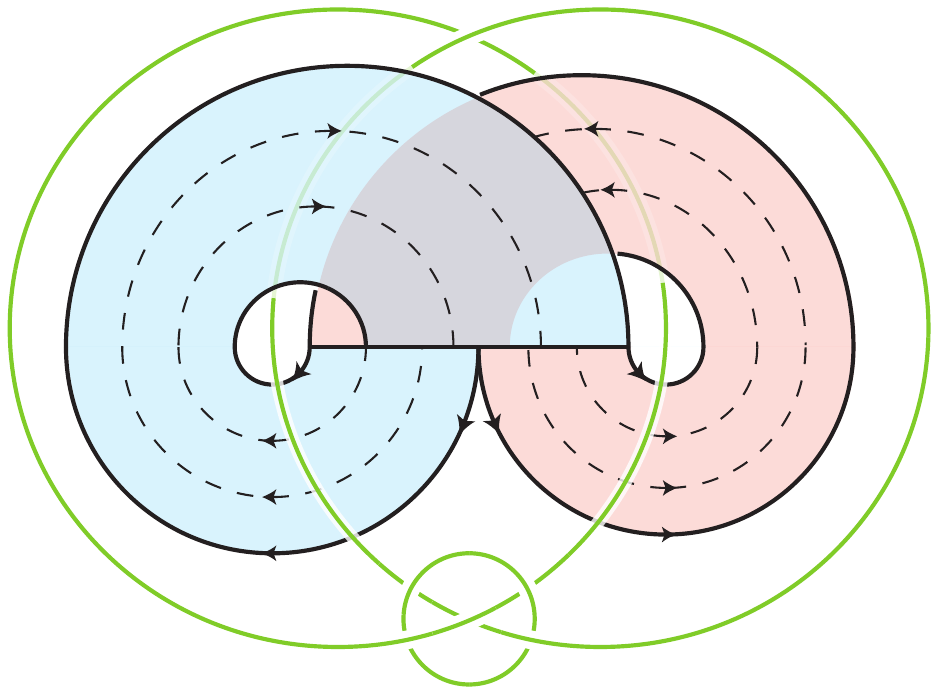}}
      		\put(-4,46){$p{-}1$}
      		\put(88,46){$q{-}1$}
      		\put(49,0){$r{-}1$}
      		\put(18.3,35.5){$u_L$}
      		\put(67.5,35.5){$v_R$}
      		\put(51,21){$v_L$}
      		\put(36.5,21){$u_R$}
      		\put(18,51){$a$}
      		\put(68,51){$b$}
  	\end{picture}
  	\caption{\small The template~$\Tpqr$ in~$\US$. 
  	The 3-manifold~$\US$ is obtained from~$\Sph^3$ by surgeries on a three-components Hopf link (green) with the given indices. 
  	The template~$\Tpqr$ is characterized by its embedding in~$\US$ and by the kneading sequences that describe the orbits of the extremities of the ribbons. 
  	In case $r$ is infinite, $\US$ is the open manifold obtained by removing the bottommost component of the link.
	In the case $p=2$, the exit side of the left ribbon is strictly included into the entrance side of the right ribbon.}
  	\label{F:Tpqr}
\end{figure}

\begin{table}[ht]
	\begin{center}
	\begin{tabular}{c|c|c}
		& $u_L$ & $v_R$ \\ \hline
   		$r$ infinite & $(a^{p-1}b)^\infty$  
  	  	& $(b^{q-1}a)^\infty$ \\ \hline
		\hline
		$p, q, r>2$& & \\ \hline
  		$r$ odd & $((a^{p-1}b)^{\frac{r-3}{2}} a^{p-1}b^2)^\infty$  
  	  	& $((b^{q-1}a)^{\frac{r-3}{2}} b^{q-1}a^2)^\infty$ \\ \hline
		$r$ even & $((a^{p-1}b)^{\frac{r-2}{2}} a^{p-2}(ba^{p-1})^{\frac{r-2}{2}}b^2)^\infty$
		& $((b^{q-1}a)^{\frac{r-2}{2}} b^{q-2}(ab^{q-1})^{\frac{r-2}{2}}a^2)^\infty$ \\ \hline
		\hline
		$p=2, q>2,r>4$& & \\ \hline
   		$r$ odd & $((ab)^{\frac{r-3}{2}} ab^2)^\infty$  
  	  	& $b^{q-1}((ab^{q-1})^{\frac{r-5}{2}} ab^{q-2})^\infty$ \\ \hline
  		$r$ even & $((ab)^{\frac{r-4}{2}} ab^2)^\infty$  
  	  	& $b^{q-1}((ab^{q-1})^{\frac{r-4}{2}} ab^{q-2})^\infty$ \\ \hline
	\end{tabular}
		
	\vspace{1mm}
	 $v_L:=bu_L,$ 
	 
	 $u_R:=av_R$
	 \end{center} 
	\caption{\small The kneading sequences of the template~$\Tpqr$. }
	\label{Table}
\end{table}

\begin{theorem}\cite[Theomem A]{PierreTali}
  	\label{T:Template}
	Up to one exception, there is a one-to-one correspondence between periodic orbits of the geodesic flow~$\fgeod$ on~$\US$ and periodic orbits of the template~$\Tpqr$ such that its restriction to every finite collection is induced by an isotopy (which depends on the collection).
\end{theorem}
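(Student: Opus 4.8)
The plan is to realize $\Tpqr$ as the image of the geodesic flow under a Birman--Williams collapse, and to let the general template machinery supply \emph{both} the bijection and the isotopy-on-finite-collections, so that the genuine work becomes the explicit identification of the collapsed object together with its embedding and kneading data. First I would use that $\fgeod$ is an Anosov flow on the compact rational homology sphere $\US$ (negative curvature of the cone metric), so that it carries well-defined stable and unstable foliations and its non-wandering set is all of $\US$. Collapsing each (strong) stable manifold to a point turns the flow into the vertical semi-flow on a branched surface $T$ embedded in $\US$. The output of this construction is a bijection between periodic orbits of $\fgeod$ and periodic orbits of $T$ whose restriction to any finite collection is realized by a smooth isotopy --- which is exactly the desired conclusion, once $T$ is shown to be $\Tpqr$. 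Thus the theorem reduces to checking that the collapse is well-defined and injective on periodic orbits in this Seifert-fibred setting, and to identifying $T$, its embedding, and its extremal orbits.

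For the identification I would produce an explicit symbolic coding of geodesics. Using the $(p,q,r)$-triangle $\Dpqr$ as a (half-)fundamental domain for the action of $\Gpqr$ on $\Hy$ --- recall that $\Spqr$ is the double of $\Dpqr$ across its boundary --- every oriented geodesic is coded by its cutting sequence across the tessellation, equivalently by a Bowen--Series boundary map on $\bord\Hy$. The mirror symmetry exchanging the two copies of $\Dpqr$ should reduce this coding to two symbols after the stable collapse, and these are precisely the two ribbons $a$ and $b$; the branching pattern of $T$ records how a geodesic re-enters a ribbon after crossing a side of $\Dpqr$. The rotations of orders $p,q,r$ fixing the conic points force a geodesic passing near a conic point to wind, and this winding is what produces the blocks $a^{p-1}$, $b^{q-1}$ and the $r$-dependent exponents of Table~\ref{Table}. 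Concretely one builds a Markov partition from this coding and reads the template off the transition structure as the suspension of the associated subshift.

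To pin down the embedding I would match $T$ against the surgery description of Lemma~\ref{L:Hopf}: the three surgered components, with framings $p{-}1,q{-}1,r{-}1$, are the cores linking the three singular fibres over the conic points, and tracking how the two ribbons thread around them reproduces the picture of Figure~\ref{F:Tpqr}. The kneading sequences $u_L,u_R,v_L,v_R$ are then computed by following the boundary orbits of each ribbon, i.e. the geodesics running over the singular fibres: near a conic point of order $k$ the local turning through the rotation is translated into a definite word, and closing up the left- and right-most orbits gives the entries of Table~\ref{Table}.

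I expect the main obstacle to lie precisely in these last two points. On one hand one must verify that the Birman--Williams collapse is genuinely an embedding --- that it introduces no new linking and is isotopic to the identity on each finite sub-link --- which in the orbifold setting requires performing the collapse equivariantly and checking the local model near each singular fibre (a quotient solid torus). On the other hand the extremal orbits pass exactly over the conic points, where the combinatorics of how the boundary orbits close up depends delicately on the parity of $r$ and on the degenerate case $p=2$ (in which, as noted under Figure~\ref{F:Tpqr}, the exit side of the left ribbon sits strictly inside the entrance side of the right ribbon). This dependence is what forces the separate lines of Table~\ref{Table}; the analysis is elementary but is where essentially all the combinatorial content, and hence the bulk of the proof, resides.
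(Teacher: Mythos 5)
You should first be aware that this paper contains no proof of the statement at all: Theorem~\ref{T:Template} is imported verbatim from the companion paper~\cite{PierreTali} (with a small correction, discussed in the remark that follows it in the text), so there is no internal argument to compare yours against, and your proposal has to stand as a proof of a result whose demonstration occupies a separate article. Taken on its own terms, your outline names the right ingredients --- hyperbolicity of $\fgeod$, a Birman--Williams-type collapse, coding of geodesics by cutting sequences of the $(p,q,r)$-triangle, identification of the embedding through the surgery picture of Lemma~\ref{L:Hopf}, and extraction of the kneading sequences of Table~\ref{Table} --- but it executes none of them (you yourself locate ``the bulk of the proof'' in steps you do not carry out), and the one step you do treat as automatic contains a genuine error.

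The error is in the collapse. For an Anosov flow on a closed $3$-manifold every strong stable leaf is dense, so ``collapsing each (strong) stable manifold to a point'' is not a meaningful operation: the naive quotient is degenerate and certainly not an embedded branched surface in $\US$. The Birman--Williams construction identifies points along \emph{local} stable segments, and when the chain-recurrent set is the whole manifold (the Anosov case) it can only be run after first splitting the manifold open along the invariant manifolds of finitely many chosen periodic orbits. That preliminary choice is not bookkeeping: it is where the boundary orbits of the template and hence the kneading data of Table~\ref{Table} come from, and it is exactly why the statement proved in~\cite{PierreTali} is \emph{not} a clean bijection --- two orbits of $\Tpqr$ correspond to the same orbit of $\fgeod$, as the remark immediately following the theorem records. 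So the general machinery cannot ``supply both the bijection and the isotopy-on-finite-collections'' as you claim; producing the correspondence, locating its exceptional orbits, and verifying that the collapse does not change the isotopy class of any finite collection (including collections passing near the exceptional fibers over the conic points) is precisely the content of the cited proof, not a consequence of abstract theory that reduces the problem to combinatorics.
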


The exception mentioned in the statement consists of two orbits of the template that correspond to the same orbit of the geodesic flow. 
In the sequel, we only need to consider the knots and links in $\Tpqr$ since our strategy is now to prove that any orbits of~$\Tpqr$ have negative linking number in~$\U\Spqr$. 
The exception is therefore not a problem as we will just prove twice that a given orbit links negatively with all others.

\subsection{Computing linking numbers in~$\U\Spqr$}
\label{S:Template}

Our goal is to estimate linking numbers of orbits of the template~$\Tpqr$. 
The latter sits in~$\U\Spqr$, but is depicted in~$\Sph^3$.
Therefore we need a formula that gives the linking number after surgery in terms of information that can be read directly on Figure~\ref{F:Tpqr}, 
namely the linking number before surgery and the linking numbers of the links with the different components of the Hopf link~$\Hopf$.
The next result provides such a formula. 
An analog statement holds in any two manifolds related by surgeries, but we prefer to state in the case which we are interested in.
Denote by $\Dpqr$ the number~$pqr-pq-qr-pr=pqr(1-\frac1 p-\frac 1 q-\frac 1 r)$. 
It is the order of the group~$\H_1(\U\Spqr; \ZZ)$. 
Denote by~$\Qpqr$ the bilinear form on~$\RR^3$ described by the matrix~
$\left(\begin{smallmatrix}
qr-q-r & r & q\\
r & pr-p-r & p\\
q & p & pq-p-q 
\end{smallmatrix}\right)$.

\begin{lemma}[Evolution of linking numbers]
	\label{L:Linking}
	For $L_1, L_2$ two disjoint links in~$\Sph^3$ that are also disjoint from the Hopf link~$\Hopf$, their linking number after performing surgeries of respective index~$(p-1, q-1, r-1)$ on the components $(H_1, H_2, H_3)$ of~$\Hopf$ is given by
	\[\lk_{\U\Spqr}(L_1, L_2) = \lk_{\Sph^3}(L_1, L_2) + \frac 1 \Dpqr \Qpqr
	\left( 
		\left(\begin{smallmatrix}
			\lk_{\Sph^3}(L_1, H_1)\\
			\lk_{\Sph^3}(L_1, H_2)\\
			\lk_{\Sph^3}(L_1, H_3)
		\end{smallmatrix}\right),
		\left(\begin{smallmatrix}
			\lk_{\Sph^3}(L_2, H_1)\\
			\lk_{\Sph^3}(L_2, H_2)\\
			\lk_{\Sph^3}(L_2, H_3)
		\end{smallmatrix}\right)
	\right)
	.\]
\end{lemma}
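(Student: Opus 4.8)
The plan is to reduce the statement to a single piece of linear algebra, via the standard behaviour of linking numbers under Dehn surgery. By Lemma~\ref{L:Hopf}, the manifold~$\US$ is obtained from~$\Sph^3$ by surgery on the three components $H_1,H_2,H_3$ of the Hopf link~$\Hopf$, with framings read off from the indices $p-1,q-1,r-1$. To such a surgery is attached its \emph{linking matrix} $\Lambda$, whose diagonal entries are the framings and whose off-diagonal entries are the pairwise linking numbers $\lk_{\Sph^3}(H_i,H_j)$. Fixing the orientations and the framing convention as dictated by Figure~\ref{F:Tpqr}, this matrix is
\[
\Lambda=\begin{pmatrix} p-1 & -1 & -1 \\ -1 & q-1 & -1 \\ -1 & -1 & r-1 \end{pmatrix}.
\]

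The first step is to invoke the formula governing the change of linking numbers under surgery. Writing $\vec\ell_L=(\lk_{\Sph^3}(L,H_1),\lk_{\Sph^3}(L,H_2),\lk_{\Sph^3}(L,H_3))$ for the vector recording the linking of a link~$L$ with the surgery components, the formula reads, in the form adapted to our situation,
\[
\lk_{\US}(L_1,L_2)=\lk_{\Sph^3}(L_1,L_2)+\vec\ell_{L_1}^{\,\mathrm T}\,\Lambda^{-1}\,\vec\ell_{L_2}.
\]
Conceptually, the correction term is bilinear in these vectors because, writing $\kappa_1,\kappa_2,\kappa_3$ for the cores of the three surgery solid tori, every exterior link~$L$ satisfies $[L]=\sum_i(\vec\ell_L)_i[\kappa_i]$ in $H_1(\US;\QQ)$ (the old meridian of $H_i$ becoming homologous to $\kappa_i$ after filling), while the matrix of pairwise linking numbers of the cores is precisely $\Lambda^{-1}$. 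To keep the paper self-contained I would include a short derivation: start from a Seifert surface for~$L_2$ in~$\Sph^3$, correct it inside the surgery tori by the rational $2$-chain prescribed by $\Lambda^{-1}$ so that it closes up into a rational $2$-chain bounded by~$L_2$ in~$\US$, and intersect it with~$L_1$; the intersection splits into the exterior contribution $\lk_{\Sph^3}(L_1,L_2)$ plus the correction term read off from the coefficients of $\Lambda^{-1}$.

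It then remains to identify $\Lambda^{-1}$ with $\frac1{\Dpqr}\Qpqr$. Rather than invert~$\Lambda$, I would verify the matrix identity $\Lambda\,\Qpqr=\Dpqr\,I$ by direct multiplication. A diagonal entry expands, for instance in the first row, to $(p-1)(qr-q-r)-r-q=\Dpqr$, while each off-diagonal entry telescopes to~$0$, for instance $(p-1)r-(pr-p-r)-p=0$. Since $\Dpqr=pqr-pq-qr-pr=pqr(1-\frac1p-\frac1q-\frac1r)>0$ in the hyperbolic range, this identity gives $\Lambda^{-1}=\frac1{\Dpqr}\Qpqr$ at once, and substituting into the surgery formula produces the claimed expression. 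A direct expansion of $\det\Lambda$ moreover returns $\Dpqr$, recovering the stated fact that $\Dpqr$ is the order of $H_1(\US,\ZZ)$.

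I expect the genuine difficulty to lie entirely in the sign bookkeeping, not in any single computation. Three independent conventions must be pinned down and made mutually consistent: the orientations of the three components of~$\Hopf$, which fix the signs of the off-diagonal entries of~$\Lambda$; the precise meaning of ``surgery of index~$p-1$'' as a framing, which fixes the diagonal and in particular decides whether one works with~$\Lambda$ or with~$-\Lambda$; and the global sign in the surgery formula itself. Getting any one of these wrong flips the sign of the correction term and would wrongly predict positive linking numbers. The safest route is therefore to calibrate all signs against a single independently computed instance---for example the fibers of two regular points, whose linking number $-1/\chi$ is obtained directly in Example~\ref{Ex}---and to check that the formula reproduces it before committing to the general statement.
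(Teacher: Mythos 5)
Your proposal is correct and follows essentially the same route as the paper: the ``short derivation'' you sketch (correcting a rational Seifert chain for $L_2$ inside the surgery tori and intersecting the result with $L_1$) is precisely the paper's proof, and your identity $\Lambda\,\Qpqr=\Dpqr I$ is the same linear algebra as the paper's inversion of its system matrix $\left(\begin{smallmatrix} 1-p & 1 & 1\\ 1 & 1-q & 1\\ 1 & 1 & 1-r \end{smallmatrix}\right)$. Your closing insistence on calibrating all sign conventions against the fiber computation of Example~\ref{Ex} is not just prudent but necessary---taking the paper's ``positive Hopf link'' and ``index $p-1$'' at face value together with the usual minus-sign surgery formula would fail that check (and would even give the wrong order for $H_1$)---and it is exactly the verification the paper itself performs immediately after its proof, so your particular pairing of the off-diagonal $-1$ matrix with a plus-sign formula is the unique combination that survives, making the argument complete.
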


\begin{proof}
	Let $S_2$ be a simplicial integral 2-chain in~$\Sph^3$ bounded by~$\Dpqr L_2$. 
	After possibly canceling pairs of intersection points with different orientations by tunneling, we can assume that $S_2$ intersects each component~$H_i$ of the Hopf link in~$\vert \lk_{\Sph^3}(L_2, H_i) \vert$ points. 
	Let $\nu(\Hopf)$ be a tubular neighborhood of~$\Hopf$. 
	The boundary of~$S_2$ in~$\Sph^3\setminus\nu(\Hopf)$ is then made of $\lk_{\Sph^3}(L_2, H_i)$ meridian circles on every component~$H_i$.


	Let~$D_1$ be a punctured disc in~$\Sph^3\setminus \nu(\Hopf)$ bounded by a longitude of~$H_1$, a meridian of~$H_2$ and a meridian of~$H_3$. 
	Define similarly~$D_2$ and $D_3$. 
	For $a_1, a_2, a_3$ in~$\ZZ$, the boundary of the 3-chain $a_1 D_1 + a_2 D_2 + a_3 D_3$ then consists of a cycle of $a_1$ longitudes and $a_2+a_3$ meridians of~$H_1$, $a_2$ longitudes and $a_1+a_3$ meridians of~$H_2$, and $a_3$ longitudes and $a_1+a_2$ meridians of~$H_3$.
	
	The 2-chain $S_2 +a_1 D_1 + a_2 D_2 + a_3 D_3$ can then be completed by adding meridian discs into a 2-chain~$\bar S_2$ in~$\U\Spqr$ 
	whose boundary only consists of~$\Dpqr L_2$ 
	if and only if the restriction of the boundary of $S_2 +a_1 D_1 + a_2 D_2 + a_3 D_3$ to $\bord\nu(\Hopf)$ consists of curves with slope $p-1$ on~$\bord H_1$ (\emph{resp.} $q-1$ on~$\bord H_2$, \emph{resp.} $r-1$ on~$\bord H_3$), 
	so if and only if $a_1, a_2, a_3$ satisfy the system
	\begin{eqnarray*}
		\Dpqr\lk_{\Sph^3}(L_2, H_1) + a_2 + a_3 &=& (p-1) a_1\\
		\Dpqr\lk_{\Sph^3}(L_2, H_2) + a_1 + a_3 &=& (q-1) a_2\\
		\Dpqr\lk_{\Sph^3}(L_2, H_3) + a_1 + a_2 &=& (r-1) a_3.
	\end{eqnarray*}

	Since $ \left(\begin{smallmatrix}
		1-p & 1 & 1\\
		1 & 1-q & 1\\
		1 & 1 & 1-r 
	\end{smallmatrix}\right)^{-1} = 
	\frac{-1} \Dpqr
	\left(\begin{smallmatrix}
		qr-q-r & r & q\\
		r & pr-p-r & p\\
		q & p & pq-p-q 
	\end{smallmatrix}\right)$, the solution of the system is given by $		
	\left(\begin{smallmatrix}
		a_1\\a_2\\a_3
	\end{smallmatrix}\right) = 
	\left(\begin{smallmatrix}
		qr-q-r & r & q\\
		r & pr-p-r & p\\
		q & p & pq-p-q 
	\end{smallmatrix}\right)
	\left(\begin{smallmatrix}
		\lk_{\Sph^3}(L_2, H_1)\\
		\lk_{\Sph^3}(L_2, H_2)\\
		\lk_{\Sph^3}(L_2, H_3)
	\end{smallmatrix}\right).$	
	
	Finally, the 1-chain $L_1$ intersects~$\Dpqr\lk_{\Sph^3}(L_1, L_2)$ times~$S_2$, and $\lk_{\Sph^3}(L_1, H_i)$ times the disc~$D_i$ for every~$i$, so its intersection with~$\bar S_2$ is equal to
	\[\Dpqr\lk_{\Sph^3}(L_1, L_2) + \sum_i a_i\lk_{\Sph^3}(L_1, H_i).\]
	Dividing by~$\Dpqr$ yields the desired formula.
\end{proof}

Let us test the above formula on Example~\ref{Ex}: the linking number between any two regular fibers of the unit tangent bundle~$\U\Sigma$ equals $- 1/ {\chi(\Sigma)}$ on any $n$-conic sphere~$\Sigma$, so the linking number equals $\frac{pqr}\Dpqr$ when $\Sigma= \Spqr$.
In our presentation, two fibers of the unit tangent bundle correspond to two fibers of the Hopf fibration, thus have linking number~$+1$ in $\Sph^3$ and $+1$ with every component of the Hopf link. 
By Lemma~\ref{L:Linking} their linking number is equal to $1+\frac1\Dpqr\Qpqr((1,1,1),(1,1,1)) = \frac{pqr}\Dpqr$ in~$\U\Sigma$, as expected.

Lemma~\ref{L:Linking} admits a simpler expression when it is applied to some orbits of the template~$\Tpqr$. 
For $\gamma$ an orbit of~$\Tpqr$, denote by $\nombre a \gamma$ and $\nombre b \gamma$ the respective numbers of letters~$a$ and~$b$ in the code of~$\gamma$. 
Denote by $\Qred$ the bilinear form on~$\RR^2$ given by the matrix 
\[\begin{pmatrix}
qr-q-r & -r\\
-r & pr-p-r 
\end{pmatrix}.\]
For~$\gamma, \gamma'$ two orbits of~$\Tpqr$, denote by~$\cro(\gamma, \gamma')$ their \term{crossing number} on~$\Tpqr$, that is, the number of double points using the standard projection of the template (as in Fig.~\ref{F:Tpqr}).

\begin{lemma}
\label{L:LinkingRed}
	For~$\gamma, \gamma'$ two orbits of~$\Tpqr$, one has
	\begin{equation}\tag{*}
		\label{Eq:lk}
	\lk_{\U\Spqr}(\gamma, \gamma') = -\small{\frac 1 2}\cro(\gamma, \gamma') + {\frac 1 \Dpqr} \Qred\left((\nombre a \gamma,\nombre b \gamma),(\nombre a \gamma',\nombre b \gamma')\right).
	\end{equation}
\end{lemma}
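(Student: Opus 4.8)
The plan is to specialize the surgery formula of Lemma~\ref{L:Linking} to two orbits carried by the template, so that every term on its right-hand side can be read directly off Figure~\ref{F:Tpqr}. Since $\Tpqr$ is embedded in the complement of the Hopf link~$\Hopf$, the orbits $\gamma,\gamma'$ are disjoint from~$\Hopf$ and we may take $L_1=\gamma$, $L_2=\gamma'$ in Lemma~\ref{L:Linking}. This reduces the statement to three inputs: the linking number $\lk_{\Sph^3}(\gamma,\gamma')$ before surgery, and the two triples of linking numbers of $\gamma$ and of $\gamma'$ with the components $H_1,H_2,H_3$ of~$\Hopf$.

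First I would handle the $\Sph^3$-term. Both orbits lie on the branched surface~$\Tpqr$, and the projection used in Figure~\ref{F:Tpqr} is exactly the one defining $\cro(\gamma,\gamma')$. The point is that in this projection all double points between two distinct orbits carry the same (negative) sign, a feature of the way the ribbons are glued along the branch line. As two closed curves meet in an even number of crossings and the signed crossing count equals twice the linking number, this gives $\lk_{\Sph^3}(\gamma,\gamma')=-\tfrac12\cro(\gamma,\gamma')$, which is the first term of~\eqref{Eq:lk}.

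Next I would read the Hopf linking numbers off Figure~\ref{F:Tpqr}. Each traversal of the left ribbon~$a$ produces one signed intersection with the component carrying the surgery coefficient~$p{-}1$, and each traversal of the right ribbon~$b$ one signed intersection with the component carrying~$q{-}1$; once the three components are coherently oriented these two contributions have opposite signs, while the orbit has zero linking number with the top component carrying~$r{-}1$. This identifies the triple $(\lk_{\Sph^3}(\gamma,H_1),\lk_{\Sph^3}(\gamma,H_2),\lk_{\Sph^3}(\gamma,H_3))=(\nombre a\gamma,-\nombre b\gamma,0)$, and similarly for~$\gamma'$.

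It then remains to substitute and simplify. Because the third coordinate vanishes, all entries of the matrix of~$\Qpqr$ carrying the third index disappear, leaving its upper-left $2\times2$ block evaluated on $(\nombre a\gamma,-\nombre b\gamma)$ and $(\nombre a{\gamma'},-\nombre b{\gamma'})$: the diagonal coefficients $qr-q-r$ and $pr-p-r$ are preserved, and the sign flip on the $b$-coordinate changes the off-diagonal $+r$ into $-r$, which is precisely the definition of~$\Qred$. Assembling the three computations yields~\eqref{Eq:lk}. I expect the main obstacle to be the third step: pinning down the correct signs of the Hopf linking numbers, and especially justifying the vanishing of the linking with the top component, since this requires reading the precise embedding of~$\Tpqr$ in~$\US$ encoded in Figure~\ref{F:Tpqr}; by contrast the final algebraic collapse is routine.
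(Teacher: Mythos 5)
Your proposal is correct and follows essentially the same route as the paper's proof: specialize Lemma~\ref{L:Linking} to $L_1=\gamma$, $L_2=\gamma'$, use that all crossings on~$\Tpqr$ are negative to get $\lk_{\Sph^3}(\gamma,\gamma')=-\tfrac12\cro(\gamma,\gamma')$, read the linking numbers with the Hopf components off Figure~\ref{F:Tpqr}, and collapse $\Qpqr$ to~$\Qred$. The only discrepancy is that you assign $(\nombre a\gamma,-\nombre b\gamma,0)$ where the paper has $(-\nombre a\gamma,\nombre b\gamma,0)$; this is an immaterial global orientation choice for the Hopf link components, since both orbits' vectors flip together and $\Qpqr(-x,-y)=\Qpqr(x,y)$, so either convention yields the same formula~\eqref{Eq:lk}.
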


\begin{proof}
Since all crossings on~$\Tpqr$ are negative, one has $\lk_{\Sph^3}(\gamma, \gamma') = -\frac 1 2\cro(\gamma, \gamma')$. 
Also, one checks on Figure~\ref{F:Tpqr} that $\lk_{\Sph^3}(\gamma, H_1) = -\nombre a \gamma, \lk_{\Sph^3}(\gamma, H_2) = \nombre b \gamma$ and $\lk_{\Sph^3}(\gamma, H_3) = 0$. 
Plotting these formulas into Lemma~\ref{L:Linking}, we obtain 
	\begin{eqnarray*}
	\lk_{\U\Spqr}(\gamma, \gamma') &=& -\frac 1 2\cro(\gamma, \gamma') + \frac 1 \Dpqr \Qpqr((-\nombre a \gamma,\nombre b \gamma,0),(-\nombre a \gamma',\nombre b \gamma',0))\\
	&=& -\frac 1 2\cro(\gamma, \gamma') + \frac {qr-q-r} \Dpqr (\nombre a \gamma)(\nombre a \gamma') - \frac {r} \Dpqr (\nombre a \gamma) (\nombre b \gamma')\\
	&& - \frac {r} \Dpqr (\nombre b \gamma) (\nombre a \gamma') + \frac {pr-p-r} \Dpqr (\nombre b \gamma) (\nombre b \gamma')\\
	&=& -\frac 1 2\cro(\gamma, \gamma') + \frac 1 \Dpqr \Qred((\nombre a \gamma,\nombre b \gamma),(\nombre a \gamma',\nombre b \gamma')).
	\end{eqnarray*}
	
\vspace{-4mm}
\end{proof}

\section{Main computation}
\label{S:Computation}

In this section we prove the hard part of Theorem~\ref{T:Main},  
namely that the linking number of any two periodic orbits of~$\Tpqr$ is negative. 
In Lemma~\ref{L:LinkingRed}, the term~$-\frac 1 2\cro(\gamma, \gamma')$ contributes with the desired sign to the linking number, whereas the second term contributes positively when, for example, $\gamma$ and $\gamma'$ contains only the letter $a$.
We will see that this contribution is always compensated by the first term. 
However, this compensation only holds for the orbits of~$\Tpqr$, not for two arbitrary words, so we will use in a crucial way the fact that~$\Tpqr$ is a strict subtemplate of the Lorenz template.  
In particular, it is necessary that the orbits are balanced in the sense that the code of an orbit cannot contain only one letter. 

The notion of concatenation of words plays a central role in our proof. 
For $u$, $v$ two finite words, their \term{concatenation} $uv$ is the word obtained by reading first $u$ and then $v$. 
In our context the point is that if an infinite word $w^\infty$ describes a periodic orbit on a Lorenz-like template and if $w$ is long enough, then the finite word $w$ may be decomposed as a concatenation $w=uv$ such that $u^\infty$ and $v^\infty$ also describe two periodic orbits of the same Lorenz-like template (Section~\ref{S:Reduction} below is hopefully self-contained, but one may also consult~\cite[Section 3.1.2]{GHS}). 
This phenomenon can be seen as a consequence of the existence of Markov partitions for Anosov flows~\cite{Ratner}.

We first detail the scheme of the proof in the case $3\le p\le q\le r$ in \S~\ref{S:Proof}, and prove the needed lemmas in the next subsections. 
Finally in \S~\ref{S:p=2} we adapt the proof to the case $p=2$.
For simplicity, we now write~$\lk$ instead of~$\lk_{\U\Spqr}$.

\subsection{Proof of Theorem~\ref{T:Main} in the case $p,q,r\ge 3$}
\label{S:Proof}

By Lemma~\ref{L:Anosov} it is enough to prove that the linking number of every pair of periodic orbits of the geodesic flow on~$\U\Spqr$ is negative. 
By Theorem~\ref{T:Template} it is then enough to prove that the linking number of every pair~$(\gamma, \gamma')$ of periodic orbits of the template~$\Tpqr$ is negative. 

The strategy is as follows. 
We use Equation~\eqref{Eq:lk} of Lemma~\ref{L:LinkingRed}.
First we show that the expression~$-\frac 1 2\cro(\gamma, \gamma')$ behaves subadditively when concatenating words (Lemma~\ref{L:Cutting}). 
Since the form $\Qred$ behaves addititively under concatenation, by Lemma~\ref{L:LinkingRed}, $\lk(\gamma, \gamma')$ also behaves subadditively. 
For $p,q,r$ fixed we can then restrict our attention to the set of extremal orbits, which are determined in Lemma~\ref{L:Extremal}: extremal orbits are encoded by the words $(a^{p-1}b)^ka^ib^j$ or  $(ab^{q-1})^ka^ib^j$ with $(i,j,k)\in([\![1,p-1]\!]\times[\![1,q-1]\!]\setminus\{(1,q-1),(p-1,1)\})\times[\![0,\frac{r-2}2]\!]$, or $(a^{p-1}b)^k(ab^{q-1})^l$ for $(k,l)$ in~$[\![1,\frac{r-2}2]\!]\times[\![1,\frac{r-2}2]\!]$.
 
We then show that the linking numbers of all pairs of such extremal orbits are negative. 
We cover all possibilities in four separate statements (the most critical case is covered by Lemma~\ref{L:wijk}).

\begin{proof}[]
\begin{tabular}{ r | c c c }
    & $(a^{p-1}b)^ka^ib^j$ & $(ab^{q-1})^ka^ib^j$ & $(a^{p-1}b)^k(ab^{q-1})^l$ \\
   \hline
   $(a^{p-1}b)^ka^ib^j$ & Lemma~\ref{L:wijk} & Lemma~\ref{L:Oppose} &  Lemma~\ref{L:Mixed}  \\
   $(ab^{q-1})^ka^ib^j$ &  & Lemma~\ref{L:wijk} &  Lemma~\ref{L:Mixed}  \\
   $(a^{p-1}b)^k(ab^{q-1})^l$ & & & Lemma~\ref{L:Mixed2} \\
\end{tabular}

\vspace{1mm}
The rest of the section is dedicated to proving these four lemmas.
\end{proof}

\noindent{\bf Remark.}
	For~$p,q,r$ fixed, Lemmas~\ref{L:wijk}, \ref{L:Oppose}, \ref{L:Mixed2} and \ref{L:Mixed} are statements that involve finitely many computations only. 
	On the one hand, the proofs we propose here are rather heavy and we are not fully satisfied with them. 
	On the other hand, the crossing number of a pair of orbits of a Lorenz-type template is easy to compute, therefore, using Equation~\eqref{Eq:lk}, the inequalities in the lemmas are easily checked. 
	We did so into a Sage\footnote{\url{http://www.sagemath.org}} worksheet available on our website\footnote{\url{	https://www-fourier.ujf-grenoble.fr/~dehornop/maths/ComputationsLinkingTpqr.sws}}. 	
	It took about 25 seconds on a laptop to check the validity of these lemmas for all $p\le 4, q\le5, r\le7$ and about 2 hours for all $p\le 6, q\le8, r\le10$.
	
\subsection{Reducing to a finite number of estimations}
\label{S:Reduction}

The next two results of this section imply that for proving that any two orbits of~$\Tpqr$ have negative linking number ($p,q,r$ being fixed), it is enough to restrict our attention to an explicit finite list of orbits.

\begin{definition}
	A \term{cut} of a finite word~$w$ in the alphabet~$\{a, b\}$ is a pair of words~$u,v$, called the \term{factors}, such that 
	\begin{itemize}
	\item $w$ is obtained by cyclic permutation of the letters of~$uv$, 
	\item $u$ and $v$ end with the letters $a$ and $b$ respectively, 
	\item $u$ and $v$ satisfy $u^\infty < v^\infty$, 
	\item no shift of $u^\infty$ or $v^\infty$ lies between $u^\infty$ and $v^\infty$ in the lexicographic order.
	\end{itemize}
	A cut is \term{admissible} if the two factors code orbits of~$\Tpqr$, that is, if all their shifts are between the kneading sequences given by Table~\ref{Table}.
\end{definition}

Graphically, if $w^\infty$ is the code of an orbit~$\gamma$, a cut correspond suppressing from $\gamma$ a corner of the bottom border of the diagram of~$\gamma$ (see Fig.~\ref{F:Cut}). 

\begin{figure}[ht]
	\label{D:Cut}
	\includegraphics*[width=.6\textwidth]{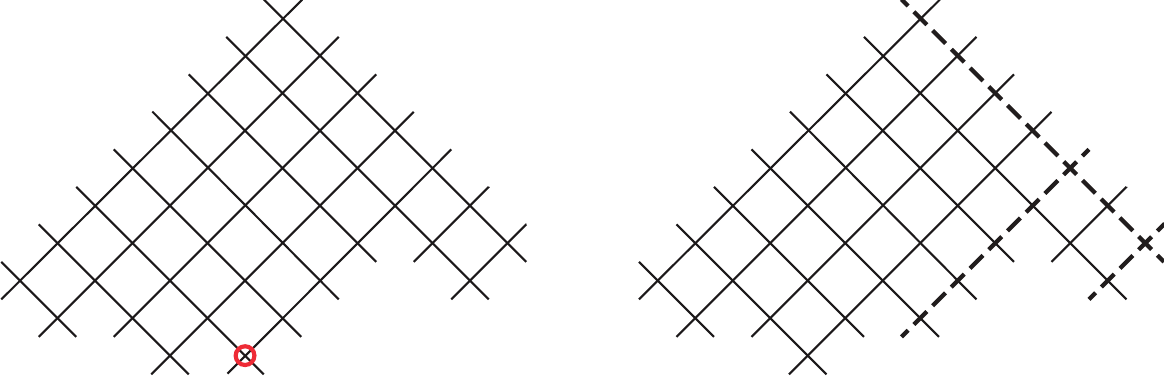}
 	\caption{\small A cut of the word $aa\vert abb\vert abbababbab$ (on the left) creates the two new words $abbababbabaa$ and $abb$ (on the right).}
  	\label{F:Cut}
\end{figure}

It may not be obvious how a cut could \term{not} be admissible. 
The point is that cutting an orbit creates two orbits, one which corresponds to a part of the original orbit and is moved slightly to the left, and the other one which is moved to the right. 
This moving could make one of the two new orbits exit the template. 

By extension, for $v,w$ two finite words, we denote by~$\cros(v^\infty,w^\infty)$ the crossing number of the two orbits of the Lorenz template that are coded by~$v^\infty$ and $w^\infty$. 
The key-property of cuts is

\begin{lemma}[Superadditivity of the crossing number]
	\label{L:Cutting}
	Assume that $u, v$ are the factors of a cut. 
	Then for every finite word~$x$, we have
	\[ \cros((uv)^\infty, x^\infty) \ge \cros(u^\infty, x^\infty) + \cros(v^\infty, x^\infty).\]
\end{lemma}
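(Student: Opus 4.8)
The plan is to reduce the inequality to a cancellation-free count of double points on the Lorenz template, and then to interpret the cut as a local smoothing of the diagram of $(uv)^\infty$ that can only suppress crossings with the third orbit.

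First I would set up the symbolic description of crossings. The branch line of the Lorenz template carries the periodic points of every orbit, and the map sending a point to its forward itinerary in $\{a,b\}^{\NN}$, ordered lexicographically with $a<b$, is an order isomorphism onto the set of codes; the first-return map acts as the shift~$\sigma$ and is increasing on each of the two ribbons. Running once through the ribbons realizes the Lorenz braid on the finitely many branch-line points of the two orbits, and two strands starting in the same ribbon keep their relative order while two strands in different ribbons cross exactly once, namely when the return map reverses their order. This yields
\[\cros(v^\infty,w^\infty)=\#\{(i,j)\in\ZZ/|v|\ZZ\times\ZZ/|w|\ZZ : \sigma \text{ reverses the relative order of } \sigma^i v^\infty \text{ and } \sigma^j w^\infty\}.\]
As recalled in the proof of Lemma~\ref{L:LinkingRed}, all crossings of the template carry the same sign, so this is a genuine, cancellation-free count of double points, which is what makes a smoothing argument legitimate.

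Next I would describe the cut at the level of branch-line points. The $|u|+|v|$ points of $(uv)^\infty$ split into a ``$u$-block'' (reading head inside the initial~$u$) and a ``$v$-block''. A short lexicographic computation, using $u^\infty<v^\infty$ together with the fact that $u$ ends in~$a$ and $v$ in~$b$, shows that in passing to $u^\infty\sqcup v^\infty$ each $u$-block point slides slightly to its left onto the corresponding point of~$u^\infty$, and each $v$-block point slightly to its right onto the corresponding point of~$v^\infty$. The hypothesis that no shift of $u^\infty$ or $v^\infty$ lies between $u^\infty$ and $v^\infty$ says precisely that these two points are adjacent in the combined point set of $u$- and $v$-shifts; geometrically this means the protruding factor at the cut corner (the tongue visible in Figure~\ref{F:Cut}) is an embedded arc that can be pushed off without meeting any other strand of $u^\infty\sqcup v^\infty$.

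The inequality would then follow from a smoothing argument. Away from a small disc around the cut corner the diagrams of $(uv)^\infty$ and of $u^\infty\sqcup v^\infty$ coincide, so their crossings with $x^\infty$ outside the disc are in bijection; inside the disc, $(uv)^\infty$ forms a corner whose two sides are the adjacent strands $u^\infty$ and $v^\infty$, while $u^\infty\sqcup v^\infty$ is its smoothing, and a strand of $x^\infty$ running through the disc meets the smoothed picture in at most as many points as the corner. The hard part will be to make ``the diagrams agree away from the corner'' and ``the smoothing removes crossings'' rigorous, because the leftward and rightward displacements of the block points are \emph{not} small relative to the points of~$x^\infty$: a priori an $x$-strand may thread the corner or lie between a point and its displaced image. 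I expect to control this through the formula above, computing for each branch-line point $P$ the number of $x$-points whose order with $P$ is reversed by $\sigma$ (these are the opposite-ribbon $x$-points on the appropriate side of $\sigma P$) and showing that the $u$-block and $v$-block contributions telescope so that the displacements can only delete such $x$-points. The role of the cut conditions is exactly to force this telescoping, i.e. to guarantee that the tongue is innermost, and verifying it against the position of the points of $x^\infty$ is the delicate, case-sensitive heart of the statement.
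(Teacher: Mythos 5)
Your setup is sound: the order-reversal formula for crossings on the Lorenz template is correct, the displacement directions of the branch-line points under the cut are correct, and this is essentially the same geometric picture the paper works with. But the proposal stops exactly where the lemma has content. You explicitly defer the key claim (``I expect to control this\dots'', ``the delicate, case-sensitive heart of the statement''), namely that passing from $(uv)^\infty$ to $u^\infty\sqcup v^\infty$ can only destroy crossings with $x^\infty$. That claim \emph{is} the lemma; announcing a telescoping scheme that would imply it is not a proof of it.

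Worse, the mechanism you propose cannot work in the per-point form you describe, because position-by-position monotonicity is false. Take the cut $u=ba$, $v=b$ of $w=bab$ (a legitimate cut: $u$ ends in $a$, $v$ ends in $b$, $(ba)^\infty<b^\infty$, and the only other shift $(ab)^\infty$ does not lie between them), and let $x=aabb$. The arc of $x^\infty$ from $(aabb)^\infty$ to $(abba)^\infty$ crosses the arc of $u^\infty$ from $(ba)^\infty$ to $(ab)^\infty$ (order reversal: $(aabb)^\infty<(ba)^\infty$ while $(abba)^\infty>(ab)^\infty$), but does \emph{not} cross the corresponding arc of $(uv)^\infty$ from $(bab)^\infty$ to $(abb)^\infty$ (no reversal, since $(abba)^\infty<(abb)^\infty$). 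So at this position the split \emph{creates} a crossing; one computes $\cros((bab)^\infty,x^\infty)=4$ and $\cros((ba)^\infty,x^\infty)+\cros(b^\infty,x^\infty)=4+0$, so the inequality survives only with equality, the deficit being repaid elsewhere by an arc of $x^\infty$ that crosses $(uv)^\infty$ twice while crossing $u^\infty$ once. This is precisely why the paper does not compare crossings letter by letter: it introduces the intermediate zone swept between $(uv)^\infty$ and $u^\infty\sqcup v^\infty$ and organizes the count per excursion of $x^\infty$ into that zone (Figure~\ref{F:EmbrassedCutting}), pairing each crossing with $u^\infty\sqcup v^\infty$ against crossings with $(uv)^\infty$ along the same excursion. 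That global, per-excursion accounting --- together with an argument that the zone's geometry forces the favorable configurations --- is the idea missing from your proposal, and without it (or a genuinely worked-out substitute) the argument does not go through.
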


\begin{proof}
	Call an \term{(elementary) arc} a piece of orbit of the template delimited by two (consecutive) intersections points with the branching arc of the template. 
	An elementary arc then corresponds to a letter of its code. 
	The orbit coded by~$(uv)^\infty$ is parallel and situated on the left of~$u^\infty$ for the first $\vert u\vert$ elementary arcs 
	and then parallel and to the right of~$v^\infty$ for the next~$\vert v\vert$ elementary arcs. 
	Thus the correspondence between the letters of~$uv$ and the letters of $u$ and $v$ induces a canonical correspondence between elementary arcs of $(uv)^\infty$ and elementary arcs of $u^\infty$ or of $v^\infty$. 
	Call the \term{intermediate zone} the zone situated between the elementary arcs of $(uv)^\infty$ 
	and the corresponding arcs of $u^\infty$ or of $v^\infty$ (in blue and green on Fig.~\ref{F:EmbrassedCutting}).

	First suppose that no arc of~$x^\infty$ travels in the intermediate zone. 
	Then an elementary arc of $x^\infty$ intersects an elementary arc of $(uv)^\infty$ 
	if and only if it intersects the corresponding elementary arc of $u^\infty$ or of $v^\infty$. 
	Therefore we have $\cros((uv)^\infty, x^\infty) = \cros(u^\infty, x^\infty) + \cros(v^\infty, x^\infty)$ in this case.

\begin{figure}[ht]
   	\begin{picture}(130,43)(0,0)
      	\put(0,0){\includegraphics*[width=.9\textwidth]{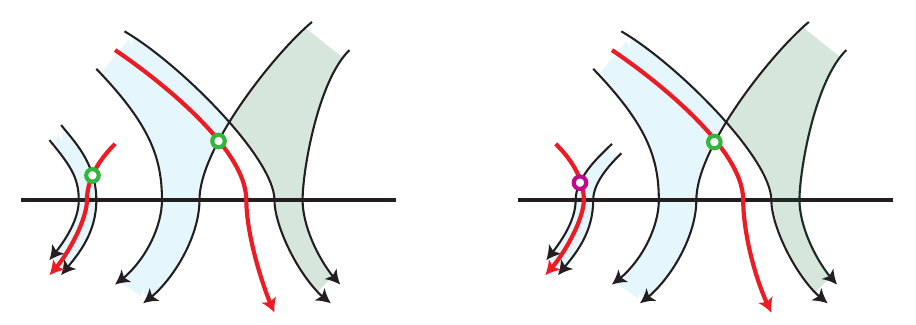}}
      	\put(13,3){$u^\infty$}
      	\put(20,0){$(uv)^\infty$}
      	\put(46,1){$(vu)^\infty$}
      	\put(51,5){$v^\infty$}
      	\put(-2,4){$\sigma^i(x^\infty)$}
      	\put(33,0){$\sigma^j(x^\infty)$}
   	\end{picture}
   	\caption{\small Proof of the superadditivity lemma: when an arc of $x^\infty$ (in red) visits 
	the region between $u^\infty$ and $(uv)^\infty$ (in blue), either it crosses $(uv)^\infty$ twice 
	and does not cross $u^\infty$ (on the left), or it crosses both orbits once (on the right).}
  	\label{F:EmbrassedCutting}
\end{figure}

	In the general case, an arc of~$x^\infty$ can only enter the intermediate zone by intersecting an arc of $(uv)^\infty$. 
	If it exists the zone by intersecting an arc of $u^\infty$ or~$v^\infty$, then these two points are canonically associated. 
	Otherwise it exits by cutting another arc of $(uv)^\infty$ 
	and then we get two more intersection points with $(uv)^\infty$ than with the union of $u^\infty$ and $v^\infty$. 
	In both cases, the inequality holds.
\end{proof}


Call a periodic orbit of the template~$\Tpqr$ \term{extremal} if it has no admissible cut. 
Such extremal orbits correspond to the \emph{elementary loops} of Fried~\cite{FriedCross}.

\begin{lemma}
	\label{L:Extremal}
	A periodic orbit of~$\Tpqr$ is extremal if and only if its code is of the form $a^ib^j(a^{p-1}b)^k$ or  $a^ib^j(ab^{q-1})^k$ with $(i,j,k)\in([\![1,p-1]\!]\times[\![1,q-1]\!]\setminus\{(1,q-1),(p-1,1)\})\times[\![0,\frac{r-2}2]\!]$, or $(a^{p-1}b)^k(ab^{q-1})^l$ for $(k,l)$ in~$[\![1,\frac{r-2}2]\!]\times[\![1,\frac{r-2}2]\!]$.
\end{lemma}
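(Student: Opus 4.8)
The plan is to turn the statement into finite combinatorics on the coding words and to weigh the \emph{cut} operation against the kneading bounds of Table~\ref{Table}. First I would make admissibility fully explicit. Reading off Table~\ref{Table}, a cyclic word codes an orbit of~$\Tpqr$ exactly when every maximal block of~$a$'s has length at most~$p-1$ and every maximal block of~$b$'s has length at most~$q-1$ (otherwise some shift would drop below~$u_L$ or rise above~$v_R$), together with the finer requirement that every shift beginning with~$a$ lie lexicographically between~$u_L$ and~$u_R$ and every shift beginning with~$b$ between~$v_L$ and~$v_R$. Granting the block bounds, the only extra force of the kneading condition is to cap the number of consecutive syllables $a^{p-1}b$ and $ab^{q-1}$: since $u_L=\big((a^{p-1}b)^{(r-3)/2}a^{p-1}b^2\big)^\infty$ one checks that $(a^{p-1}b)^\infty<u_L$, so the pure power $(a^{p-1}b)^k$ is admissible only up to $k\le (r-2)/2$, and symmetrically $(ab^{q-1})^k$ against~$v_R$. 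I would record all of this as a short list of forbidden factors, treating the parities of~$r$ separately.

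Next I would localize cuts. I claim that a cut is the same datum as the choice of one \emph{peelable corner} of the cyclic syllable decomposition $a^{s_1}b^{t_1}\cdots a^{s_m}b^{t_m}$: the factor~$v$ is the arc carrying that corner and~$u$ the complementary arc, and the requirement that no shift lie between~$u^\infty$ and~$v^\infty$ selects precisely the corners realising a strict local extremum of the shift order. This converts the assertion ``$w$ has an admissible cut'' into the assertion ``some peelable corner of~$w$ leaves both arcs within the bounds above''.

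With these tools the two implications run in parallel. For a word not on the list I would exhibit an admissible cut: if $m=1$ the word is a single syllable $a^ib^j$, which is in the first family with $k=0$ unless $(i,j)\in\{(p-1,1),(1,q-1)\}$, and these two exceptions are exactly the pure powers $(a^{p-1}b)^\infty$ and $(ab^{q-1})^\infty$, which are not admissible at all; if $m\ge 2$ and the syllables are not arranged as in the third family, I peel the corner at a strict local extremum, which shortens~$w$ by one syllable, automatically respects the block bounds, and (because the removed syllable is extremal) keeps the longer arc below the $u_L$/$v_R$ caps, so both arcs stay admissible. Iterating shows every non-listed admissible word is non-extremal. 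Conversely, for each of the three families I would run through every peelable corner and show that one arc leaves the admissible region: peeling a maximal block $(a^{p-1}b)^k$ or $(ab^{q-1})^k$ forces the larger arc either to create an over-long run or to gain one extra copy of $a^{p-1}b$ or $ab^{q-1}$, pushing its relevant shift strictly past~$u_L$ or~$v_R$. The two index pairs excluded from the first two families are handled here as well: $(p-1,1)$ produces a non-admissible pure power, while $(1,q-1)$ merely reproduces cyclically a third-family word, so both are correctly omitted.

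The main obstacle is the combination of the localization step with the overflow analysis: making rigorous the dictionary between cuts (a lexicographic condition) and peelable corners, and then checking case by case that peeling pushes an arc across one of the four thresholds $u_L,u_R,v_L,v_R$. This bookkeeping is sensitive to the parity of~$r$ and, in the regime $p=2$, to the fact that the exit side of the left ribbon sits strictly inside the entrance side of the right ribbon, so the caps and the forbidden-factor list must be recomputed --- which is exactly why the case $p=2$ is postponed to its own treatment in~\S\ref{S:p=2}. Once this lemma is in place, Lemma~\ref{L:Cutting} reduces Theorem~\ref{T:Main} to the finitely many pairs of extremal orbits.
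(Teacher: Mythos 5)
Your proposal hinges on the ``dictionary'' asserting that a cut of $w$ is the same thing as the choice of one peelable corner of the cyclic syllable decomposition, so that one factor is always a single syllable. That claim is false, and both halves of your argument lean on it. First, cuts need not respect the syllable decomposition at all: in the paper's own example (Figure~\ref{F:Cut}), the factor $ab^2$ is carved out of the block $a^3b^2$ of $w$. Second, both factors of a cut can be long: for $w=(a^3b)(ab)(a^2b)(ab)$ the pair $u=aababa$, $v=aabab$ satisfies all four conditions of the definition (one checks $uv$ is a cyclic permutation of $w$, $u$ ends with $a$, $v$ ends with $b$, $u^\infty<v^\infty$, and no shift of either lies strictly between them), i.e.\ $w$ has a \emph{balanced} cut into $(a^3b)(ab)$ and $(a^2b)(ab)$. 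Third, and fatally for the direction of the lemma that Theorem~\ref{T:Main} actually needs, consider $w=(a^{p-1}b)^{k_1}a^ib^j(a^{p-1}b)^{k_2}a^{i'}b^{j'}$ with $(i,j)\neq(i',j')$, where $k_1$ and $k_2$ are each at most the kneading cap but $k_1+k_2$ exceeds it. This word is admissible and is not on the list, so it must admit an admissible cut; yet \emph{no} single-syllable peel of it is admissible: peeling $a^ib^j$ (or $a^{i'}b^{j'}$) cyclically merges the two runs into $(a^{p-1}b)^{k_1+k_2}$, which overflows the cap; peeling one syllable $a^{p-1}b$ produces the factor $a^{p-1}b$, which is not an orbit of $\Tpqr$ since $(a^{p-1}b)^\infty<u_L$; and any peel splitting a block creates an over-long run of $a$'s. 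The only admissible cuts of this word are balanced ones, such as $(a^{p-1}b)^{k_1}a^ib^j$ together with $(a^{p-1}b)^{k_2}a^{i'}b^{j'}$, which your recipe can never produce. The same defect undermines your converse direction: ruling out corner peels cannot certify extremality, since one must also exclude balanced cuts whose factors are perfectly admissible (e.g.\ splittings of $(a^{p-1}b)^k(ab^{q-1})^l$ into $(a^{p-1}b)^{k_1}(ab^{q-1})^{l_1}$ and $(a^{p-1}b)^{k_2}(ab^{q-1})^{l_2}$); there the obstruction is the ordering requirement ($u$ ends with $a$, $v$ ends with $b$, $u^\infty<v^\infty$) in the definition of a cut, which your framework never engages.

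For comparison, the paper does not attempt to classify cuts. It travels along the orbit starting from its leftmost intersection with the branch line, which is divided into subsegments indexed by the initial syllable, and produces a cut at the first moment the orbit fails to be leftmost in its current subsegment; the two crossing arcs then lie in the same subsegment, so the factors are unions of whole syllables of $w$ and remain admissible. When $w$ contains syllables $a^{p-1}b$ or $ab^{q-1}$, the tracking is performed while ignoring the maximal runs of such syllables, which is exactly what guarantees that the cut never lands inside a run nor merges two runs --- the point your cap bookkeeping would have to, but does not, address; this is also why balanced cuts appear unavoidably. Note finally that the paper needs and proves only the implication ``extremal $\Rightarrow$ listed''; the converse, where your gap is most visible, is not used in the proof of Theorem~\ref{T:Main}.
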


\begin{proof}
	Let $\gamma$ be an extremal orbit and denote by~$w$ its code.
	By Theorem~\ref{T:Template} and Table~\ref{Table}, 
	the word $w$ does not contain more than~$p-1$ consecutive~$a$ and $q-1$ consecutive~$b$. 
	Also it does not contain more than~$\frac{r-2}2$ consecutive blocks of the form $a^{p-1}b$ or $ab^{q-1}$.

	First suppose that~$w$ contains no syllable of the form~$a^{p-1}b$ or $ab^{q-1}$.	
	Decompose the axis of the template into the union of segments, corresponding to orbits starting with 
	$a^{p-1}b$, $a^{p-2}b, \dots, ab$, 
	$ba, b^2a, \dots, b^{q-2}a$, $b^{q-1}a$ (see Fig.~\ref{F:Division}).
\begin{figure}[ht]
   	\begin{picture}(80,50)(0,0)
      	\put(0,0){\includegraphics*[width=.5\textwidth]{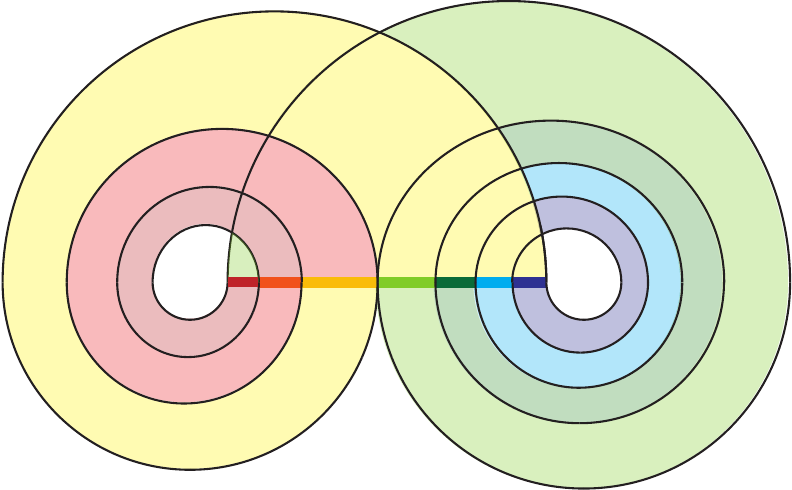}}
   	\end{picture}
   	\caption{\small Division of the axis of~$\Tpqr$ into segments corresponding to orbits starting with $a^3b, a^2b, ab, ba, b^2a, b^3a$, and $b^4a$ here.}
  	\label{F:Division}
\end{figure}
	Now travel along~$\gamma$ starting from the leftmost point  and consider the points where it intersects the axis of the template. 
	By the hypothesis that~$\gamma$ is the leftmost orbit in the corresponding subsegment, then either $\gamma$ is always the leftmost orbit of each of the visited subsegments, in which case $\gamma$ comes back to its initial point after having visited at most once every subsegment, and in this case~$w$ is of the form~$a^ib^j$. 
	Or at some point~$\gamma$ stops being the leftmost orbit in the corresponding subsegment, which means that some arc comes from the right to pass over~$\gamma$ and sits just to the left of~$\gamma$ in the corresponding subsegment. 
	This determines a place to cut~$\gamma$. 
	Since the two arcs that intersect at that cut are in the same subinterval, they correspond to letters of~$w$ that are followed by the same number of letters of the same type. 
	Therefore cutting at this place amounts to factor $w=a^{i_1}b^{j_1}\dots a^{i_n}b^{j_n}$ into $a^{i_k}b^{j_k}\dots a^{i_l}b^{j_l}$ and $a^{i_{l+1}}b^{j_{l+1}}\dots a^{i_{k-1}}b^{j_{k-1}}$. 
	These two words still encode orbits of the template, so the cut is admissible.
		
	Now allow~$w$ to contain syllables of the form~$a^{p-1}b$ or~$ab^{q-1}$. 
	We apply the same strategy for finding an admissible cut works, except that we have to forget about the blocks of the form~$(a^{p-1}b)^k$ or~$(ab^{q-1})^k$. 
	Namely, we follow the orbit~$\gamma$ encoded by~$w$ starting from its leftmost point, but we consider only the relative place of~$\gamma$ in the corresponding subsegments when we are visiting syllables of the form~$(a^{p-1}b)^k$ or~$(ab^{q-1})^k$. 
	One still finds a cut. 
	By our assumption, this cut takes place between two letters of~$\gamma$ not belonging to a syllable of the form~$(a^{p-1}b)^k$ or~$(ab^{q-1})^k$.		
\end{proof}

\subsection{Estimating $\lk(a^ib^j, a^{i'}b^{j'})$}

Since any word can be decomposed as a product of syllables of the form~$a^ib^j$, using the subbativity of linking numbers (Lemma~\ref{L:Cutting}), it is enough for proving Theorem~\ref{T:Main} to show that $\lk(a^ib^j, a^{i'}b^{j'})$ is negative for all possible values of $(i,j,i',j')$. 
Unfortunately, this is not the case: using Lemma~\ref{L:LinkingRed}, one sees that $\Dpqr\lk (a^{p-1}b, a^ib^j) = qi - jp$ and in particular we have $\lk(a^{p-1}b, a^{p-1}b) =(pq-p-q)/\Dpqr>0$. 
This formula is not a surprise since the code $a^{p-1}b$ actually corresponds to a $+2\pi/r$-rotation around the order~$r$-point of~$\Spqr$, hence the curve with code $(a^{p-1}b)^r$ is isotopic to a fiber, and we know that the linking number between two fibers is positive (by Example~\ref{Ex}).
However, the next two lemmas state that this situation is almost the only bad case.

\begin{lemma}[easy case]
	\label{L:EasyCase}
	If $i<i', j< j'$, then $\lk(a^ib^j, a^{i'}b^{j'})$ is negative.
\end{lemma}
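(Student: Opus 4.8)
The plan is to apply Lemma~\ref{L:LinkingRed} directly, since the two orbits in question are the single syllables $a^ib^j$ and $a^{i'}b^{j'}$, and under the hypothesis $i<i'$, $j<j'$ the crossing number term will already be large enough to dominate the quadratic form term. First I would compute $\cro(a^ib^j, a^{i'}b^{j'})$. Each syllable, viewed as an orbit of the Lorenz template, travels $i$ (resp.\ $i'$) arcs on the left ribbon and $j$ (resp.\ $j'$) arcs on the right ribbon. Because all crossings on $\Tpqr$ are negative, the quantity $-\frac12\cro$ contributes with the correct (negative) sign; the entire difficulty is to show it outweighs $\frac1\Dpqr\Qred$.

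Next I would make the form term explicit. Writing $\nombre a\gamma=i$, $\nombre b\gamma=j$ and $\nombre a{\gamma'}=i'$, $\nombre b{\gamma'}=j'$, Lemma~\ref{L:LinkingRed} gives
\[
	\Dpqr\lk(a^ib^j, a^{i'}b^{j'}) = -\tfrac{\Dpqr}2\cro(a^ib^j, a^{i'}b^{j'}) + (qr-q-r)\,ii' - r\,(ij'+ji') + (pr-p-r)\,jj'.
\]
The positive contributions come from the diagonal terms $(qr-q-r)ii'$ and $(pr-p-r)jj'$, while the cross term $-r(ij'+ji')$ is negative and helps. Since $i,i'\le p-1$ and $j,j'\le q-1$, I can bound each diagonal term crudely, e.g.\ $(qr-q-r)ii'\le (qr-q-r)(p-1)i'$ and similarly for the other. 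The hypothesis $i<i'$, $j<j'$ is what I would exploit to get a clean crossing-number lower bound: two ``nested'' syllables of this type, with strictly larger exponents in both letters, are forced to cross many times on the Lorenz template, and I expect $\cro$ to grow like a product of the exponents.

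The main obstacle will be getting a sharp enough lower bound on $\cro(a^ib^j,a^{i'}b^{j'})$ to beat the quadratic form. The cleanest route is combinatorial: on the Lorenz template the crossing number of two periodic orbits can be read off from the number of pairs of arcs whose cyclic orders along the branch line are exchanged, so I would count inversions between the two codes. With the strict inequalities $i<i'$ and $j<j'$, every one of the $i$ left-arcs of the shorter orbit is ``overtaken'' by the extra left-arcs of the longer one, and likewise on the right, producing a crossing count on the order of $ij'+ji'$ (the same combination that appears, with coefficient $r$, in $\Qred$). Once that lower bound is in hand, the inequality $-\frac12\Dpqr\cro + \Qred<0$ reduces to an elementary comparison of coefficients using $p,q,r\ge 3$ and $\Dpqr = pqr-pq-qr-pr$; I would finish by checking that the surviving positive terms $(qr-q-r)ii'+(pr-p-r)jj'$ are swamped by the crossing contribution, so that $\lk(a^ib^j,a^{i'}b^{j'})<0$.
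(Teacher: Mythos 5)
Your overall scheme (apply Lemma~\ref{L:LinkingRed}, get a lower bound on the crossing number, and show the $-\frac12\cro$ term beats the form term) is the same as the paper's, but the crucial quantitative step is wrong, and wrong in a way that sinks the argument. Under the hypothesis $i<i'$, $j<j'$ the two orbits are \emph{nested}: on the Lorenz template the orbit $a^{i'}b^{j'}$ runs parallel to and outside of $a^ib^j$, and the exact crossing number is $\cro(a^ib^j,a^{i'}b^{j'})=2(i+j)$ --- linear in the exponents of the \emph{smaller} word (this is the paper's Figure~\ref{F:Crossing} computation; compare the other configuration $i\le i'$, $j\ge j'$, where it is $2(i+j'-1)$). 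Your intuition that strictly larger exponents in both letters force ``many'' crossings, on the order of $ij'+ji'$, is backwards: nesting minimizes crossings. For instance with $(i,j,i',j')=(2,2,3,3)$ the true count is $8$, not of the order of $ij'+ji'=12$, and the gap grows with the exponents. Any proof built on a product-type lower bound for $\cro$ is therefore unsound.

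Moreover, even after substituting the correct value $\cro=2(i+j)$, the conclusion is not an ``elementary comparison of coefficients'': the crossing contribution $-(i+j)\Dpqr$ is linear while $\Qred$ contains the positive quadratic terms $(qr-q-r)ii'$ and $(pr-p-r)jj'$, so negativity is genuinely delicate. It holds only because $i'\le p-1$ and $j'\le q-1$ keep factors $p-i'$ and $q-j'$ positive; the paper exploits this by rewriting
\[
\Dpqr\,\lk(a^ib^j,a^{i'}b^{j'})=-iqr\Bigl(1-\tfrac1q-\tfrac1r-\tfrac{j}{iq}\Bigr)(p-i')-jpr\Bigl(1-\tfrac1p-\tfrac1r-\tfrac{i}{jp}\Bigr)(q-j'),
\]
and then checking each factor is nonnegative, which requires the normalization $p\le q\le r$, a separate discussion of the boundary cases $i=1$ or $j=1$, and special treatment of the triples $(i,q,r)$ or $(j,p,r)$ equal to $(2,3,4)$ or $(2,3,5)$. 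The inequality is essentially sharp (it degenerates to equality exactly at the non-hyperbolic parameters $p=q=r=3$), so the crude bounds you propose (e.g.\ $(qr-q-r)ii'\le(qr-q-r)(p-1)i'$) cannot close the estimate; you need either this factorization or an equally careful case analysis of the extreme values of $(i,j,i',j')$.
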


\begin{proof}
	Figure~\ref{F:Crossing} shows that we have $\cro(a^ib^j, a^{i'}b^{j'}) =2 (i+j)$ in this case.

\begin{figure}[ht]
     	\includegraphics*[width=.6\textwidth]{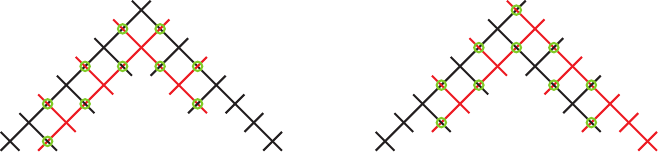}
  	\caption{\small Crossing number between two orbits of codes $a^ib^j$ (red) and $a^{i}b^{j'}$ (black), with $i<i', j<j'$ on the left and $i\le i', j\ge j'$ on the right. The double points are encircled, they are $2(i+j)$ on the left and $2(i+j'-1)$ on the right.}
  	\label{F:Crossing}
\end{figure}

	By Lemma~\ref{L:LinkingRed}, we have $\Dpqr\lk(a^ib^j, a^{i'}b^{j'}) = -iqr (1-\frac 1 q - \frac 1 r - \frac j {iq})(p-i') - jpr (1-\frac 1 p - \frac 1 r - \frac i {jq})(q-j')$.

	Suppose $i, j\ge 2$ and $(i,q,r), (j,p,r)\neq (2,3,4), (2,3,5)$.
	Since $j<q$, we have $\frac j {iq} < \frac 1 i$, and therefore $1-\frac 1 q - \frac 1 r - \frac j {iq} > 1-\frac 1 q - \frac 1 r - \frac 1{i} \ge 0$. 
	Since $i'<p$, the term $-iqr (1-\frac 1 q - \frac 1 r - \frac j {iq})(p-i')$ is always negative. 
	Similarly for the term $- jpr (1-\frac 1 p - \frac 1 r - \frac i {jq})(q-j')$. 
	Then $\lk(a^ib^j, a^{i'}b^{j'})$ is the sum of two negative terms, hence is negative 

	If $i=1$, since $j\le q-2$, we have $1-\frac 1 q - \frac 1 r - \frac j {iq} = 1-\frac {j+1} q - \frac 1 r \ge 1-\frac {q-1}q - \frac 1 r = \frac 1 q - \frac 1 r$. 
	Using the assumption $q\le r$, we still have a non-positive term. 
	The same holds for $j=1$. 
	Note that the equality holds only for $j=q-2$ ({\it resp.} $i=p-2$), 
	therefore the sum can be equal to~$0$ only if $i=j=1=p-2=q-2$ and $p=q=r$, which forces $p=q=r=3$. 
	These values do not correspond to a hyperbolic orbifold. 

	In the case $(i,q,r) = (2,3,4)$ or $(2,3,5)$, since $j<j'<q$, we necessarily have $j=1, j'=2$. 
	In this case, an easy computation leads to $\Dpqr\lk(a^ib^j, a^{i'}b^{j'}) = 6i'-9p+12$. Since $i'\le p-1$, we have $6i'-9p+12 \le -3p+6 <0$. 
	The case $(j,p,r) = (2,3,4)$ or $(2,3,5)$ is treated in the same way.

	Therefore, for $i<i', j<j'$, we always have $\lk(a^ib^j, a^{i'}b^{j'}) < 0$.
\end{proof}

\begin{lemma}[hard case]
	\label{L:HardCase}
	If $i\le i', j\ge j'$, then $\lk(a^ib^j, a^{i'}b^{j'})$ is negative, except if $(i,j)=(1,q-1)$ or $(i',j')=(p-1,1)$. 
\end{lemma}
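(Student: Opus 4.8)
The plan is to turn the statement into the explicit inequality produced by Lemma~\ref{L:LinkingRed}. When $i\le i'$ and $j\ge j'$ the two orbits are no longer nested, and the right-hand part of Figure~\ref{F:Crossing} reads $\cro(a^ib^j,a^{i'}b^{j'})=2(i+j'-1)$. Substituting this and the letter counts into Equation~\eqref{Eq:lk} gives
\[
	\Dpqr\,\lk(a^ib^j,a^{i'}b^{j'}) = -\Dpqr(i+j'-1) + (qr-q-r)ii' - r(ij'+i'j) + (pr-p-r)jj',
\]
and the whole point is to prove that this is negative outside the two stated families. I would first record what happens on the exceptions: setting $(i,j)=(1,q-1)$ collapses the formula to $\Dpqr\,\lk=pj'-qi'$, while setting $(i',j')=(p-1,1)$ gives $\Dpqr\,\lk=qi-jp$, the value already computed before Lemma~\ref{L:EasyCase}. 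Both expressions can be positive, which is exactly why these families must be excluded.

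For the remaining configurations I would exploit the algebraic shape of the right-hand side. The identities $(qr-q-r)i'-\Dpqr=qr-(qr-q-r)(p-i')$ and $(pr-p-r)j-\Dpqr=pr-(pr-p-r)(q-j)$ let me rewrite it as
\[
	-\,i(qr-q-r)(p-i') \;-\; j'(pr-p-r)(q-j) \;+\; r\bigl(iq+pj'-ij'-i'j\bigr) + \Dpqr,
\]
whose first two summands are $\le 0$ since $1\le i'\le p-1$, $1\le j'\le j\le q-1$, and $qr-q-r,\,pr-p-r>0$ in the hyperbolic range. The task then becomes to show that these two negative products beat the remainder. The decisive structural remark is that the quantity above is \emph{affine} in $i'$ for fixed $j$, with slope $(qr-q-r)i-rj$, and \emph{affine} in $j$ for fixed $i'$; being multi-affine on the rectangle $i'\in[i,p-1]$, $j\in[j',q-1]$, it attains its maximum at one of the four corners, where $i'\in\{i,p-1\}$ and $j\in\{j',q-1\}$. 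This collapses the infinite family to the four corner pairs, in which the first word is $a^ib^{j'}$ or $a^ib^{q-1}$ and the second is $a^ib^{j'}$ or $a^{p-1}b^{j'}$.

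Each corner I would then dispatch by the elementary estimates of Lemma~\ref{L:EasyCase}: bounding every factor with the help of $q\le r$, the ranges of the exponents, and hyperbolicity $\frac1p+\frac1q+\frac1r<1$, with the finitely many sporadic small triples checked by hand. When $i\ge2$ and $j'\ge2$ none of the four corner words is a rotation word $a^{p-1}b$ or $ab^{q-1}$, every corner is strictly negative, and multi-affinity then gives $\lk<0$ on the whole rectangle. The strips $i=1$ and $j'=1$ --- which is exactly where the excluded families $(i,j)=(1,q-1)$ and $(i',j')=(p-1,1)$ sit --- I would treat separately, reducing in the remaining variables just as the $i=1$ and $j=1$ subcases were handled in Lemma~\ref{L:EasyCase}.

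The main obstacle is that $\Qred$ is positive definite: its matrix has positive trace and determinant $\Dpqr(r-1)>0$, so the second term of Equation~\eqref{Eq:lk} always drives the linking number upward. Negativity is therefore a genuine competition, won only by the crossing contribution $-\tfrac12\cro$, and the competition is tightest precisely at the rotation words $a^{p-1}b$ and $ab^{q-1}$. The delicate step is thus to prove that at every boundary configuration lying just outside the two excluded families the crossing term still strictly outweighs the positive-definite form $\Qred$.
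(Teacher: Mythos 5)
Your setup is correct and coincides with the paper's: the formula you derive from Equation~\eqref{Eq:lk} with $\cro(a^ib^j,a^{i'}b^{j'})=2(i+j'-1)$ is exactly the one used there, your computation of the values $pj'-qi'$ and $qi-pj$ on the two excluded families is right (and correctly explains why they are excluded), your determinant identity $\det \Qred=\Dpqr(r-1)$ is true, and your observation that the expression is affine in each variable separately, hence maximized at corners of the parameter box, is precisely the paper's ``by linearity'' reduction --- the paper reduces over $(i',j')$ to the extremal points $(i,j)$, $(i,1)$, $(p-2,1)$, $(p-1,2)$, $(p-1,j)$ of the constrained domain and then again over $(i,j)$, while you reduce over $(i',j)$ first; same mechanism, different order of variables.

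But there is a genuine gap: you never prove that any corner value is negative. The assertion ``every corner is strictly negative'' is exactly the content of the lemma after the reduction, and in the paper it occupies thirteen subcases (Cases 1.1--3.6), each resting on a non-obvious factorization such as $-pqr+pq+2pr+2qr-p-q-4r=-(p-2)(q-2)(r-2)-(p-3)(q-3)+1$ or $-pqr+pq+pr+2qr+p-2q-2r=-(p-2)(q-1)(r-1)+4$. Moreover, your plan to dispatch the corners ``by the elementary estimates of Lemma~\ref{L:EasyCase}'' cannot work as stated: those estimates are tied to the configuration $i<i'$, $j<j'$, whose crossing number is $2(i+j)$, whereas every one of your corner pairs --- e.g.\ $\bigl(a^ib^{q-1},a^{p-1}b^{j'}\bigr)$ --- still sits in the hard configuration with crossing number $2(i+j'-1)$, so each corner needs its own sign computation of the type the paper performs. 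The same objection applies to the strips $i=1$ and $j'=1$, which you defer entirely; these are precisely where the near-exceptional points $(p-2,1)$ and $(p-1,2)$ enter and where the paper's Cases 2.3--2.5 and 3.1--3.6 do the delicate work. Your closing paragraph in fact concedes that this competition between $-\tfrac12\cro$ and the positive definite form $\Qred$ is ``the delicate step''; as it stands, the proposal is a correct reduction scheme, not a proof, because that step is only announced, never carried out.
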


\begin{proof}
	Looking at Figure~\ref{F:Crossing}, we now have $\cro(a^ib^j, a^{i'}b^{j'}) =2( i+j'-1)$.

	Then $\Dpqr\lk(a^ib^j, a^{i'}b^{j'}) = -[(qr-q-r)i-rj'](p-i') - [(pr-p-r)j'-ri](q-j) - r(i'-i)(j-j') + (pqr-pq-pr-qr)$. The expression is linear in the four variables $i,j,i',j'$. 

	In order to prove the lemma, we have to prove that on the extremal points of the region 
	$D_{i,j,i',j'}:=\{(i,j,i',j')\vert i\le i', j\ge j'\}\setminus \{(1,q-1,*,*)\}\cup\{(*,*,p-1,1)\}$, 
	the number $\Dpqr\lk(a^ib^j, a^{i'}b^{j'})$ is negative (a point of~$D_{i,j,i',j'}$ being \emph{not} extremal if it lies between two points of the domain that share 3 coordinates).

	By linearity, for each value of $(i,j)$, the extremum is reached when $(i',j')$ is one of the five points~$(i,j), (i,1), (p-2,1), (p-1,2), (p-1,j)$. 
	By symmetry, we can restrict ourselves to the values $(i,j), (i,1)$, and~$(p-2,1)$.

	\begin{itemize}
	\item
	\emph{Case 1: $(i',j')=(i,j)$.} $\Dpqr\lk(a^ib^j, a^{i'}b^{j'})$ is then equal to $$(qr-q-r)i^2 + (pr-p-r)j^2 -2rij -\Dpqr(i+j-1).$$
	Since the coefficients of $i^2$ and $j^2$ are positive, the maximum is reached on the boundary of the domain, 
	hence we only have to check the points $(i,j) = (1,1), (p-2,1), (p-1,2), (p-1,q-1), (2, q-1), (1, q-2)$. By symmetry, we can actually only consider $(1,1)$ and $(p-2,1)$. 

		\begin{itemize}
		\item
		\noindent\emph{Case 1.1: $(i,j,i',j')=(1,1,1,1)$}.
		We have $\Dpqr\lk(a^1b^1, a^{1}b^{1}) = -pqr +pq+2pr+2qr-p-q-4r = -(p-2)(q-2)(r-2) - (p-3)(q-3) +1$. 
		The first term is always smaller than $-1$, the second is non-positive, so the sum is negative. 

		\item
		\noindent\emph{Case 1.2: $(i,j,i',j')=(p-2,1,p-2,1)$}.
		We have $\Dpqr\lk(a^{p-2}b^1, a^{p-2}b^{1}) = -pqr +2pq+pr+2qr-p-4q-r = -(p-2)(q-2)(r-2) - (p-3)(r-3) +1$, which is negative for the same reason as in Case 1.1.

		\end{itemize}

	\item
	\emph{Case 2: $(i',j')=(i,1)$.} $\Dpqr\lk(a^ib^j, a^{i'}b^{1})$ is then equal to $$(pr-p-r)i^2 -\Dpqr i +(pr-p-r)j-ri(j+1).$$
	Since the coefficient of $i^2$ is positive, the maximum is reached on the boundary of the domain, 
	hence we only have to check the points $(i,j) = (1,1), (p-2,1), (p-2,q-1), (2, q-1), (1, q-2)$. (There is no more symmetry.)

		\begin{itemize}
		\item
		\noindent\emph{Case 2.1: $(i,j,i',j')=(1,1,1,1)$}. This case is similar to Case 1.1.

		\item
		\noindent\emph{Case 2.2: $(i,j,i',j')=(p-2,1,p-2,1)$}. This case is similar to Case 1.2.

		\item
		\noindent\emph{Case 2.3: $(i,j,i',j')=(p-2,q-1,p-2,1)$}. 
		We have $\Dpqr\lk(a^{p-2}b^{q-1}, a^{p-2}b^{1}) = -pqr +pq+pr+3qr+p-4q-r = -(p-3)(q-1)(r-2) - (p-2)(q-3)$, which is the sum of two non-positive terms.

		\item
		\noindent\emph{Case 2.4: $(i,j,i',j')=(2,q-1,2,1)$}. 
		We have $\Dpqr\lk(a^{2}b^{q-1}, a^{2}b^{1}) = -pqr +pq+pr+3qr+p-4q-r = -(p-3)(q-1)(r-2) - (p-2)(q-3)$, which is the sum of two non-positive terms.

		\item
		\noindent\emph{Case 2.5: $(i,j,i',j')=(1,q-1,1,1)$}. We have $\Dpqr\lk(a^{1}b^{q-1}, a^{1}b^{1}) = pr+2p-q+2r = -(p-2)(r-2)-q+4$, which is negative.

		\end{itemize}

	\item

	\emph{Case 3: $(i',j')=(p-2,1)$.} $\Dpqr\lk(a^ib^j, a^{i'}b^{1})$ is then equal to $$-(qr-2q-r)i+(r-p)j.$$
	By linearity, we only have to check the points $(i,j) = (1,1), (p-2,1), (p-1,2)$, $(p-1,q-1), (2, q-1), (1, q-2)$. 
		\begin{itemize}
		\item
		\noindent\emph{Case 3.1: $(i,j,i',j')=(1,1,p-2,1)$}. This case is symmetric to Case 2.5.

		\item
		\noindent\emph{Case 3.2: $(i,j,i',j')=(1,q-2,p-2,1)$}. 
		We have $\Dpqr\lk(a^{1}b^{q-2}, a^{p-2}b^{1}) = -pq+2p+2q-r = -(p-2)(q-2) -r+4$, which is negative.

		\item
		\noindent\emph{Case 3.3: $(i,j,i',j')=(2,q-1,p-2,1)$}. 
		We have $\Dpqr\lk(a^{2}b^{q-1}, a^{p-2}b^{1}) = -pq-qr +p+4q+r = -(q-1)(p+r-4) +4$, which is negative.

		\item
		\noindent\emph{Case 3.4: $(i,j,i',j')=(p-1,q-1,p-2,1)$}. 
		We have $\Dpqr\lk(a^{p-1}b^{q-1}, a^{p-2}b^{1}) = -pqr +pq+pr+2qr+p-2q-2r = -(p-2)(q-1)(r-1) +4$, which is negative.

		\item
		\noindent\emph{Case 3.5: $(i,j,i',j')=(p-1,2,p-2,1)$}. 
		We have $\Dpqr\lk(a^{p-1}b^{2}, a^{p-2}b^{1}) = -pqr +2pq+pr+qr-2p-2q+r = -(p-1)(q-1)(r-2) +4$, which is negative.

		\item
		\noindent\emph{Case 3.6: $(i,j,i',j')=(1,q-1,1,1)$}. This case is the same as Case 2.5.
		\end{itemize}
	\end{itemize}
\vspace{-5mm}
\end{proof}

\subsection{Estimating $\lk((a^{p-1}b)^ka^ib^j, (a^{p-1}b)^{k'}a^{i'}b^{j'})$}

As we have seen, syllables of the form $a^{p-1}b$ may contribute positively to the linking number. However, using the information we have on the admissible codes, we know that there cannot be more than $\frac{r-2}2$ consecutive such syllables. 
By the subadditivity of the linking number and using the expression $\Dpqr\lk (a^{p-1}b, a^ib^j) = qi - jp$, we have the inequality 
	\begin{eqnarray*}
	\Dpqr\lk((a^{p-1}b)^ka^ib^j, (a^{p-1}b)^{k'}a^{i'}b^{j'})
	&\le& \Dpqr kk' \lk(a^{p-1}b,a^{p-1}b) +\Dpqr k \lk(a^{p-1}b, a^{i'}b^{j'}) \\
	&&+ \Dpqr k' \lk(a^{p-1}b, a^ib^j) + \Dpqr \lk(a^ib^j, a^{i'}b^{j'})\\
	&=& kk'(pq-p-q) + k(qi'-pj')+k'(qi-jp)\\
	&&+\Dpqr\lk(a^ib^j, a^{i'}b^{j'}).
	\end{eqnarray*}
Unfortunately, the term $\Dpqr \lk(a^ib^j, a^{i'}b^{j'})$ is not always negative enough and cannot always compensate the three first terms. 
So we need a better control than the superadditivity lemma. 
The additional term $2\min(k,k')$ in the next lemma will make the difference.
By convention, for $\gamma$ a periodic orbit of~$\Tpqr$, we denote by~$\cro(\gamma, \gamma)$ twice the number of double points of~$\gamma$ (this is the crossing of~$\gamma$ with a copy of itself slightly translated along the template).

\begin{lemma}
	\label{L:Lkijk}
	The crossing number $\cro((a^{p-1}b)^ka^ib^j, (a^{p-1}b)^{k'}a^{i'}b^{j'})$ is at least equal to 
	$$kk'\cro(a^{p-1}b, a^{p-1}b) + k \cro(a^{p-1}b, a^{i'}b^{j'}) + k' \cro(a^{p-1}b, a^{i}b^{j}) + \cro(a^ib^j, a^{i'}b^{j'}) +2\min(k,k').$$
\end{lemma}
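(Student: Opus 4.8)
The plan is to split the desired inequality into a \emph{superadditive part}, accounting for the first four terms, and a \emph{spiralling correction}, accounting for the extra $2\min(k,k')$. Throughout I assume $k\le k'$, so that $\min(k,k')=k$, the reverse case being symmetric. I write $w=a^ib^j$ and $w'=a^{i'}b^{j'}$ for the two terminal syllables and use that, by hypothesis, neither $w$ nor $w'$ equals the syllable $a^{p-1}b$.

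First I would recover the four-term lower bound directly from Lemma~\ref{L:Cutting}. The code $(a^{p-1}b)^ka^ib^j$ decomposes, block by block, into $k$ factors $a^{p-1}b$ and one factor $a^ib^j$; iterating the superadditivity of the crossing number along these successive cuts shows that $\cros((a^{p-1}b)^ka^ib^j,\cdot)$ dominates the sum of the crossing numbers of its blocks against $(a^{p-1}b)^{k'}a^{i'}b^{j'}$. Decomposing the second orbit in the same way and collecting the $(k{+}1)(k'{+}1)$ block-against-block contributions yields exactly $kk'\cro(a^{p-1}b,a^{p-1}b)+k\cro(a^{p-1}b,w')+k'\cro(a^{p-1}b,w)+\cro(w,w')$. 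One must check that each of the successive factorizations is a genuine cut in the sense of the definition, which is immediate from the block structure, every factor beginning at a bottom corner of the diagram.

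The heart of the proof is to gain the $2k$ crossings that superadditivity discards, and here I would refine the intermediate-zone analysis of the proof of Lemma~\ref{L:Cutting}. The syllable $a^{p-1}b$ is a $2\pi/r$-rotation about the order-$r$ point, so along $(a^{p-1}b)^k$ the orbit makes $k$ consecutive turns forming a spiral around the core closed orbit $(a^{p-1}b)^\infty$. Because this orbit eventually leaves the pattern through $w\neq a^{p-1}b$, the whole spiral lies strictly on one definite side of the core; the same holds for the second orbit with its $k'$ turns. The two spirals thus share $k$ nested turns, and I would show that each shared turn forces an arc of the longer spiral to enter the intermediate zone separating the shorter spiral from the core and to leave it again, which by the enter-and-exit dichotomy of Lemma~\ref{L:Cutting} contributes two crossings beyond those counted block-by-block. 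Summing over the $k$ shared turns produces the term $2\min(k,k')$.

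The main obstacle is making this last count rigorous rather than merely pictorial. One must verify that the deviations caused by the terminal syllables $w$ and $w'$ place the two spirals on a common side of the core, and that the nesting is \emph{strict} over all $\min(k,k')$ shared turns, so that none of the claimed $2k$ extra intersections is already absorbed into the self-crossing quantity $\cro(a^{p-1}b,a^{p-1}b)$. The exclusion of the endpoints $a^{p-1}b$ (and, through the encoding, of $ab^{q-1}$) for $w$ and $w'$ is precisely what guarantees a nondegenerate deviation; I would feed this into a comparison of the lexicographic positions of the relevant shifts of $((a^{p-1}b)^kw)^\infty$ and $((a^{p-1}b)^{k'}w')^\infty$ to pin down the side and the strictness, after which the combinatorial crossing count on the Lorenz template provides a clean cross-check of the $+2\min(k,k')$ excess and completes the bound.
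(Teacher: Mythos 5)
Your first step---iterating Lemma~\ref{L:Cutting} over the block structure to obtain the four-term lower bound---is correct, and so is your identification of the geometric mechanism behind the extra term: since $a^ib^j\neq a^{p-1}b$ and $a^{i'}b^{j'}\neq a^{p-1}b$, both orbits deviate from the core $(a^{p-1}b)^\infty$ to the same side, so the two spirals are nested, the one with more turns lying closer to the core. One caveat already in this part: these block factorizations are \emph{not} admissible cuts of $\Tpqr$ (the factor $a^{p-1}b$ violates the kneading sequences), so this is not ``immediate from the block structure''; it is harmless only because the statement concerns crossing numbers of Lorenz-type orbits, and one can work on the full Lorenz template with trivial kneading sequences, as the paper does explicitly.

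The genuine gap is that the step which \emph{is} the content of the lemma---producing $2\min(k,k')$ crossings beyond the four-term bound---is asserted rather than proved, and the local form in which you assert it is not correct. The enter-and-exit dichotomy in the proof of Lemma~\ref{L:Cutting} applies to a \emph{single} cut, with the other orbit left intact; it gives no meaning to ``two crossings beyond those counted block-by-block'' once both orbits have been decomposed, and in fact the excess does not localize over block pairs. Concretely, for $p=q=3$, $\gamma=(a^2b)(ab)$ and $\gamma'=(a^2b)(a^2b)(ab)$, one has $\cro(\gamma,\gamma')=18$, exceeding the four-term bound $16$ by exactly $2=2\min(k,k')$; but counting crossings pair-of-blocks by pair-of-blocks gives $3,4,3$ against the first block of $\gamma$ and $3,3,2$ against its second block, so the pair formed by the two first blocks carries only $3$ crossings---\emph{one fewer} than $\cro(a^2b,a^2b)=4$---while the net excess $+2$ is spread over other pairs. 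Hence no turn-by-turn certification against the block decomposition can work: the $+2$'s must be extracted one cut at a time, with the longer orbit kept whole. That is exactly how the paper proceeds: an induction on $k$ (assuming $k\le k'$), peeling a single $a^{p-1}b$ off the \emph{shorter} word at each step, and for that single cut exhibiting two explicitly located arcs of the intact longer orbit that enter the intermediate zone through the shorter one---one in the left part, because the leftmost point of the longer orbit lies to the left of that of the shorter one (this is where $k\le k'$ is used), and one in the right part, found by a comparison of specific shifts. Your sketch honestly flags the rigor problem, but the proposed remedy (lexicographic comparisons ``to pin down the side and the strictness'') does not supply this sequential bookkeeping, and the closing ``summing over the $k$ shared turns'' is precisely the unjustified step.
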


\begin{proof}
	Without loss of generality suppose $k\le k'$ (if $k=k'$, also suppose $i\le i'$, and if moreover $i=i'$ suppose $j\ge j'$). 
	We make an induction on $k$.
	For $k=0$, the result is a particular case of Lemma~\ref{L:Cutting}. 
	
	Now, consider the cut $(a^{p-1}b)^{k}a^{i'}b^{j'} = a^{p-1}\left| b(a^{p-1}b)^{k-1}a^{i'}b^{j'-1}\right| b$, whose factors are then $a^{p-1}b$ and $(a^{p-1}b)^{k-1}a^{i}b^{j}$ (after cyclic permutation). 
	Note that this cut is not admissible since $a^{p-1}b$ does not lie on~$\Tpqr$.
	However the statement only deals with crossing numbers on any Lorenz-type template, so that we can work on a Lorenz template with trivial kneading sequences $a^\infty$ and~$b^\infty$. 
	
	In the proof of Lemma~\ref{L:Cutting} we introduced the intermediate zone and showed that the crossing number $\cro((a^{p-1}b)^{k}a^{i}b^{j}), (a^{p-1}b)^{k'}a^{i'}b^{j'})$ exceeds the sum $\cro(a^{p-1}b, (a^{p-1}b)^{k'}a^{i'}b^{j'}))$ $ + \cro((a^{p-1}b)^{k-1}a^ib^j, (a^{p-1}b)^{k'}a^{i'}b^{j'}))$ by twice the number of arcs of the orbit~$(a^{p-1}b)^{k'}a^{i'}b^{j'}$ that enter the intermediate zone by crossing~$(a^{p-1}b)^{k}a^{i}b^{j}$ (see Fig.~\ref{F:EmbrassedCutting} left). 
	By the assumption $k\le k'$, the leftmost point of $(a^{p-1}b)^{k'}a^{i'}b^{j'}$ is to the left of $(a^{p-1}b)^{k}a^{i}b^{j}$, so that there is one arc of $(a^{p-1}b)^{k'}a^{i'}b^{j'}$ that enters the left part of the intermediate zone by crossing~$(a^{p-1}b)^{k}a^{i}b^{j}$. 
	Also, by $k\le k'$, there is an arc of~$(a^{p-1}b)^{k'}a^{i'}b^{j'}$ situated between $b(a^{p-1}b)^{k-1}a^{i'}b^{j'-1}$ and $b(a^{p-1}b)^{k-1}a^{i'}b^{j'}a^{p-1}$, so that there is also one arc of $(a^{p-1}b)^{k'}a^{i'}b^{j'}$ that enters the right part of the intermediate zone by crossing~$(a^{p-1}b)^{k}a^{i}b^{j}$.  
	Therefore the number $\cro((a^{p-1}b)^{k}a^{i}b^{j}), (a^{p-1}b)^{k'}a^{i'}b^{j'})$ exceeds $\cro(a^{p-1}b, (a^{p-1}b)^{k'}a^{i'}b^{j'})) +$ $\cro((a^{p-1}b)^{k-1}a^ib^j, (a^{p-1}b)^{k'}a^{i'}b^{j'}))$ by at least two. 
	This concludes the induction step.
\end{proof}

\begin{lemma}
\label{L:wijk}
	For $0\le k,k'\le \frac{r-2}2$, the number $\lk((a^{p-1}b)^ka^ib^j, (a^{p-1}b)^{k'}a^{i'}b^{j'})$ is negative.
\end{lemma}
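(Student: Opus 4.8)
The plan is to feed the crossing-number estimate of Lemma~\ref{L:Lkijk} into formula~\eqref{Eq:lk}, exploiting that the $\Qred$-part of the linking number is exactly bilinear (hence additive under concatenation of the building blocks $a^{p-1}b$, $a^ib^j$, $a^{i'}b^{j'}$) whereas the crossing-part is only superadditive. Writing $\gamma=(a^{p-1}b)^ka^ib^j$ and $\gamma'=(a^{p-1}b)^{k'}a^{i'}b^{j'}$, and assuming without loss of generality that $k\le k'$ (so that $\min(k,k')=k$; the linking number is symmetric), Lemma~\ref{L:Lkijk} together with the bilinearity of $\Qred$ and the already-computed values $\Dpqr\lk(a^{p-1}b,a^{p-1}b)=pq-p-q$ and $\Dpqr\lk(a^{p-1}b,a^ib^j)=qi-pj$ yields the master inequality
\[
\Dpqr\lk(\gamma,\gamma')\le kk'(pq{-}p{-}q)+k(qi'{-}pj')+k'(qi{-}pj)+\Dpqr\lk(a^ib^j,a^{i'}b^{j'})-\Dpqr k.
\]
Calling the right-hand side $G$, it remains to prove $G<0$ for all admissible $(k,k',i,j,i',j')$.

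Next I would exploit the very simple dependence of $G$ on the pair $(k,k')$. For fixed $(i,j,i',j')$ the function $G$ is a quadratic in $(k,k')$ whose quadratic part is the single term $(pq-p-q)kk'$; this quadratic form is indefinite, so $G$ has no interior local maximum on the triangle $\{0\le k\le k'\le\frac{r-2}2\}$. Moreover $G$ restricted to the edge $k=k'$ is a convex parabola, and restricted to the edges $k=0$ and $k'=\frac{r-2}2$ it is affine; hence on every edge its maximum is attained at an endpoint. Consequently the maximum of $G$ over the triangle is reached at one of the three vertices, so it suffices to prove $G<0$ in the three cases $(k,k')\in\{(0,0),\,(0,\lfloor\frac{r-2}2\rfloor),\,(\lfloor\frac{r-2}2\rfloor,\lfloor\frac{r-2}2\rfloor)\}$.

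The vertex $(0,0)$ gives exactly $G=\Dpqr\lk(a^ib^j,a^{i'}b^{j'})$, which is negative by Lemmas~\ref{L:EasyCase} and~\ref{L:HardCase}: the two excluded corners $(i,j)=(1,q-1)$ and $(i',j')=(p-1,1)$ simply do not occur, since by Lemma~\ref{L:Extremal} extremal orbits avoid precisely these corners. For the other two vertices, substituting the chosen values of $k,k'$ turns $G$ into a piecewise-linear function of $(i,j,i',j')$, piecewise because $\lk(a^ib^j,a^{i'}b^{j'})$ is governed by the two explicit crossing-number formulas of Figure~\ref{F:Crossing}. On each linearity region I would maximize over the vertices of the box $[[1,p-1]]\times[[1,q-1]]$ (with the two forbidden corners removed), reducing to a finite list of inequalities to be checked one by one, exactly in the style of Lemmas~\ref{L:EasyCase} and~\ref{L:HardCase}. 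Here the bound $qi-pj\le pq-2p-q$, valid precisely because the corner $(p-1,1)$ is forbidden, is what keeps the cross terms $k'(qi-pj)$ under control.

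The main obstacle is that the compensation between the positive contributions and the negative term $-\Dpqr k$ is genuinely tight for small parameters. For large $r$ the bracket $\frac{r-2}2(pq-p-q)+(qi-pj)+(qi'-pj')-\Dpqr$ that controls the vertex $(\lfloor\frac{r-2}2\rfloor,\lfloor\frac{r-2}2\rfloor)$ is strongly negative, so $G<0$ is immediate; but for $r$ small, and especially near $p=q=3$ where one already has $\Dpqr<(r-2)(pq-p-q)$, this bracket can be nonnegative, and one must then use the precise magnitude of the genuinely negative quantity $\Dpqr\lk(a^ib^j,a^{i'}b^{j'})$ rather than merely its sign. It is this handful of extremal corner inequalities, each an honest finite computation, that carries the real weight of the lemma (and which the Sage verification mentioned in the remark confirms).
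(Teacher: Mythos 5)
Your proposal reproduces the paper's own proof essentially step for step: the same master inequality obtained by feeding Lemma~\ref{L:Lkijk} into Equation~\eqref{Eq:lk} (with the crucial $-\Dpqr\min(k,k')$ correction beyond plain superadditivity), the same reduction of $(k,k')$ to the three vertices $(0,0)$, $(0,\tfrac{r-2}2)$, $(\tfrac{r-2}2,\tfrac{r-2}2)$ of the triangle $k\le k'$ (your justification via the indefinite $kk'$ term and convexity on the diagonal edge is in fact more careful than the paper's terse ``linear in $k,k'$''), and the same corner-case strategy in $(i,j,i',j')$, including the correct observation that the tight small-parameter cases force one to use the magnitude, not just the sign, of $\lk(a^ib^j,a^{i'}b^{j'})$ --- exactly how the paper treats $(p,q,r)=(3,4,4)$, where the generic bound degenerates to $+2$. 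The only shortfall is completeness: where you defer to ``a finite list of inequalities to be checked one by one,'' the paper actually carries these out as polynomial inequalities valid for \emph{all} admissible $p,q,r$ (Cases 2.1--2.4 and the factorizations of Case 3, e.g.\ $-((p-1)(q-2)(r-2)-2(q-2))/2<0$), which a Sage check over finitely many $(p,q,r)$ cannot replace.
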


\begin{proof}
	Without loss of generality, we assume $k\le k'$. 
	Thanks to Lemma~\ref{L:Lkijk}, we have 
		\begin{eqnarray*}
		\Dpqr\lk((a^{p-1}b)^ka^ib^j, (a^{p-1}b)^{k'}a^{i'}b^{j'})
		&=& kk'(pq-p-q) + k(qi'-pj')+k'(qi-jp)\\
		&&+\Dpqr \lk(a^ib^j, a^{i'}b^{j'}) +k(-pqr + pq+pr+qr).
		\end{eqnarray*}

	This expression is linear in~$k, k'$. 
	Therefore it is enough to evaluate it for $(k,k') = (0,0), (0,\frac{r-2}2),$ and $(\frac{r-2}2,\frac{r-2}2)$. 

	\begin{itemize}
	\item
	\emph{Case 1: $(k,k') = (0,0)$.}	This corresponds to Lemmas~\ref{L:EasyCase} and \ref{L:HardCase}.
	
	\item
	\emph{Case 2: $(k,k') =(0,\frac{r-2}2)$.} 
%
%
	We have
	\[ \Dpqr\lk(a^ib^j, (a^{p-1}b)^{\frac{r-2}2}a^{i'}b^{j'}) 
	= (iq-jp)(r-2)/2 +\Dpqr \lk(a^ib^j, a^{i'}b^{j'}).
	\]
	By subadditivity, the maximum is reached when $(i,j)$ and $(i',j')$ are extremal, that is, equal to $(1,1), (p-2,1), (p-1,2), (p-1,q-1), (2, q-1),$ or $(1, q-1)$. 
	In all cases except $(p-2,1)$ and $(p-1,2)$ the term $(iq-jp)(r-2)/2$ is negative or clearly does not compensate the contribution of $\Dpqr \lk(a^ib^j, a^{i'}b^{j'})$.
	Therefore, the maximum of $\lk(a^ib^j, (a^{p-1}b)^{\frac{r-2}2}a^{i'}b^{j'})$, if positive, is reached for $(i,j,i',j')= (p-1,2, p-1,2), (p-1, 2, p-2, 1), (p-2, 1, p-1, 2), $ or $(p-2,1,p-2,1)$. 
	\begin{itemize}
		\item\noindent\emph{Case 2.1: $(i,j,i',j')=(p-1,2, p-1,2)$}. Then $\Dpqr\lk(a^{p-1}b^2, (a^{p-1}b)^{\frac{r-2}2}a^{p-1}b^{2}) = (pq-2p-q)(r-2)/2 -pqr+2pq+2pr+qr-4p-q-r = -((p-1)(q-2)(r-2)-2(q-2))/2$, which is negative.
		\item\noindent\emph{Case 2.2: $(i,j,i',j')=(p-1,2, p-2,1)$}. Then $\Dpqr\lk(a^{p-1}b^2, (a^{p-1}b)^{\frac{r-2}2}a^{p-2}b^{1}) =  (pq-2p-q)(r-2)/2 -pqr +2pq+pr+qr-2p-2q+r = -((p-1)q(r-2) -2r)/2 $, which is negative.
		\item\noindent\emph{Case 2.3: $(i,j,i',j')=(p-2,1, p-1,2)$}. Then $\Dpqr\lk(a^{p-2}b^1, (a^{p-1}b)^{\frac{r-2}2}a^{p-1}b^{2}) = (pq-p-2q)(r-2)/2 -pqr+2pq+pr+qr-2p-2q+r= -(p(q-1)(r-2)-2r)/2$, which is negative.
		\item\noindent\emph{Case 2.4: $(i,j,i',j')=(p-2,1, p-2,1)$}. Then $\Dpqr\lk(a^{p-2}b^1, (a^{p-1}b)^{\frac{r-2}2}a^{p-2}b^{1}) = (pq-p-2q)(r-2)/2 -pqr +2pq+pr+2qr-p-4q-r = -((p-2)q(r-2)+p+r)/2$, which is negative.

	\end{itemize}
	
	\item
	\emph{Case 3: $(k,k') =(\frac{r-2}2,\frac{r-2}2)$.} 
	We have
		\begin{eqnarray*}
		&&\Dpqr\lk((a^{p-1}b)^{\frac{r-2}2}a^ib^j, (a^{p-1}b)^{\frac{r-2}2}a^{i'}b^{j'})\\
		&=&(-pqr+pr+qr+2p+2q+2q(i+i')-2p(j+j'))(r-2)/2 \\&&+\Dpqr \lk(a^ib^j, a^{i'}b^{j'})
		\end{eqnarray*}
	Since $i,i'\le p-1, j,j'\ge 1$, we have $2q(i+i')-2p(j+j') \le 2(pq-p-q)$. 
	Actually, since both $(i,j)$ and $(i',j')$ cannot be $(p-1,1)$, we even have $2q(i+i')-2p(j+j') \le \max(4(pq-2p-q), 4(pq-p-2q))$.
	Since $\Dpqr \lk(a^ib^j, a^{i'}b^{j'})$ is always negative (Lemmas~\ref{L:EasyCase} and~\ref{L:HardCase}), we have
		\begin{eqnarray*}
		&&\Dpqr\lk((a^{p-1}b)^{r/2}a^ib^j, (a^{p-1}b)^{r/2}a^{i'}b^{j'})\\
		&< & (-pqr+4pq+pr+qr-6p-2q-4\min(p,q))(r-2)/2\\
		&=& (-(p-2)(q-2)(r-5)-(p-2)(r-5)-(q-2)(r-5)\\	
		&&-(p-3)(q-3)-4\min(p,q)+9)(r-2)/2
		\end{eqnarray*}

	This last expression is clearly negative, for $r\ge 5$. 

	Since we assumed $p, q\le r$, we are left with the cases $(p,q,r) = (3,3,4), (3,4,4)$ and $(4,4,4)$. 
	We check directly that in the cases $(3,3,4)$ and $(4,4,4)$ the expression is actually negative.
	In the case $(3,4,4)$, the expression is equal to~$+2$. Going through all cases of Lemmas~\ref{L:EasyCase} and~\ref{L:HardCase}, 
	one checks that $\Dpqr \lk(a^ib^j, a^{i'}b^{j'})$ is in this case always strictly smaller than $-1$, 
	so that $\Dpqr\lk((a^{p-1}b)^ka^ib^j, (a^{p-1}b)^{k'}a^{i'}b^{j'})$ is always negative in this case. 
	This concludes the proof.
	\end{itemize}
	\vspace{-5mm}
\end{proof}

\subsection{Some additional computations}

The other needed lemmas are easier.

\begin{lemma}
	\label{L:Oppose}
	For $k, k'\ge 1$, we have $\lk((a^{p-1}b)^ka^ib^j,(ab^{q-1})^{k'}a^{i'}b^{j'}) <0$.
\end{lemma}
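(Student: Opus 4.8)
The plan is to follow exactly the same strategy that worked in Lemma~\ref{L:wijk}, but now with orbits whose blocks wind around \emph{different} conic points: one orbit contains the syllable $a^{p-1}b$ (a rotation around the order-$r$ point) and the other contains $ab^{q-1}$ (a rotation around... in fact the other extremal syllable). The key structural point is that these two types of blocks have \emph{opposite} sign contributions to the linking form, so unlike in Lemma~\ref{L:wijk} where $\lk(a^{p-1}b,a^{p-1}b)>0$ fought against us, here the cross-term $\lk(a^{p-1}b, ab^{q-1})$ should already be negative. Concretely, using $\Dpqr\lk(a^{p-1}b, a^ib^j)=qi-jp$ from the previous subsection, one computes $\Dpqr\lk(a^{p-1}b, ab^{q-1})=q\cdot 1-(q-1)p = q-pq+p = -(p-1)(q-1)+1$, which is negative for $p,q\ge 2$. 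This is the sign we want, and it means the dangerous positive diagonal contribution simply does not arise in this mixed case.

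First I would establish the analogue of the superadditivity estimate, Lemma~\ref{L:Lkijk}, for the mixed product. That is, I would show that the crossing number $\cro((a^{p-1}b)^ka^ib^j,(ab^{q-1})^{k'}a^{i'}b^{j'})$ is at least the sum of the four pairwise crossing numbers coming from expanding both words as products of their syllables. Here, since the cross-term $\lk(a^{p-1}b, ab^{q-1})$ already carries the correct sign, I would not even need the extra $+2\min(k,k')$ bonus term that was essential in Lemma~\ref{L:Lkijk}; the plain superadditivity of the crossing number (Lemma~\ref{L:Cutting}) should suffice. Expanding via subadditivity of the linking number then gives
\begin{eqnarray*}
\Dpqr\lk((a^{p-1}b)^ka^ib^j,(ab^{q-1})^{k'}a^{i'}b^{j'})
&\le& kk'\,\Dpqr\lk(a^{p-1}b, ab^{q-1}) + k\,\Dpqr\lk(a^{p-1}b, a^{i'}b^{j'})\\
&&+ k'\,\Dpqr\lk(ab^{q-1}, a^ib^j) + \Dpqr\lk(a^ib^j, a^{i'}b^{j'}),
\end{eqnarray*}
where each individual term can be read off from the explicit formulas $\Dpqr\lk(a^{p-1}b, a^ib^j)=qi-jp$ and its $b$-analogue $\Dpqr\lk(ab^{q-1}, a^ib^j)=pj-qi$ (obtained symmetrically, being careful about the ordering conventions for $\Qred$).

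Next, I would exploit linearity in $(k,k')$ exactly as in the proof of Lemma~\ref{L:wijk}: the right-hand side is affine in $k$ and $k'$, so it suffices to check the three corner values $(k,k')=(0,0)$, $(0,\frac{r-2}2)$, and $(\frac{r-2}2,\frac{r-2}2)$ (the case $(\frac{r-2}2,0)$ being handled by symmetry $p\leftrightarrow q$, $a\leftrightarrow b$). The base case $k=k'=0$ is again Lemmas~\ref{L:EasyCase} and \ref{L:HardCase}. For the remaining corners I would reduce $(i,j)$ and $(i',j')$ to the finitely many extremal values listed in Lemma~\ref{L:Extremal}, by subadditivity, and then verify the inequality on that finite list. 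I expect the main obstacle, as in the earlier lemmas, to be purely bookkeeping: controlling the finitely many boundary cases where the coefficients $q-pq+p$ etc.\ are small, and confirming that none of them produces a nonnegative value for small $(p,q,r)$ near the hyperbolicity threshold $\frac1p+\frac1q+\frac1r<1$. Since the mixed cross-term is genuinely negative rather than positive, I anticipate these estimates to be \emph{less} delicate than those of Lemma~\ref{L:wijk}, with no analogue of the borderline $(3,4,4)$ case arising; the essential work is the sign computation $\Dpqr\lk(a^{p-1}b, ab^{q-1})<0$ together with a routine corner-by-corner check.
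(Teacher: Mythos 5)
Your strategy is genuinely different from the paper's, and as written it has one real flaw. The paper's proof is much shorter and needs no new case analysis: it cuts only the \emph{first} word, giving $\lk((a^{p-1}b)^ka^ib^j,(ab^{q-1})^{k'}a^{i'}b^{j'}) \le k\,\lk(a^{p-1}b,(ab^{q-1})^{k'}a^{i'}b^{j'}) + \lk(a^ib^j,(ab^{q-1})^{k'}a^{i'}b^{j'})$; the second term is negative because it is an instance of Lemma~\ref{L:wijk} (in its $ab^{q-1}$-version, with first block-exponent equal to $0$), so all the delicate work is delegated to that already-proved lemma; for the first term one more cut gives $k'(-pq+p+q)+(qi'-pj') \le -pq+p+q+qi'-pj' = q(i'-p+1)+p(1-j')<0$, where the hypothesis $k'\ge 1$ lets a \emph{single} factor $-(pq-p-q)$ absorb the worst positive value of $qi'-pj'$. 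Your computation of the mixed cross-term $\Dpqr\lk(a^{p-1}b,ab^{q-1})=p+q-pq<0$ is correct and is indeed the same mechanism, but you never need it multiplied by $kk'$ in the paper's route.

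The flaw in your plan is the corner set. You propose to check your multi-affine bound at $(k,k')=(0,0)$, $(0,\frac{r-2}{2})$, $(\frac{r-2}{2},\frac{r-2}{2})$, i.e.\ over the box $0\le k,k'\le\frac{r-2}{2}$, but at the corners with $k=0$ (or $k'=0$) the plain-subadditivity bound is \emph{not} strictly negative, so the check fails there. Concretely, take $p=3$, $q=4$, $(i,j)=(1,q-2)=(1,2)$, $(i',j')=(1,1)$: one computes $\Dpqr\lk(ab^2,ab)=-(r-2)$ and $pj-qi=2$, so your bound at $(k,k')=(0,\frac{r-2}{2})$ equals $\frac{r-2}{2}\cdot 2-(r-2)=0$. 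This is exactly the degeneration that forced the paper to prove Lemma~\ref{L:Lkijk} with the extra term $+2\min(k,k')$: plain superadditivity of crossing numbers is too lossy when one side carries no repeated block and the other carries many. So your claims that the bonus term is unnecessary and that no borderline case arises are wrong on the box you chose. The repair uses the hypothesis you were given: since $qi'-pj'<pq-p-q$ and $pj-qi<pq-p-q$ hold strictly on the allowed exponent ranges (which exclude $(p-1,1)$ and $(1,q-1)$), your bound is strictly decreasing in $k$ and in $k'$ once $k,k'\ge 1$, so it suffices to check $(k,k')=(1,1)$; there the finite corner check in $(i,j,i',j')$ does come out strictly negative (the worst cases, e.g.\ $(i,j,i',j')=(1,q-2,p-2,1)$ or $(2,q-1,p-1,2)$, give exactly $-r$). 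With that correction your argument goes through, at the cost of a case analysis comparable to Lemma~\ref{L:HardCase} which the paper's two-line proof avoids entirely.
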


\begin{proof}
	By Lemma~\ref{L:Cutting}, we have $\lk((a^{p-1}b)^ka^ib^j,(ab^{q-1})^{k'}a^{i'}b^{j'}) \le k\lk(a^{p-1}b,(ab^{q-1})^{k'}a^{i'}b^{j'}) + \lk(a^ib^j,(ab^{q-1})^{k'}a^{i'}b^{j'})$.
	By Lemma~\ref{L:wijk} the second term is negative, so we only have to show that $\lk(a^{p-1}b,(ab^{q-1})^{k'}a^{i'}b^{j'})$ is negative.
	By Lemma~\ref{L:Cutting}, we have 
	\begin{eqnarray*}
		\lk(a^{p-1}b,(ab^{q-1})^{k'}a^{i'}b^{j'}) &\le& \lk(a^{p-1}b,(ab^{q-1})^{k'}) + \lk(a^{p-1}b,a^{i'}b^{j'})\\
		&=& k'(-pq+p+q) + (qi'-pj') \\ 
		&\le& -pq+p+q + qi'-pj' 
		= q(i'-p+1)+p(1-j'). 
	\end{eqnarray*}
	The latter expression is always negative.
\end{proof}

\begin{lemma}
	\label{L:Mixed2}
	For $k,l, k',l'\ge 1$, we have $\lk((a^{p-1}b)^k(a^{p-1}b)^{l},(ab^{q-1})^{k'}(a^{p-1}b)^{l'}) <0$.
\end{lemma}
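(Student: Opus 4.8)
The statement as printed must contain a typo—taking both inner factors equal to $a^{p-1}b$ makes the first orbit the imprimitive word $(a^{p-1}b)^{k+l}$—so, following the case table of \S\ref{S:Proof}, I read it as the mixed-versus-mixed estimate $\lk\!\big((a^{p-1}b)^k(ab^{q-1})^l,(a^{p-1}b)^{k'}(ab^{q-1})^{l'}\big)<0$ for $1\le k,l,k',l'\le\frac{r-2}2$, and I set $\gamma=(a^{p-1}b)^k(ab^{q-1})^l$, $\gamma'=(a^{p-1}b)^{k'}(ab^{q-1})^{l'}$, and $M=pq-p-q$.

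My plan is to first pin down the homological part of \eqref{Eq:lk} exactly. Since $\Qred$ is bilinear and $(\nombre a\gamma,\nombre b\gamma)=k(p{-}1,1)+l(1,q{-}1)$, I only need its values on block pairs. Combining the recorded linking numbers $\lk(a^{p-1}b,a^{p-1}b)=(pq-p-q)/\Dpqr$ and $\lk(a^{p-1}b,ab^{q-1})=-(pq-p-q)/\Dpqr$ with the crossing numbers $\cro(a^{p-1}b,a^{p-1}b)=2(p{-}1)$, $\cro(ab^{q-1},ab^{q-1})=2(q{-}1)$, $\cro(a^{p-1}b,ab^{q-1})=2$ read off Figure~\ref{F:Tpqr}, Lemma~\ref{L:LinkingRed} gives $\Qred((p{-}1,1),(p{-}1,1))=M+(p{-}1)\Dpqr$, $\Qred((1,q{-}1),(1,q{-}1))=M+(q{-}1)\Dpqr$ and $\Qred((p{-}1,1),(1,q{-}1))=-M+\Dpqr$. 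Expanding bilinearly I would obtain
\[
\Qred(\gamma,\gamma')=M(k-l)(k'-l')+\Dpqr\big[(p{-}1)kk'+kl'+lk'+(q{-}1)ll'\big].
\]
The key observation is that the bracket is exactly half the \emph{block crossing count} $N:=2(p{-}1)kk'+2(kl'+lk')+2(q{-}1)ll'$ (the sum of $\cro$ over all block pairs), so that, writing $\mathrm{Corr}:=\cro(\gamma,\gamma')-N$, Equation~\eqref{Eq:lk} collapses to the clean identity $\lk(\gamma,\gamma')=-\frac12\mathrm{Corr}+\frac{M}{\Dpqr}(k-l)(k'-l')$.

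Next I would establish the two lower bounds $\mathrm{Corr}\ge 2\min(k,k')$ and $\mathrm{Corr}\ge 2\min(l,l')$. These are proved exactly as Lemma~\ref{L:Lkijk}: peeling the surplus $a^{p-1}b$-blocks one at a time through the combinatorially valid (if non-admissible) cut $a^{p-1}\,|\,b(\cdots)\,|\,b$, each peel drives two extra arcs of the longer orbit into the intermediate zone of Figure~\ref{F:EmbrassedCutting} and so adds $+2$ to the crossing count; iterating $\min(k,k')$ times gives the first bound, and the involution $a\leftrightarrow b$, $p\leftrightarrow q$ (which swaps the two block types and exchanges $(k,l)$ with $(l,k)$) gives the second. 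Carrying this peeling argument through cleanly when the tails carry several genuine $ab^{q-1}$-blocks—rather than the single syllable $a^ib^j$ of Lemma~\ref{L:Lkijk}—is the one delicate point, and I expect it to be the main obstacle; everything else is formal.

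Granting these bounds, the conclusion follows with no case checking. If $(k-l)(k'-l')\le 0$ then the homological term is $\le 0$ while $-\frac12\mathrm{Corr}\le-\min(k,k')\le-1$, so $\lk(\gamma,\gamma')<0$. If $(k-l)(k'-l')>0$, the involution lets me assume both orbits are $a^{p-1}b$-heavy, $k>l$ and $k'>l'$, and swapping $\gamma,\gamma'$ that $k\le k'$; then, using $l,l'\ge 1$, $k'-1\le\frac{r-4}2$ and $p,q\ge 3$,
\[
\frac{M}{\Dpqr}(k-l)(k'-l')\le\frac{M}{\Dpqr}(k-1)(k'-1)\le\frac{M(r-4)}{2\Dpqr}(k-1)\le\frac12(k-1)<k=\min(k,k'),
\]
where the third inequality rests on $M(r-4)\le\Dpqr$, i.e. $3pq\ge4p+4q$, valid since $p,q\ge 3$. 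Hence $\lk(\gamma,\gamma')\le-\min(k,k')+\frac{M}{\Dpqr}(k-l)(k'-l')<0$, which finishes the argument.
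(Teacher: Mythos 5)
Your reading of the statement is the right one: by the case table in \S\ref{S:Proof}, this lemma is the diagonal entry for orbits of type $(a^{p-1}b)^k(ab^{q-1})^l$, and the repeated $(a^{p-1}b)$ in the printed statement is a typo (the paper's own proof text carries the same typo but clearly treats the mixed words). Your framework is also exactly the paper's, only better packaged: your identity $\lk(\gamma,\gamma')=-\frac12\mathrm{Corr}+\frac{M}{\Dpqr}(k-l)(k'-l')$, with $\mathrm{Corr}=\cro(\gamma,\gamma')-N$, is precisely what the paper writes as $\Dpqr\lk=(pq-p-q)(k-l)(k'-l')+(-pqr+pq+qr+pr)\cdot(\text{excess})$, and your block values of $\Qred$ check out. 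Two further points in your favor: the restriction $k,l,k',l'\le\frac{r-2}2$ that you impose (forced by the kneading sequences of Table~\ref{Table}, and used implicitly by the paper when it maximizes at $k=k'=\frac{r-2}2$) is genuinely necessary, since without it the positive term grows quadratically while the correction grows only linearly; and your endgame via $M(r-4)\le\Dpqr$ (valid for $p,q\ge3$) is cleaner than the paper's, which instead invokes linearity in each of $k,k',l,l'$ and evaluates at the extreme point $k=k'=\frac{r-2}2$, $l=l'=1$, where the bound equals $(pq-p-q)\bigl(\frac{r-4}2\bigr)^2-\Dpqr\frac{r-2}2<0$.

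The step you flag as ``the main obstacle'' is, however, a genuine gap, and it is exactly where the paper's proof does its work. The paper does not establish the excess by a peeling induction as in Lemma~\ref{L:Lkijk}; it counts directly: following $\gamma$, at the transition from the $(a^{p-1}b)^k$ part to the $(ab^{q-1})^l$ part the orbit traverses the intermediate zone and must cross $\min(l,l')$ arcs of $\gamma'$ (those carrying the last $b$ of the first $\min(l,l')$ blocks of $(ab^{q-1})^{l'}$), and at the return transition it crosses $\min(k,k')$ arcs; with one crossing counted twice, this gives $\mathrm{Corr}\ge 2\bigl(\min(k,k')+\min(l,l')-1\bigr)$, which implies both of your one-sided bounds $\mathrm{Corr}\ge2\min(k,k')$ and $\mathrm{Corr}\ge2\min(l,l')$ simultaneously. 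So your two bounds are true, but as submitted the argument is incomplete at its crux: to finish, you must either carry out the peeling (verifying at each peel the two conditions that produce the extra $+2$ in Lemma~\ref{L:Lkijk} --- the leftmost-point comparison, including the tie-breaking needed when $k=k'$, and the existence of an arc of $\gamma'$ entering the right part of the intermediate zone --- now in the presence of several $ab^{q-1}$ blocks in the tail), or simply substitute the paper's direct corner count, which is shorter and yields the stronger, two-sided correction.
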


\begin{proof}
	Using the superidditivity Lemma~\ref{L:Cutting} and the formula $\Dpqr\lk(a^{p-1}b,a^ib^j) = qi-pj$, we get 
	$\Dpqr\lk((a^{p-1}b)^k(a^{p-1}b)^{l},(ab^{q-1})^{k'}(a^{p-1}b)^{l'}) \le (pq-p-q)(k-l)(k'-l')$, which is not always negative. 	
	So we need to add more precise information as we did with Lemma~\ref{L:Lkijk}. 
	Namely, when following the orbit with code $(a^{p-1}b)^k(a^{p-1}b)^{l}$, we roughly follow $a^{p-1}b$ for $k$ iterations and then $ab^{q-1}$ for $l$ iterations. 
	But when going from the first $k$ blocks to the next $l$ blocks, the orbit enters the intermediate zone, and actually crosses $\min(l,l')$ arcs of $(a^{p-1}b)^{k'}(a^{p-1}b)^{l'}$, namely those arcs that correspond to the last $b$ in the first $\min(l,l')$ blocks of $(ab^{q-1})^{l'}$. 
	Similarly, when the orbit goes from the the part $(ab^{q-1})^l$ to $(a^{p-1}b)^k$, the corresponding orbit crosses $\min(k,k')$ arcs of $(a^{p-1}b)^{k'}(a^{p-1}b)^{l'}$. 
	In this counting, one crossing is counted twice. 
	Thus the additional contribution to~$\lk((a^{p-1}b)^k(a^{p-1}b)^{l},$ $(ab^{q-1})^{k'}(a^{p-1}b)^{l'})$ is $\min(k,k')+\min(l,l')-1$. 
	Then we have $\Dpqr\lk((a^{p-1}b)^k(a^{p-1}b)^{l},$ $(ab^{q-1})^{k'}(a^{p-1}b)^{l'}) = (pq-p-q)(k-l)(k'-l') +(-pqr+pq+qr+pr)(\min(k,k')+\min(l,l')-1)$.
	
	The first term is positive if $k-l$ and $k'-l'$ are of the same sign. Without loss of generality we then assume $k\ge l, k'\ge l'$ and $k\ge k'$. 
	Depending whether $l\ge l'$ or not, we have 
	$\Dpqr\lk((a^{p-1}b)^k(a^{p-1}b)^{l},(ab^{q-1})^{k'}(a^{p-1}b)^{l'}) = (pq-p-q)(k-l)(k'-l') +(-pqr+pq+qr+pr)(k'+l'-1)$ or $(pq-p-q)(k-l)(k'-l') +(-pqr+pq+qr+pr)(k'+l-1)$.
	In both cases, the coefficients of~$k, k', l$ and $l'$ are respectively positive, positive, negative and negative, so these expressions are maximal for $k=k'=\frac {r-2}2, l= l'=1$. 
	They are then both equal to~$(pq-p-q)(\frac{r-4}2)^2 +(-pqr+pq+qr+pr)\frac{r-2}2$.
	The latter expression is negative for $p, q\ge 3, r\ge 4$.
\end{proof}

\begin{lemma}
	\label{L:Mixed}
	For $k\ge 0, k',l'\ge 1$, we have $\lk((a^{p-1}b)^ka^ib^j,(a^{p-1}b)^{k'}(ab^{q-1})^{l'}) <0$.
\end{lemma}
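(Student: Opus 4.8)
The plan is to prove $\lk((a^{p-1}b)^ka^ib^j,(a^{p-1}b)^{k'}(ab^{q-1})^{l'}) < 0$ by combining the superadditivity of linking numbers (Lemma~\ref{L:Cutting}) with the estimates already obtained in Lemmas~\ref{L:wijk}, \ref{L:Oppose}, and \ref{L:Mixed2}. The key idea is that the second argument $(a^{p-1}b)^{k'}(ab^{q-1})^{l'}$ is a concatenation of two blocks of opposite type, each of which we have already controlled when paired against orbits of the first type. So I would first cut the second word into its two natural factors and apply subadditivity to reduce the bilinear estimate to two pieces.

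Concretely, I would begin by treating the degenerate case $k=0$ separately, where the first orbit is simply $a^ib^j$; here the claim is a special instance of the situation handled in the mixed estimates, so I can invoke those directly. For $k\ge 1$, the first step is to apply Lemma~\ref{L:Cutting} to the second argument along the cut separating $(a^{p-1}b)^{k'}$ from $(ab^{q-1})^{l'}$, yielding
\begin{eqnarray*}
\lk((a^{p-1}b)^ka^ib^j,(a^{p-1}b)^{k'}(ab^{q-1})^{l'})
&\le& \lk((a^{p-1}b)^ka^ib^j,(a^{p-1}b)^{k'})\\
&& + \lk((a^{p-1}b)^ka^ib^j,(ab^{q-1})^{l'}).
\end{eqnarray*}
The second term on the right is negative by Lemma~\ref{L:Oppose} (taking $i'=j'=0$ there, or splitting off the trailing $a^ib^j$ once more), since it pairs a word dominated by $a^{p-1}b$-blocks against one built from $ab^{q-1}$-blocks. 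So the whole burden falls on showing that the first term, $\lk((a^{p-1}b)^ka^ib^j,(a^{p-1}b)^{k'})$, is negative.

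For that first term, I would use the explicit formula $\Dpqr\lk(a^{p-1}b,a^ib^j)=qi-pj$ together with subadditivity to expand
\[
\Dpqr\lk((a^{p-1}b)^ka^ib^j,(a^{p-1}b)^{k'}) \le k'\big(kk'{}^{-1}\cdots\big),
\]
more carefully, by cutting the first word into $k$ copies of $a^{p-1}b$ plus the tail $a^ib^j$ and the second into $k'$ copies of $a^{p-1}b$, reducing to $kk'(pq-p-q)+k'(qi-pj)$. Since $k,k'\le\frac{r-2}{2}$ and $i\le p-1$, $j\ge 1$, this quantity is dominated by the diagonal case already shown negative inside the proof of Lemma~\ref{L:wijk} (Case 3 there controls exactly the $(a^{p-1}b)^{k}$-against-$(a^{p-1}b)^{k'}$ interaction). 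The \textbf{main obstacle} I anticipate is the bookkeeping in this last reduction: the superadditive bound alone gives $kk'(pq-p-q)$, which is \emph{positive}, so I must instead route the estimate through the sharper count of Lemma~\ref{L:Lkijk} (with its crucial $+2\min(k,k')$ correction) rather than through bare subadditivity. Once that correction is in place, linearity in $k,k'$ lets me check the extremal values $(k,k')\in\{(0,0),(0,\tfrac{r-2}2),(\tfrac{r-2}2,\tfrac{r-2}2)\}$ exactly as in Lemma~\ref{L:wijk}, and the negativity follows for all hyperbolic $(p,q,r)$ with $p,q,r\ge 3$.
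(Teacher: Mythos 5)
Your plan collapses at its central step: the claim that $\lk((a^{p-1}b)^ka^ib^j,(a^{p-1}b)^{k'})$ is negative is false. Since the word $(a^{p-1}b)^{k'}$ is non-primitive, the corresponding cycle is $k'$ times the orbit $a^{p-1}b$, so this term equals $k'\lk((a^{p-1}b)^ka^ib^j,\,a^{p-1}b)$; and for this particular pair superadditivity is \emph{tight}: a direct count on the template gives $\cro((a^{p-1}b)^ka^ib^j,\,a^{p-1}b)=2k(p-1)+2i$ exactly, whence
\[
\Dpqr\,\lk\bigl((a^{p-1}b)^ka^ib^j,(a^{p-1}b)^{k'}\bigr)=k'\bigl(k(pq-p-q)+qi-pj\bigr),
\]
which is positive for every $k\ge 1$ (and can be positive even for $k=0$, whenever $qi>pj$). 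Lemma~\ref{L:Lkijk} cannot rescue this: its correction $+2\min(k,k')$ is produced by the interaction of the two \emph{tails} $a^ib^j$ and $a^{i'}b^{j'}$, and $(a^{p-1}b)^{k'}$ has no tail, so there is no extra crossing to harvest --- the exact equality above shows this. Consequently your combined estimate only yields $\Dpqr\,\lk\le(k'-l')\bigl(k(pq-p-q)+qi-pj\bigr)$, which is positive whenever $k'>l'$ and $k\ge1$, so the method cannot conclude. The failure is structural: cutting $(a^{p-1}b)^{k'}(ab^{q-1})^{l'}$ into its two halves destroys precisely the crossings that make the lemma true, namely those created when the tail $b^ja^i$ of the first orbit inserts itself into the \emph{last block} of the second orbit, across the junction between its $(a^{p-1}b)$-part and its $(ab^{q-1})$-part. (Two smaller gaps: the splitting of the second word into $(a^{p-1}b)^{k'}$ and $(ab^{q-1})^{l'}$ is not literally a cut in the sense of the paper's definition --- the end-letter conditions cannot be met --- though the same numerical bound follows by peeling off one block at a time; and your treatment of $k=0$ is circular, since for $k=0$ the statement \emph{is} Lemma~\ref{L:Mixed} itself, no other lemma covering $a^ib^j$ against $(a^{p-1}b)^{k'}(ab^{q-1})^{l'}$. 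Far from degenerate, $k=0$ with $k'$ large and $l'=1$ is where the expression above is largest.)

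The paper's proof keeps the second word whole for exactly this reason. It computes the crossing number of the pair as the block-decomposition count \emph{plus} an extra $2k+2$ crossings when $k<k'$ (resp.\ $2k'$ when $k\ge k'$), coming from this insertion phenomenon; via Lemma~\ref{L:LinkingRed} the estimate then becomes
\[
\Dpqr\,\lk\bigl((a^{p-1}b)^ka^ib^j,(a^{p-1}b)^{k'}(ab^{q-1})^{l'}\bigr)=(k'-l')\bigl(k(pq-p-q)+qi-pj\bigr)-(k+1)\Dpqr
\]
in the first case (resp.\ with $-k'\Dpqr$ in the second), and it is this additional negative multiple of $\Dpqr$ that makes the expression negative at all extremal parameter values, checked using linearity in $i,j,k,k',l'$. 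Any correct argument must capture these junction crossings; an estimate that first separates $(a^{p-1}b)^{k'}$ from $(ab^{q-1})^{l'}$ cannot see them.
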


\begin{proof}
	First assume~$k< k'$.
	Lemma~\ref{L:Cutting} then yields~$\cro((a^{p-1}b)^{k}a^ib^j,(a^{p-1}b)^{k'}(ab^{q-1})^{l'}) \le 2kk'(p-1)+2kl+2ik'+2l'$.
	As in the previous proof, there is an additional term $2k+2$ due to the fact that $b^ja^i$ inserts itself in the last block of $(a^{p-1}b)^{k'}(ab^{q-1})^{l'}$, 
	so that we have $\cro((a^{p-1}b)^{k}a^ib^j,(a^{p-1}b)^{k'}(ab^{q-1})^{l'}) = 2kk'(p-1)+2kl+2ik'+2l'+2k+2$. 
	We then obtain $\Dpqr\lk(a^ib^j,(a^{p-1}b)^{k'}(ab^{q-1})^{l'}) = (k'-l')(k(pq-p-q)+qi-pj)-(k+1)\Dpqr$. 
	The latter expression is linear in $i,j,k,k',l'$, and it is maximal for $i=p-1, j= 1, k=0, k'=\frac{r-2}2$ and $l=1$. 
	One checks that even in this case, it is negative.

	Now assume $k\ge k'$. 
	In this case the additional term is only $2k'$. 
	The we have $\cro((a^{p-1}b)^{k}a^ib^j,(a^{p-1}b)^{k'}(ab^{q-1})^{l'}) = 2kk'(p-1)+2kl+2ik'+2l'+2k'$. 
	and thus obtain $\Dpqr\lk(a^ib^j,(a^{p-1}b)^{k'}(ab^{q-1})^{l'}) = (k'-l')(k(pq-p-q)+qi-pj)-k'\Dpqr$. 
	This expression is linear in $i,j,k,k',l'$, and it is maximal for $i=p-1, j=1, k=\frac{r-2}2, k'= 1$ and $l'=1$. 
	Even in this case it is negative.
\end{proof}

\subsection{Proof of Theorem~\ref{T:Main} in the case $p=2$} 
\label{S:p=2}

First remark that, since the orbifold~$\Sigma_{2,2q',r}$ has an order 2-covering by the orbifold~$\Sigma_{q',q',r}$, left-handedness of the geodesic flow on $\U\Sigma_{2,2q',r}$ can be deduced from left-handedness of the geodesic flow on~$\U\Sigma_{q',q',r}$ (see~\cite[\S~2.4]{Pierre}). Therefore the only case not covered by the analysis when $p\ge 3$ is the case that $q$ and $r$ are odd. 

Lemma~\ref{L:Cutting} being also valid in the case $p=2$, it is enough to compute linking numbers between extremal orbits. 
The extremal orbits of Lemma~\ref{L:Extremal} have to be replaced by the periodic orbits whose codes are of the form 
$ab^i(ab)^k$ or $ab^i(ab^{q-1})^k$ with $(i,k)\in[\![2,q-2]\!]\times[\![0,\frac{r-3}2]\!]$, or $(ab)^k(ab^{q-1})^l$ for $(k,l)$ in~$[\![1,\frac{r-3}2]\!]\times[\![1,\frac{r-3}2]\!]$.
The technics of Lemma~\ref{L:wijk}, \ref{L:Oppose}, \ref{L:Mixed2} and \ref{L:Mixed} can then be copied to prove that all linking numbers of pairs of extremal orbits are always negative (there are four cases again).
Rather than detailing these proofs here, we refer to our Sage worksheet\footnote{\url{https://www-fourier.ujf-grenoble.fr/~dehornop/maths/ComputationsLinkingTpqr.sws
}} for numerical evidence.
Also, the remark of \S~\ref{S:Proof} is valid here: our computations give a proof for all cases that we have computer-checked, here $q\le10, r\le14$.

\section{Conclusion: the quest of a Gauss linking form}

A classical way of computing linking numbers is \emph{via} a linking form (see \S~\ref{S:LeftHanded}). 
By Ghys Theorem~\cite{GhysLeftHanded} a vector field~$X$ on a 3-manifold~$M$ induces a left-handed flow if and only if there exists a linking form~$\omega_M$ that is everywhere negative along~$X$, \emph{i.e.}, such that $\omega_M(X(x), X(y))<0$ for all $x, y$ in~$M$. 
Theorem~\ref{T:Main} then implies the existence of such a negative linking form on~$\U\Spqr$ for every hyperbolic 3-conic 2-sphere~$\Spqr$. 
It is then natural to wonder whether such a form is canonical or easy-to-construct. 
For example, can it be constructed using propagators~\cite{Lescop}? or by integrating a canonical kernel~\cite{Kuperberg, DTG}?
However, the fact that such a negative Gauss form does not exist on~$\U\Hy/G_{p_1, \dots, p_n}$ for $n\ge 4$ could be discouraging.

Very recently the quest of such an explicit form has been carried out on~$\U\TT^2$ by Adrien Boulanger~\cite{Boulanger}. 
With his formulas it is then obvious that (almost) every null-homologous pair of collections of periodic orbits of the geodesic flow on~$\U\TT^2$ have negative linking number, so that this flow is (almost) left-handed. 
This gives a proof of~\cite[Thm B]{Pierre} that is more natural, as well as some hope to exhibit a natural Gauss form in the higher-genus case.
The \emph{almost} means that actually some linking numbers are zero, so that some peculiar collections do not form fibered links.


\bibliographystyle{siam}

\end{document}